\documentclass[11pt]{amsart}

\usepackage{amscd}
\usepackage{amsmath, amssymb}
\usepackage{amsfonts}

\renewcommand{\leq}{\leqslant}
\renewcommand{\geq}{\geqslant}

\newcommand{\be}{\begin{equation}}
\newcommand{\ee}{\end{equation}}

\begin{document}
\newtheorem{claim}{Claim}
\newtheorem{theorem}{Theorem}[section]
\newtheorem{lemma}[theorem]{Lemma}
\newtheorem{corollary}[theorem]{Corollary}
\newtheorem{proposition}[theorem]{Proposition}
\newtheorem{question}{question}[section]
\newtheorem{definition}[theorem]{Definition}
\newtheorem{remark}[theorem]{Remark}
\newtheorem{example}[theorem]{Example}
\newtheorem{problem}[theorem]{Problem}

\numberwithin{equation}{section}

%\title{Notes on partial uniform ellipticity}
\title[Dirichlet problem for fully non-linear elliptic equations]{Regularity of fully non-linear elliptic equations on Hermitian manifolds. III}

\author{Rirong Yuan} \address{School of Mathematics, South China University of Technology, Guangzhou  510641, China} \email{yuanrr@scut.edu.cn}
\thanks{The author is supported by the National Natural Science Foundation of China (Grant No. 11801587)}

\begin{abstract}
%This article is a sequel to \cite{yuan2017,yuan2019}.
Under structural conditions which are almost %close to 
optimal, we
 derive a quantitative version of boundary estimate then prove existence of solutions to Dirichlet problem for a class of
 fully nonlinear elliptic equations on Hermitian manifolds. 
 %without assumptions to shape of boundary. 

% The assumptions to shape of boundary imposed in  \cite{yuan2017,yuan2019} are removed.
 
\end{abstract}
%Key words: Partial uniform ellipticity; concave function

 \maketitle

%\tableofcontents

\section{Introduction}

Let $(M,J,\omega)$ be a compact Hermitian manifold of complex
dimension $n\geq 2$  with  boundary $\partial M$, $\bar M=M\cup\partial M$,
and $\omega= \sqrt{-1}  g_{i\bar j} dz^i\wedge d\bar z^j$ denote the K\"ahler form being compatible with the complex structure $J$.

%This paper is devoted to solving Dirichlet problem for  fully nonlinear elliptic equations possibly with degenerate right-hand side,
%as the interests from PDEs, complex geometry and mathematical physics.
%including Monge-Amp\`ere equation for $(n-1)$-PSH functions and complex quotient equations.

Let $\Gamma\subset\mathbb{R}^n$ be an open symmetric convex cone containing positive cone
 \[\Gamma_n:=\{\lambda\in \mathbb{R}^n: \mbox{ each component } \lambda_i>0\}\subseteq\Gamma\]
  with vertex at the origin  and with the boundary $\partial \Gamma\neq \emptyset$.
  
  This article is a sequel to \cite{yuan2017,yuan2019}.
 The primary purpose of this paper is to study the following Dirichlet problem for standard equations
 \begin{equation}
\label{mainequ}
\begin{aligned}
 \,& f(\lambda(\mathfrak{g}[u]))= \psi   \mbox{ in } M, \,& u=  \varphi  \mbox{ on }\partial M,
\end{aligned}
\end{equation}
 which are determined by smooth symmetric
functions $f$, defined in $\Gamma$, of eigenvalues of complex Hessians,
where $\mathfrak{g}[u]=\chi+\sqrt{-1}\partial \overline{\partial} u,$
 ${\chi}$ is a smooth real $(1,1)$-form, $\psi$ and $\varphi$ are sufficiently smooth functions. 

 In a pioneer paper \cite{CNS3}, Caffarelli-Nirenberg-Spruck initiated the study of the Dirichlet problem
   of this type on bounded domains of real Euclidean spaces.
   % \begin{equation}  \label{equation-cns} \begin{aligned}
%\,& f(\lambda(D^2 u))=\psi \mbox{ in } \Omega\subset\mathbb{R}^n, \,& u=\varphi \mbox{ on } \partial \Omega
% \end{aligned} \end{equation}
 Since then the equations of this type have been extensively studied in real and complex variables.
 In literature the hypotheses on $f$ %the smooth symmetric function
 include
 \begin{equation}
 \label{elliptic}
 \begin{aligned}
\,& f_i(\lambda):=\frac{\partial f}{\partial \lambda_{i}}(\lambda)> 0  \mbox{ in } \Gamma,\,&  \forall 1\leq i\leq n,
\end{aligned}
\end{equation}
 \begin{equation}\label{concave} \begin{aligned}
 f \mbox{ is  concave in } \Gamma,
 \end{aligned} \end{equation}
   \begin{equation}
\label{addistruc}
\begin{aligned}
%\mbox{For any } \lambda\in\Gamma, \mbox{  } \lim_{t\rightarrow+\infty} f(t\lambda)>-\infty.
\mbox{For any $\sigma<\sup_{\Gamma}f$ and } \lambda\in \Gamma \mbox{  we have } \lim_{t\rightarrow +\infty}f(t\lambda)>\sigma,
\end{aligned}
\end{equation}
 and the unbounded condition
 \begin{equation}\label{unbounded}\begin{aligned}\,& \lim_{t\rightarrow+\infty}f(\lambda_1,\cdots,\lambda_{n-1}, \lambda_n+ t)=
 \sup_\Gamma f, \,& \forall \lambda=(\lambda_1,\cdots,\lambda_n)\in\Gamma.
\end{aligned}\end{equation}
Conditions \eqref{elliptic} and \eqref{concave} coincide respectively the ellipticity and concavity of equation \eqref{mainequ}  for  solutions $u$ in the class of $C^2$-\textit{admissible} functions
 pointwise satisfying  $\lambda(\mathfrak{g}[u])\in \Gamma$.
Also the constant $$\delta_{\psi,f}:= \inf_{M} \psi -\sup_{\partial \Gamma} f, \mbox{ where } \sup_{\partial \Gamma}f =\sup_{\lambda_{0}\in \partial \Gamma } \limsup_{\lambda\rightarrow \lambda_{0}}f(\lambda)$$ is used to measure whether  or not the equation is degenerate.
 More explicitly,  \eqref{mainequ}
 is called non-degenerate if the right-hand side satisfies 
\begin{equation}
 \label{nondegenerate}
  \inf_{M} \psi >\sup_{\partial \Gamma} f, 
  %\mbox{ where } \sup_{\partial \Gamma}f :=\sup_{\lambda_{0}\in \partial \Gamma } \limsup_{\lambda\rightarrow \lambda_{0}}f(\lambda).
 \end{equation}
while it is called degenerate if $\inf_M \psi=\sup_{\partial\Gamma}f$.
 
 The Dirichlet problem was studied by Caffarelli-Kohn-Nirenberg-Spruck \cite{CKNS2} for complex Monge-Amp\`ere
equation ($\chi\equiv0$) on bounded strictly pseudoconvex domains  
$\Omega\subset\mathbb{C}^n$,  
later extended by Guan \cite{Guan1998The} to general bounded domains by replacing %geometric restriction 
  strictly pseudoconvex restriction to boundary  by a subsolution assumption 
  %called \textit{admissible} subsolution 
  satisfying %\eqref{existenceofsubsolution} below.
  \begin{equation}\label{existenceofsubsolution}\begin{aligned} f(\lambda(\mathfrak{g}[\underline{u}])) \geq  \psi,  \mbox{  } \lambda(\mathfrak{g}[\underline{u}])\in\Gamma \mbox{ in } \bar M,  \mbox{  and  }\underline{u}=  \varphi  \mbox{ on } \partial  M. \end{aligned}\end{equation}
The  subsolution is imposed as 
 a vital tool to deal with second order boundary estimates, as done by \cite{Guan1993Boundary,Guan1998The,Hoffman1992Boundary} for Dirichlet problem of Mong-Amp\`ere type equation on bounded domains;
 in addition, subsolution
  has a great advantage in the application to 
% is applicable to 
 certain geometric problems as it relaxes restrictions to the shape of boundary; see e.g. \cite{Chen,GuanP2002The,Guan2009Zhang}. Under the subsolution assumption, Li \cite{LiSY2004} studied Dirichlet problem \eqref{mainequ} for a class of equations %satisfying \eqref{unbounded}
  ($\chi\equiv0$) on bounded domains, which was extended by the author \cite{yuan2021cvpde} to K\"ahler manifolds with nonnegative orthogonal
bisectional curvature.
On general complex manifolds without imposing such curvature assumption, the Dirichlet problem
 has only been solved in rather restricted cases, among others include  complex Monge-Amp\`ere equation  \cite{Boucksom2012,Guan2010Li},   
complex inverse $\sigma_k$ equations \cite{Guan2015Sun}, complex $k$-Hessian equations \cite{Collins2019Picard}, 
and general equations satisfying \eqref{unbounded} and $\Gamma=\Gamma_n$   \cite{yuan2021cvpde}.
%\begin{equation}\label{unbounded}\begin{aligned}\,& \lim_{t\rightarrow+\infty}f(\lambda_1,\cdots,\lambda_{n-1}, \lambda_n+ t)=\sup_\Gamma f, \,& \forall \lambda=(\lambda_1,\cdots,\lambda_n)\in\Gamma.\end{aligned}\end{equation}
However,  little is known for more general fully nonlinear elliptic equations. %except for above mentioned cases.
 The primary obstruction is to prove gradient estimate, which is, however, highly open on general complex manifolds.
 It is pretty hard to prove gradient bound directly, as B{\l}ocki \cite{Blocki09gradient} and Guan-Li \cite{Guan2010Li} did for 
 complex Monge-Amp\`ere equation.
Blow-up argument is an alternative approach to deriving gradient estimate, as shown
%in  \cite{Chen,Phong-Sturm2010,Boucksom2012} for Dirichlet problem of complex Monge-Amp\`ere equation, 
%on $X\times A$ where $A=\mathbb{S}^1\times [0,1]$ and $X$ is a closed K\"ahler manifold, by Phong-Sturm \cite{Phong-Sturm2010} on compact K\"ahler manifolds with holomorphically flat  (real analytic Levi flat) boundary, 
 by Dinew-Ko{\l}odziej for complex $k$-Hessian equations on closed K\"ahler manifolds by combining Liouville type theorem   \cite[Theorem 0.1]{Dinew2017Kolo} with Hou-Ma-Wu's
   estimate \cite[Theorem 1.1]{HouMaWu2010} 
\begin{equation}
\label{sec-estimate-quar1}
\begin{aligned}
\sup_{M}\Delta u\leq C (1+\sup_{M} |\nabla u|^2).
\end{aligned}
\end{equation}
%where %$\Delta = g^{i\bar j}\partial_i\overline{\partial}_{j}$
%$\Delta$ is the standard Laplacian operator.
 %With  the assumption of the existence   of  $\mathcal{C}$-subsolutions,
Recently, Hou-Ma-Wu's estimate and Dinew-Ko{\l}odziej's Liouville type theorem have been extended extensively by Sz\'ekelyhidi \cite[Proposition 13, Theorem 20]{Gabor}  to very general cases.
 %fully non-linear elliptic equations \eqref{mainequ}
    %satisfying \eqref{elliptic}, \eqref{concave} and \eqref{addistruc-0}  
   %  on closed Hermitian manifolds. 
% We are also referred to \cite{STW17,TW17,TW19} and \cite{ZhangDk} for second estimate \eqref{sec-estimate-quar1} of  Monge-Amp\`ere equation for $(n-1)$-PSH functions   and complex $k$-Hessian equations on closed complex manifolds. 
 In an attempt to solve the Dirichlet problem, one needs to prove a quantitative version of second order boundary estimates
  \begin{equation}
  \label{bdy-sec-estimate-quar1}
\begin{aligned}
\sup_{\partial M} \Delta u \leq C(1+\sup_M |\nabla u|^2),
\end{aligned}
\end{equation}
 as shown by Chen \cite{Chen} and complemented by %Phong-Sturm \cite{Phong-Sturm2010} and Boucksom  
 \cite{Boucksom2012,Phong-Sturm2010} 
  for complex Monge-Amp\`ere equation; while their proof relies heavily on the specific structure of Monge-Amp\`ere operator, %which %seems, however, %impossible to work for general equations.
thus it does not adapt to general equations.
 
 The author has made some progress in this direction.
  In \cite{yuan2017} the author  derived quantitative boundary estimate \eqref{bdy-sec-estimate-quar1} for Dirichlet problem \eqref{mainequ} for general equations on Hermitian manifolds with  holomorphically flat boundary, 
%  extending extensively earlier works concerning complex Monge-Amp\`ere equation.
 %The quantitative boundary estimate
 which was further extended by the author \cite{yuan2019} to more general case when  
 the Levi form of $\partial M$, denoted by $L_{\partial M}$, satisfies for any $z\in\partial M$
  \begin{equation}
  \label{bdry-assumption1}
\begin{aligned}
(-\kappa_1,\cdots, -\kappa_{n-1})\in \overline{\Gamma}_\infty,
\end{aligned}
\end{equation}
  where and hereafter $\kappa_1,\cdots, \kappa_{n-1}$ denote the eigenvalues of $L_{\partial M}$ with respect to 
  $\omega'=\omega|_{T_{\partial M}\cap JT_{\partial M}}$, and $\overline{\Gamma}_\infty$ is the closure of $\Gamma_\infty$.
  Hereafter  $$\Gamma_\infty:=\{(\lambda_1,\cdots,\lambda_{n-1}): (\lambda_1,\cdots,\lambda_{n-1}, \lambda_n)\in \Gamma\}$$
  is the projection of $\Gamma$ into $\mathbb{R}^{n-1}$. 
  Such an assumption on shape of boundary is used to compare $\mathfrak{g}_{\alpha\bar\beta}$ with  
  $\underline{\mathfrak{g}}_{\alpha\bar\beta}$ when restricted to boundary, which enables us to apply Lemmas \ref{yuan's-quantitative-lemma} and \ref{asymptoticcone1} to understand the quantitative version of boundary estimate for double normal derivatives. A follow-up work was presented later in \cite{yuan2019PAMQ}, where the author studied equations on K\"ahler cones  from Sasaki geometry; see  \cite{Guan2009Zhang,Qiu2017Yuan} for more references concerning such equations.

The purpose of this paper is to drop the restriction to boundary.
  %by combining  the idea from \cite{yuan2017,yuan2019} with the method of  \cite{CNS3}. 
%with the idea of \cite{yuan2017,yuan2019}.
To compare $\mathfrak{g}_{\alpha\bar\beta}$ with  
  $\underline{\mathfrak{g}}_{\alpha\bar\beta}$ on boundary, we use a method of Caffarelli-Nirenberg-Spruck \cite{CNS3}.
  %  (see Li \cite{LiSY2004} for complex case).
We recall briefly their method of deriving boundary estimate for Dirichlet problem 
\begin{equation}  \label{equation-cns} \begin{aligned}
\,& f(\lambda(D^2 u))=\psi \mbox{ in } \Omega\subset\mathbb{R}^n, \,& u=\varphi \mbox{ on } \partial \Omega. \nonumber
 \end{aligned} \end{equation}
 In order to compare $u_{\alpha\beta}$
 with certain data on boundary,  
they have constructed delicate barrier functions based on a characterization of $\Gamma_\infty$ then dealt with boundary estimates for double normal derivatives in the unbounded case; while their estimate is not quantitative and does not figure out how does it rely on the gradient bound.  %Caffarelli-Nirenberg-Spruck's
We also refer to \cite{LiSY2004} for complex equations in $\mathbb{C}^n$ and 
%The boundary estimate in the bounded case was %simplified and further
%later obtained by Trudinger 
to \cite{Trudinger95} for the boundary estimate in bounded case. %This case will be considered in future work. 

%The main purpose of this paper is to further extend the quantitative boundary estimate to general cases without restriction to boundary.  
Combining the method from  \cite{CNS3} with the idea of \cite{yuan2017,yuan2019},
we prove the quantitative boundary estimate without restriction to boundary. 
 This is new even when $M$ is a  bounded domain in $\mathbb{C}^n$.

\begin{theorem}
\label{thm0-estimate}
 Let $(M,J,\omega)$ be a compact Hermitian manifold  with smooth boundary, $\psi$, $\varphi\in C^\infty$. 
Suppose \eqref{elliptic}, \eqref{concave},  \eqref{addistruc}, \eqref{unbounded}, \eqref{nondegenerate}  and  \eqref{existenceofsubsolution} hold. 
For any admissible solution $u\in C^3(M)\cap C^{2}(\bar M)$ to the Dirichlet problem \eqref{mainequ},
 we have quantitative boundary estimate \eqref{bdy-sec-estimate-quar1}.
\end{theorem}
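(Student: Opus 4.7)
The plan is to work locally at any $z_0 \in \partial M$, choosing local holomorphic coordinates centered at $z_0$ adapted to a smooth defining function $r$ of $\partial M$ (with $\partial/\partial x_n$ the outward normal at $z_0$), and to split the complex Hessian of $u$ at $z_0$ into its tangential-tangential, tangential-normal, and double-normal parts. The tangential-tangential bound is immediate from the boundary condition: differentiating $u - \varphi = 0$ twice along tangent fields on $\partial M$ expresses $u_{\alpha\bar\beta}(z_0)$ for $\alpha,\beta < n$ in terms of the boundary data plus a multiple of the normal derivative $u_r$, giving $|u_{\alpha\bar\beta}(z_0)| \leq C(1 + |\nabla u(z_0)|)$. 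For the tangential-normal piece, I would construct a barrier of the form $\Psi = A(\underline{u} - u) + B r - N|z|^2$ on $B_\rho(z_0) \cap M$ and apply the maximum principle to $\pm T(u - \varphi) + \Psi$, where $T$ is a first-order tangential operator built from boundary vector fields; combined with the subsolution condition \eqref{existenceofsubsolution}, the ellipticity \eqref{elliptic}, and the concavity \eqref{concave}, this should yield $|u_{\alpha \bar n}(z_0)| \leq C(1 + \sup_M |\nabla u|^2)$, as in \cite{yuan2017,yuan2019}.

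The double-normal estimate is the heart of the matter. Without the Levi-form assumption \eqref{bdry-assumption1}, direct comparison of $\mathfrak{g}_{n\bar n}[u]$ with $\underline{\mathfrak{g}}_{n\bar n}$ on $\partial M$ is unavailable, so I would follow the Caffarelli--Nirenberg--Spruck strategy of \cite{CNS3} in a Hermitian, gradient-tracked form. Let $\lambda'(z_0) = (\lambda_1', \dots, \lambda_{n-1}')$ denote the eigenvalues of the tangential $(n-1)\times(n-1)$ block of $\mathfrak{g}[u]$ at $z_0$; by the tangential-tangential estimate, $|\lambda'|\leq C(1 + |\nabla u(z_0)|)$. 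Using concavity \eqref{concave}, the non-degeneracy $f(\lambda[u])= \psi > \sup_{\partial \Gamma} f$, and the admissibility of $\underline{u}$, one argues that $\lambda'(z_0)$ sits quantitatively inside $\Gamma_\infty$, i.e.\ its distance to $\partial \Gamma_\infty$ is bounded below by a controlled positive constant. Invoking Lemma \ref{yuan's-quantitative-lemma} together with the asymptotic-cone description in Lemma \ref{asymptoticcone1} and the unboundedness condition \eqref{unbounded} then forces the missing eigenvalue $\lambda_n[u](z_0)$ to lie in a bounded interval controlled by $\lambda'$, which translates into $u_{n\bar n}(z_0) \leq C(1 + \sup_M |\nabla u|^2)$.

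The hardest step will be the quantitative localization of $\lambda'$ strictly inside $\Gamma_\infty$ without any shape hypothesis on $\partial M$. In \cite{yuan2019}, the hypothesis \eqref{bdry-assumption1} let one compare $\lambda'(u)$ with $\lambda'(\underline{u})$ directly; here the only information about $\lambda'(u)$ comes from the PDE together with \eqref{existenceofsubsolution}, so one must exploit the full strength of \eqref{concave}, \eqref{addistruc}, and \eqref{unbounded} and carefully track how gradient-dependent error terms enter at each step, so that the final estimate on $u_{n\bar n}$ is quadratic rather than higher order in $\sup_M|\nabla u|$. This quantitative bookkeeping, paired with a Hermitian-geometry adaptation of the CNS barrier, is what should allow \eqref{bdy-sec-estimate-quar1} to hold on compact Hermitian manifolds whose boundary is of arbitrary shape.
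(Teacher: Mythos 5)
Your outline has the right skeleton — tangential--tangential control from the boundary condition, tangential--normal control by a barrier and maximum principle (Proposition~\ref{mix-general}), and a double--normal bound via Lemmas~\ref{asymptoticcone1} and \ref{yuan's-quantitative-lemma} together with the unbounded condition \eqref{unbounded}. But the pivotal step — and the genuinely new content of Theorem~\ref{thm0-estimate} — is smoothed over by an assertion that does not hold as stated: you claim that ``using concavity, the non-degeneracy, and the admissibility of $\underline u$, one argues that $\lambda'(z_0)$ sits quantitatively inside $\Gamma_\infty$.'' Without the Levi-form hypothesis \eqref{bdry-assumption1}, the tangential block $\mathfrak g_{\alpha\bar\beta}(z_0)=\underline{\mathfrak g}_{\alpha\bar\beta}(z_0)+\eta\,\sigma_{\alpha\bar\beta}(z_0)$ can have its eigenvalue tuple $\lambda'$ pushed arbitrarily close to $\partial\Gamma_\infty$ as the normal derivative gap $\eta$ varies, precisely because $\sigma_{\alpha\bar\beta}$ (the Levi form) is unconstrained in shape. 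That is the obstruction the theorem is designed to overcome, and it cannot be brushed aside by appealing to \eqref{concave}, \eqref{nondegenerate}, and \eqref{existenceofsubsolution} alone.

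What the paper actually makes quantitative is a different object: one interpolates between the solution's and the subsolution's tangential blocks by setting $A_t=t(\underline u_{\alpha\bar\beta}+\chi_{\alpha\bar\beta})+\eta\sigma_{\alpha\bar\beta}$ and locates the first $t_0<1$ with $\lambda_{\omega'}(A_{t_0})\in\partial\Gamma_\infty$, then proves that $(1-t_0)^{-1}$ is uniformly bounded (Lemma~\ref{keylemma1-yuan3}). This bound is obtained via the full CNS machinery adapted to Hermitian geometry: a supporting hyperplane $\{\sum\mu_\alpha\lambda'_\alpha>0\}$ for $\Gamma_\infty$ at $\tilde\lambda'=\lambda(A_{t_0})$, a Marcus-type eigenvalue inequality (Lemma~\ref{lemma-yuan3-buchong1}), a barrier $w=\underline u+(\eta/t_0)\sigma+l(z)\sigma+Ad^2$ engineered so that $\Lambda_\mu(\mathfrak g[w])\le 0$ in $\Omega_\delta$, the consequent exclusion $\lambda[w]\notin\bar\Gamma^{\inf\psi}$, and finally \cite[Lemma~B]{CNS3} to compare $u$ with $h=w+\epsilon(|z|^2-x_n/C_2)$ and extract a bound on $u_{x_n}(0)$, hence on $t_0$. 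Only then does the decomposition $A(R)=A'(R)+A''(R)$ (with the tangential block of $A'$ equal to $(1-t_0)(\underline{\mathfrak g}_{\alpha\bar\beta}-\tfrac{\varepsilon_0}{4}\delta_{\alpha\beta})$, a controlled dilation of the subsolution data) let Lemma~\ref{yuan's-quantitative-lemma}, combined with \eqref{concavity2} and \eqref{unbounded}, deliver $\mathfrak g_{n\bar n}\le R_c$ with $R_c\lesssim(1-t_0)^{-1}\sum|\mathfrak g_{\alpha\bar n}|^2+\dots$, which together with Proposition~\ref{mix-general} produces the quadratic gradient dependence in \eqref{bdy-sec-estimate-quar1}. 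Your mention of ``a Hermitian-geometry adaptation of the CNS barrier'' gestures at this, but the proposal never identifies $t_0$, the supporting hyperplane $\mu$, the operator $\Lambda_\mu$, the barrier $w$, or the matrix split — all of which are indispensable, not bookkeeping. As written, the middle paragraph therefore has a genuine gap.
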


As a consequence of Theorem \ref{thm0-estimate}, together with the blow-up argument used in \cite{Gabor,Chen}, we completely solve the Dirichlet problem for a large class of fully nonlinear elliptic equations on general Hermitian manifolds.
%which include  the results  from \cite{Collins2019Picard} for $f=(\sigma_k)^{1/k}$ and from \cite{yuan2021cvpde} with $\Gamma=\Gamma_n$  as special cases.
  
\begin{theorem}
\label{thm1-diri}
  Let $(M,J,\omega)$ and $(f,\Gamma)$ be as in Theorem \ref{thm0-estimate}.
  %a compact Hermitian manifold  with smooth boundary.  Suppose \eqref{elliptic}, \eqref{concave}, \eqref{addistruc},  \eqref{unbounded}, \eqref{nondegenerate} and  \eqref{existenceofsubsolution} hold. 
  Then for $\varphi\in C^{\infty}(\partial M)$ and $\psi\in C^{\infty}(\bar M)$ with  \eqref{nondegenerate}, the 
 Dirichlet problem \eqref{mainequ} %with nondegenerate right-hand side satisfying \eqref{nondegenerate}
has a unique smooth admissible solution.  
\end{theorem}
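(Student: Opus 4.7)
The plan is to reduce Theorem \ref{thm1-diri} to the standard continuity method by establishing a full suite of a priori estimates, with Theorem \ref{thm0-estimate} supplying the previously missing quantitative boundary piece. I will deform from the subsolution $\underline{u}$ along a one-parameter family (e.g.\ $\psi_t := t\psi + (1-t)f(\lambda(\mathfrak{g}[\underline{u}]))$ and $\varphi_t := \varphi$, with $t\in[0,1]$): openness at each $t$ follows from the implicit function theorem applied to the linearized operator, whose positive ellipticity and vanishing boundary data make it invertible by the standard maximum principle; closedness is exactly the question of uniform $C^{2,\alpha}$ estimates along the family.

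For the a priori estimates, I would proceed in the following order. First, the $C^0$ bound is immediate from the maximum principle: $\underline{u}\leq u\leq \bar u$, where $\bar u$ is the harmonic extension (or any suitable super-barrier) of $\varphi$. Second, the boundary gradient estimate is standard: tangential derivatives on $\partial M$ are controlled by $\|\varphi\|_{C^1}$, the outer normal derivative is bounded from above by $\partial_\nu\underline{u}$ using $u\geq\underline{u}$ together with $u=\underline{u}=\varphi$ on $\partial M$, and bounded from below by comparison with a linear barrier constructed from $\underline{u}$ and the distance function. Third, combining the boundary estimate $\sup_{\partial M}\Delta u\leq C(1+\sup_M|\nabla u|^2)$ from Theorem \ref{thm0-estimate} with the interior Hou--Ma--Wu estimate in the Hermitian setting, extended by Sz\'ekelyhidi \cite[Proposition 13]{Gabor} to the class of equations under consideration, yields the global inequality
\be
\sup_{\bar M}\Delta u \leq C\bigl(1+\sup_M|\nabla u|^2\bigr). \nonumber
\ee
Fourth, the global gradient bound is produced by the blow-up argument of Dinew--Ko{\l}odziej, Sz\'ekelyhidi \cite[Theorem 20]{Gabor} and Chen \cite{Chen}: if $|\nabla u|$ were unbounded, a point where $|\nabla u|$ attains its supremum (necessarily interior, by the boundary $C^1$ bound) together with the Laplacian bound above allows a rescaling that extracts a nonconstant admissible entire solution on $\mathbb{C}^n$, contradicting the Liouville-type theorem. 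Fifth, once $|\nabla u|$ and $\Delta u$ are bounded, the admissible set lies in a compact subset of $\Gamma$, so the equation is uniformly elliptic and concave on the range; interior Evans--Krylov plus Krylov's boundary $C^{2,\alpha}$ theory (adapted to the Hermitian setting), followed by a Schauder bootstrap, upgrade the solution to $C^\infty(\bar M)$. Uniqueness is a direct consequence of the maximum principle for the concave operator.

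The main obstacle among these steps is the global gradient estimate, which on a general Hermitian manifold is not presently accessible by a direct pointwise maximum-principle computation; one is forced to go through the blow-up/Liouville route, which in turn requires the gradient-squared on the right-hand side of the $\Delta u$ bound to be the \emph{only} source of non-closure. This is precisely why the quantitative form \eqref{bdy-sec-estimate-quar1} of Theorem \ref{thm0-estimate}, rather than a qualitative boundary estimate depending on unspecified higher data, is indispensable: without it the rescaling in the blow-up would not give a limit equation of admissible type and the Liouville theorem could not be applied. All other ingredients -- the $C^0$ and boundary $C^1$ bounds, Evans--Krylov, Schauder, and continuity -- are by now standard in this circle of ideas, so the entire theorem is genuinely a corollary of Theorem \ref{thm0-estimate} combined with the existing machinery of \cite{Gabor,Chen}.
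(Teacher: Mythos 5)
Your proposal is correct and takes essentially the same route the paper intends: the paper itself records only the one-line reduction, stating that Theorem \ref{thm1-diri} follows from the quantitative boundary estimate of Theorem \ref{thm0-estimate} combined with Sz\'ekelyhidi's interior estimate \eqref{quantitative-2nd-boundary-estimate} and the blow-up/Liouville argument of \cite{Gabor,Chen}, which is exactly the chain (global $\Delta u\leq C(1+|\nabla u|^2)$, then gradient bound by blow-up, then Evans--Krylov and Schauder) that you lay out. Your added emphasis that the estimate \eqref{bdy-sec-estimate-quar1} must be quantitative in $\sup_M|\nabla u|^2$ -- rather than a qualitative bound depending on unspecified higher data -- in order for the rescaled limit to be a well-posed admissible equation is precisely the point of the paper.
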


A basic work in K\"ahler geometry is Yau's \cite{Yau78} proof of Calabi's conjecture.
% via solving complex Monge-Amp\`ere equation on closed K\"ahler manifolds. 
Recently, %based on their previous work \cite{Gabor,TW17,TW19}, 
Sz\'ekelyhidi-Tosatti-Weinkove \cite{STW17} solved Gauduchon's conjecture \cite{Gauduchon84} for dimension $n\geq3$, thereby extending the Calabi-Yau theorem to non-K\"ahler geometry. The Gauduchon conjecture is reduced to solving a Monge-Amp\`ere equation for $(n-1)$-plurisubharmonic  ($(n-1)$-PSH for short)
functions
  \begin{equation}
 \label{MA-n-1}
\begin{aligned}
%\omega_0^{n-1}+\sqrt{-1}\partial \overline{\partial}u \wedge \omega^{n-2} + \mathfrak{Re}(\sqrt{-1}\partial u \wedge\overline{\partial}(\omega^{n-2}))>0
\left(\tilde{\chi}+\frac{1}{n-1}\left(\Delta u\omega-\sqrt{-1}\partial \overline{\partial}u\right)+Z\right)^n= e^\phi \omega^n  
%\,&\tilde{\chi}+\frac{1}{n-1}\left(\Delta u\omega-\sqrt{-1}\partial \overline{\partial}u\right)+Z>0
\end{aligned}
\end{equation} 
on a closed $n$-complex dimensional Hermitian manifold $(M,\omega)$, 
where %$\tilde{\chi}=\frac{1}{(n-1)!}* \omega_0^{n-1}$, $Z = \frac{1}{(n-1)!}*\mathfrak{Re}(\sqrt{-1}\partial u\wedge \overline{\partial}(\omega^{n-2}))$,
\[\tilde{\chi}=\frac{1}{(n-1)!}* \omega_0^{n-1}, \mbox{  } Z = \frac{1}{(n-1)!}*\mathfrak{Re}(\sqrt{-1}\partial u\wedge \overline{\partial}(\omega^{n-2})),\]
 here $\omega_0$ is a Gauduchon metric, $*$ is the Hodge star operator with respect to $\omega$; see  \cite{Popovici2015,TW19}. 
   Following \cite{Harvey2012Lawson}, also \cite{TW17}, we call $u$ is $(n-1)$-PSH if 
\begin{equation}
\label{gamma-cone}
\begin{aligned}
\tilde{\chi}+\frac{1}{n-1}\left(\Delta u\omega-\sqrt{-1}\partial \overline{\partial}u\right)+Z>0 \mbox{ in }  M.
\end{aligned}
 \end{equation}
 When $n=2$ the equation is a standard complex Monge-Amp\`ere equation that was solved by Cherrier \cite{Cherrier1987}. %while the equation for $n\geq3$ depends linearly on gradient terms which causes primary difficulty of establishing second   estimate.   
 We refer to \cite{FuWangWuFormtype2010,FuWangWuFormtype2015} for related topics.
 %Form-type Calabi-Yau equation.  
   
    The Dirichlet problem for equation
 \eqref{MA-n-1} possibly with degenerate right-hand side was solved by the author in the second part of \cite{yuan2019}, in which
 the boundary is \textit{mean pseudoconcave} in the sense that 
 \begin{equation}
 \label{mean-pseudoconcave1}
\begin{aligned}
-(\kappa_1+\cdots+\kappa_{n-1})\geq 0 \mbox{ on } \partial M.
\end{aligned}
\end{equation} 
 %which allows us to derive the quantitative boundary estimate \eqref{bdy-sec-estimate-quar1}.
% In \cite{yuan2019}, the proof if quantitative boundary estimate \eqref{bdy-sec-estimate-quar1} requires such mean pseudoconcave assumption on boundary. 
 
 In the second part of this paper, we drop such an assumption and completely solve the Dirichlet problem.
 
 \begin{theorem}
 \label{thm0-n-1-yuan3}
 Let $(M,J,\omega)$ be a compact Hermitian manifold with smooth boundary.  Assume the given data $\varphi$, $\phi$ are smooth.
 Suppose there is a $C^{2,1}$-smooth $(n-1)$-PSH function $\underline{u}$ such that 
  \begin{equation}
  \label{subsolution-MA-n-1}
\begin{aligned}
%\omega_0^{n-1}+\sqrt{-1}\partial \overline{\partial}u \wedge \omega^{n-2} + \mathfrak{Re}(\sqrt{-1}\partial u \wedge\overline{\partial}(\omega^{n-2}))>0
\left(\tilde{\chi}+\frac{1}{n-1}\left(\Delta \underline{u}\omega-\sqrt{-1}\partial \overline{\partial}\underline{u}\right)+\underline{Z}\right)^n\geq e^\phi \omega^n \mbox{ in } M, \mbox{  } \underline{u}=\varphi \mbox{ on } \partial M, %\nonumber
%\,&\tilde{\chi}+\frac{1}{n-1}\left(\Delta u\omega-\sqrt{-1}\partial \overline{\partial}u\right)+Z>0
\end{aligned}
\end{equation} 
where $\underline{Z} = \frac{1}{(n-1)!}*\mathfrak{Re}(\sqrt{-1}\partial \underline{u}\wedge \overline{\partial}(\omega^{n-2}))$. Then %there is a unique smooth $(n-1)$-PSH function $u$   to solve
 the Dirichlet problem for 
 equation \eqref{MA-n-1} with boundary data
\begin{equation}
\label{bdy-value-2}
\begin{aligned}
u=\varphi \mbox{ on } {\partial M}
\end{aligned}
\end{equation}
 is uniquely solvable in class of smooth $(n-1)$-PSH functions.
 \end{theorem}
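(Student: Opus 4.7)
The plan is to cast equation \eqref{MA-n-1} into the abstract framework $f(\lambda(\mathfrak{g}[u]))=\psi$ with $f=\tfrac{1}{n}\log\sigma_n$, cone $\Gamma=\Gamma_n$, and $\psi=\phi/n$, where $\mathfrak{g}[u]$ is the $(1,1)$-form appearing in \eqref{gamma-cone}. The structural conditions \eqref{elliptic}, \eqref{concave}, \eqref{addistruc}, \eqref{unbounded} are standard for $\log\sigma_n$ on $\Gamma_n$, and \eqref{nondegenerate} is automatic since $\sup_{\partial \Gamma_n}f=-\infty$. The subsolution hypothesis \eqref{existenceofsubsolution} is exactly \eqref{subsolution-MA-n-1}. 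The only feature of \eqref{MA-n-1} not covered directly by Theorem \ref{thm1-diri} is that $\mathfrak{g}[u]$ involves the full Laplacian of $u$ and a gradient-dependent term $Z$, but neither changes the shape of the admissible cone.

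With the reduction in place, I would run a continuity method joining $\phi$ to a datum for which $\underline{u}$ itself solves the equation, reducing existence to a priori estimates. The $C^0$ bound follows by comparison with $\underline{u}$. Given a $C^1$ bound, Sz\'ekelyhidi's extension of the Hou-Ma-Wu estimate \eqref{sec-estimate-quar1} controls $\Delta u$ in the interior, while the quantitative boundary estimate \eqref{bdy-sec-estimate-quar1} handles $\Delta u$ on $\partial M$; Evans-Krylov and Schauder then deliver higher regularity. The gradient bound is obtained by the Dinew-Ko\l odziej--Sz\'ekelyhidi blow-up argument: a hypothetical blow-up of $|\nabla u|$ rescales to a nonconstant entire $\Gamma_n$-admissible Monge-Amp\`ere solution on $\mathbb{C}^n$, ruled out by the Liouville theorem of \cite{Dinew2017Kolo,Gabor}; the term $Z$ scales subcritically under the rescaling and drops out of the limit equation.

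The essential new step is the quantitative boundary estimate \eqref{bdy-sec-estimate-quar1} for \eqref{MA-n-1} without the mean pseudoconcavity assumption \eqref{mean-pseudoconcave1}. Tangential-tangential and mixed tangential-normal estimates proceed by classical barrier arguments built from $\underline{u}$. For the double normal derivative I would transplant the Caffarelli-Nirenberg-Spruck barrier construction underlying Theorem \ref{thm0-estimate} to the form-type setting: if $\mu$ denotes the eigenvalues of $\mathfrak{g}[u]$ and $\lambda$ those of $\sqrt{-1}\partial\overline{\partial}u$ with respect to $\omega$, the linear relation $\mu_i=\tilde\chi_i+\tfrac{1}{n-1}\sum_{j\neq i}\lambda_j+Z_i$ converts the $\Gamma_n$ condition on $\mu$ into a shifted cone condition on $\lambda$, and the boundary analysis of Theorem \ref{thm0-estimate} applies after this change of variables, producing a bound on double normal derivatives depending only on $\sup_M|\nabla u|$.

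The main obstacle is this last step. The author's earlier work imposed \eqref{mean-pseudoconcave1} precisely to compare $\mathfrak{g}_{\alpha\bar\beta}$ with $\underline{\mathfrak{g}}_{\alpha\bar\beta}$ on $\partial M$ through the trace of the Levi form; dropping that assumption forces one to keep the principal Levi eigenvalues $\kappa_1,\dots,\kappa_{n-1}$ as free parameters and rerun the Caffarelli-Nirenberg-Spruck type barrier construction in the cone induced on $\lambda$, while ensuring the resulting bound depends quantitatively on $\sup_M|\nabla u|$ alone so that it remains compatible with the subsequent blow-up reduction of the gradient estimate.
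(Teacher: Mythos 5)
Your overall blueprint — recast \eqref{MA-n-1} as an eigenvalue equation with subsolution, run a continuity method, obtain the gradient bound via blow-up after establishing second-order estimates depending quadratically on $\sup|\nabla u|$, and prove the boundary double-normal bound by a Caffarelli-Nirenberg-Spruck barrier without the mean-pseudoconcavity assumption — is the same as the paper's. But there are two concrete issues in the one step that carries the new content, the double-normal boundary estimate (Proposition \ref{proposition-quar-yuan2}).

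First, the eigenvalue relation you write, $\mu_i=\tilde\chi_i+\tfrac{1}{n-1}\sum_{j\neq i}\lambda_j+Z_i$, with $\lambda$ the eigenvalues of $\sqrt{-1}\partial\overline{\partial}u$, is not valid: the $(1,1)$-forms $\tilde\chi$, $\sqrt{-1}\partial\overline{\partial}u$ and $Z$ do not simultaneously diagonalize, so one cannot decompose $\mu_i$ this way. The correct statement is the Tosatti--Weinkove duality applied to the single combined form $\tilde{\mathfrak{g}}_{i\bar j}=u_{i\bar j}+\check\chi_{i\bar j}+W_{i\bar j}$ (see \eqref{yuan3-buchong111}): its eigenvalues $\lambda$ lie in $\mathcal{P}_{n-1}$, and the $\mu$'s are $\mu_i=\sum_{j\neq i}\lambda_j$. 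The paper works with $f=\log P_{n-1}(\lambda)$ on $\mathcal{P}_{n-1}$ rather than with $\log\sigma_n(\mu)$ on $\Gamma_n$; this is not cosmetic, because the associated $\Gamma_\infty$ for $\mathcal{P}_{n-1}$ is a half-space $\{\lambda':\sum\lambda'_\alpha>0\}$, which fixes the supporting functional $\mu_\alpha\equiv 1$ in the CNS barrier and simplifies the construction relative to Section \ref{sec3}.

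Second, and more seriously, "the boundary analysis of Theorem \ref{thm0-estimate} applies after this change of variables" does not go through as stated. The form $\tilde{\mathfrak{g}}[u]$ carries the first-order term $W[u]$ (a linear function of $\nabla u$ arising from the torsion), so the barrier $w$ must satisfy $\Lambda(\tilde{\mathfrak{g}}[w])\leq 0$ where the operator $\Lambda$ acts on $\check\chi+\sqrt{-1}\partial\overline{\partial}w+W[w]$, not on $\chi+\sqrt{-1}\partial\overline{\partial}w$. Expanding $W[w]$ produces additional terms in $\nabla\sigma$, $\nabla(d^2)$, and $l(z)$ that have no analogue in Section \ref{sec3}. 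The paper's Lemma \ref{key-lemma-B2} controls them using the specific cancellation $\sum_\alpha W^n_{\alpha\bar\alpha}(0)=\sum_\alpha W^{\bar n}_{\alpha\bar\alpha}(0)=0$ (equation \eqref{lemma-B4}, which follows from the structure of the torsion tensor in \eqref{Z-tensor1}), and by absorbing new $O(|z|)$ and $O(\sigma)$ errors into the negative $-Aa_2 d(z)/2$ term together with a modified choice of $l_i$ in \eqref{chosen-2}. Without spelling out these computations and the identity \eqref{lemma-B4}, the claim that the abstract machinery transfers is a genuine gap: the gradient-dependent terms are precisely what distinguish the form-type equation from the case covered by Theorem \ref{thm0-estimate}, and they require separate treatment in the barrier.

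The remainder of your proposal (the subsolution giving $C^0$, the Sz\'ekelyhidi--Tosatti--Weinkove interior estimate \eqref{quantitative-2nd-boundary-estimate}, the subcritical scaling of $Z$ in the blow-up argument, Evans--Krylov, uniqueness) is in line with the paper and raises no issue.
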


\vspace{1.5mm} 
The paper is organized as follows. In Section \ref{sec2} we briefly sketch the proof. In Sections \ref{sec3} and \ref{sec4} the  %Propositions  \ref{proposition-quar-yuan1} and \ref{proposition-quar-yuan2}
quantitative boundary estimates for Dirichlet problem \eqref{mainequ} and \eqref{MA-n-1}-\eqref{bdy-value-2} are proved respectively. In Appendix \ref{appendix1} we append two key lemmas which are key ingredients.

 \section{Sketch of proof}
 \label{sec2}

For equations \eqref{mainequ} and \eqref{MA-n-1} the following second order estimate 
\begin{equation}
\label{quantitative-2nd-boundary-estimate}
\begin{aligned}
\sup_{ M} \Delta u
\leq C(1+ \sup_{M}|\nabla u|^{2} +\sup_{\partial M}|\Delta u|)
\end{aligned}
\end{equation}
was proved by Sz\'ekelyhidi \cite[Proposotion 13]{Gabor} and Sz\'ekelyhidi-Tosatti-Weinkove 
   \cite[Section 3]{STW17} respectively.
%To achieve our goal, with the estimate above at hand, a specific problem that we have in mind is to   derive the quantitative boundary estimate \eqref{bdy-sec-estimate-quar1}.

Our goal is to derive the quantitative boundary estimate \eqref{bdy-sec-estimate-quar1}, i.e.,
  \begin{equation}  %\label{bdy-sec-estimate-quar1}
   \begin{aligned}
\sup_{\partial M} \Delta u \leq C(1+\sup_M |\nabla u|^2). \nonumber
\end{aligned}\end{equation}
%This is done in Theorem \ref{mix-general-thm1}.  
In \cite{yuan2019}, 
the author proved quantitative boundary estimate for tangential-normal derivatives.

\begin{proposition}
[\cite{yuan2019}]
\label{mix-general}
  Let $(M,\omega)$ be a compact Hermitian manifold 
 with $C^3$-smooth boundary, $\varphi\in C^3(\partial M)$, $\psi\in C^1(\bar M)$.
Suppose \eqref{elliptic}, \eqref{concave}, \eqref{nondegenerate} and \eqref{existenceofsubsolution} hold.
Then 
for any admissible solution $u\in C^3(M)\cap C^2(\bar M)$ to the Dirichlet problem \eqref{mainequ},
 there is a uniform positive constant $C$ depending on $|\varphi|_{C^{3}(\bar M)}$, 
 $|\underline{u}|_{C^{2}(\bar M)}$,  $|u|_{C^0(M)}$, $|\nabla u|_{C^0(\partial  M)}$,
$\sup_{M}|\nabla\psi|$, $\partial M$ 
up to third derivatives
and other known data (but neither on $(\delta_{\psi,f})^{-1}$ nor on $\sup_{M}|\nabla u|$)
%(but not on  $\sup_{M}|\nabla u|$)
such that
\begin{equation}
\label{quanti-mix-derivative-00} 
|\nabla^2 u(X,\nu)|\leq C(1+\sup_{M}|\nabla u|)
 \end{equation}
 for any $X\in T_{\partial M}$ with $ |X|=1$,   
where  $\nabla^2 u$ denotes the real Hessian of $u$, and $\nu$ denotes the unit inner normal vector along the boundary.
\end{proposition}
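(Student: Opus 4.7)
\textbf{Proof plan for Proposition \ref{mix-general}.}

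Fix a boundary point $p_0 \in \partial M$ and pick local holomorphic coordinates $(z^1,\dots,z^n)$ centered at $p_0$ with $g_{i\bar j}(0)=\delta_{ij}$; after a unitary rotation we may arrange that $\partial M \cap U = \{r=0\}$ with $r = -\mathfrak{Re}(z^n) + O(|z|^2)$ and $r<0$ in $M\cap U$, so that $\nu(p_0)=\partial/\partial x^n$. Set $w := u-\underline{u}$, which vanishes on $\partial M \cap U$. It suffices to prove $|\partial_\nu (Tw)(p_0)| \le C(1+\sup_M|\nabla u|)$ for $T$ ranging over a fixed finite set of real first-order operators spanning $T_{p_0}\partial M$ (i.e.\ $\partial/\partial x^\alpha,\partial/\partial y^\alpha$ for $\alpha<n$ and $\partial/\partial y^n$), each extended to $U$ as a vector field tangent to $\partial M$ via the standard correction $T\mapsto T - (Tr/\partial_\nu r)\,\partial_\nu$. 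Then $Tw\equiv 0$ on $\partial M\cap U$.

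Let $\mathcal L := F^{i\bar j}\partial_i\partial_{\bar j}$ be the linearization at $u$, with $(F^{i\bar j})$ positive definite by \eqref{elliptic}; denote $\mathcal F := \sum F^{i\bar i}$ in a frame diagonalizing $\mathfrak g[u]$. Concavity \eqref{concave} combined with \eqref{existenceofsubsolution} yields the basic sign
\[
\mathcal L w \;=\; F^{i\bar j}\bigl(\mathfrak g[u]_{i\bar j}-\mathfrak g[\underline{u}]_{i\bar j}\bigr) \;\le\; \psi - f(\lambda(\mathfrak g[\underline u])) \;\le\; 0.
\]
Differentiating $F(\mathfrak g[u])=\psi$ by $T$ and subtracting the analogue for $\underline u$ gives, after bounding the commutator $[\mathcal L,T]$ against Christoffel symbols, derivatives of $\chi$, and derivatives of the correction coefficient $Tr/\partial_\nu r$,
\[
|\mathcal L(Tw)| \;\le\; C\bigl(1+\sup_M|\nabla u|\bigr)\bigl(1+\mathcal F\bigr).
\]
In addition $\mathcal L(|z|^2) = \mathcal F$ and $|\mathcal L r| \le C_0\mathcal F$, the last because $r$ is smooth and fixed.

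On $\Omega_\delta := M\cap B_\delta(p_0)$ with $\delta$ small consider the barrier
\[
\Psi \;:=\; A\,w \;-\; B\,r \;+\; C\bigl(1+\sup_M|\nabla u|\bigr)|z|^2 \;\pm\; Tw,
\]
with constants $A,B,C$ of order $1+\sup_M|\nabla u|$ and $B\gg A\gg C$. Using $\mathcal L w \le 0$, the estimates for $\mathcal L r$, $\mathcal L|z|^2$, and $\mathcal L(Tw)$ above, the coefficients can be chosen so that $\mathcal L\Psi \le 0$ in $\Omega_\delta$. On $\partial M\cap B_\delta$ we have $w = Tw = r = 0$, hence $\Psi\ge 0$ there; on $M\cap\partial B_\delta$ the sum $-Br + C(1+\sup_M|\nabla u|)|z|^2$ dominates the remaining linear terms (all controlled by $C(1+\sup_M|\nabla u|)$) for $B$ large of order $1+\sup_M|\nabla u|$. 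The maximum principle yields $\Psi\ge 0$ in $\Omega_\delta$. Since $\Psi(p_0)=0$ and $\partial_\nu r(p_0)<0$, we have $\partial_\nu\Psi(p_0)\ge 0$, from which, after using the assumed bound on $|\nabla u|_{C^0(\partial M)}$ to control $\partial_\nu w(p_0)$, the desired inequality \eqref{quanti-mix-derivative-00} follows.

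\textbf{Main obstacle.} The delicate point is producing the estimate with constant that is at most \emph{linear} in $\sup_M|\nabla u|$ and \emph{independent} of $\delta_{\psi,f}^{-1}$. Linearity in $\sup_M|\nabla u|$ rests on the sharp commutator bound $|[\mathcal L,T]u|\le C\mathcal F(1+\sup_M|\nabla u|)$, which requires tracking every first-derivative contribution of $T$ acting on $\chi$, the metric, and the non-holomorphic correction. Independence of $\delta_{\psi,f}^{-1}$ forbids using the size of $-\mathcal L w \ge \delta_{\psi,f}$ quantitatively; one circumvents this by a Guan-type dichotomy on $\mathcal F$: when $\mathcal F$ is large, the term $C(1+\sup_M|\nabla u|)\mathcal F$ coming from $\mathcal L(|z|^2)$ alone absorbs $\mathcal L(Tw)$ and the $\mathcal L r$ contribution; when $\mathcal F$ is bounded, concavity plus the subsolution forces $\lambda(\mathfrak g[u])$ to lie in a compact subset of $\Gamma$, and ellipticity closes the estimate with constants that do not see $\delta_{\psi,f}^{-1}$.
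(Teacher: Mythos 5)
Your overall strategy --- tangential vector fields $T$ tangent to $\partial M$, the difference $w=u-\underline u$, the linearized operator $\mathcal L$, the sign $\mathcal L w\le 0$, and a barrier argument on $\Omega_\delta$ --- is the right skeleton. But the barrier you write down cannot work, and the ``dichotomy'' you sketch to fix it is not the correct one, so there is a genuine gap.

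The problem is at the step ``the coefficients can be chosen so that $\mathcal L\Psi\le 0$ in $\Omega_\delta$''. Writing out
\[
\mathcal L\Psi \;=\; A\,\mathcal L w \;-\; B\,\mathcal L r \;+\; C(1+\sup_M|\nabla u|)\,\mathcal F \;\pm\; \mathcal L(Tw),
\]
the only term with a determined sign is $A\,\mathcal Lw\le 0$, and that is just a \emph{sign}, not a quantitative negativity: $\mathcal L w$ can be arbitrarily close to $0$ (its magnitude is bounded below only by $\delta_{\psi,f}$, which is exactly what the statement forbids you to use). Meanwhile $-B\,\mathcal L r$ is of size up to $B C_0\mathcal F$ with undetermined sign, $C(1+\sup_M|\nabla u|)\mathcal F$ is \emph{positive}, and $\pm\mathcal L(Tw)$ can contribute $+C''(1+\sup_M|\nabla u|)(1+\mathcal F)$. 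With nothing to absorb the $\mathcal F$-terms, $\mathcal L\Psi\le 0$ fails. Your closing paragraph actually inverts the sign: the term $C(1+\sup_M|\nabla u|)\mathcal F$ coming from $\mathcal L(|z|^2)$ is a \emph{positive} contribution to $\mathcal L\Psi$ and therefore cannot ``absorb'' $\mathcal L(Tw)$ or the $\mathcal L r$ contribution --- it makes the problem worse. (And replacing $+|z|^2$ by $-|z|^2$ destroys $\Psi\ge 0$ on $\partial M\cap B_\delta$.) Your fallback, ``when $\mathcal F$ is bounded, concavity plus the subsolution forces $\lambda(\mathfrak g[u])$ to lie in a compact subset of $\Gamma$'', is also not a valid inference for a general $(f,\Gamma)$ in this class.

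What the argument actually needs is the Guan-type auxiliary function and the accompanying key lemma. One replaces $w$ in the barrier by
\[
v \;=\; (u-\underline u) \;+\; t\,\sigma \;-\; N\,\sigma^2,
\]
where $\sigma$ is the distance to $\partial M$, and invokes the structural lemma (Guan; Sz\'ekelyhidi \cite[Prop.\ 6]{Gabor}) asserting the existence of uniform $\epsilon_0, R_0>0$, depending only on $\mathrm{dist}(\lambda(\underline{\mathfrak g}),\partial\Gamma)$ and background data --- and in particular \emph{not} on $\delta_{\psi,f}^{-1}$ --- such that either $F^{i\bar j}(\underline{\mathfrak g}_{i\bar j}-\mathfrak g_{i\bar j})\ge \epsilon_0(1+\mathcal F)$, or all $F^{i\bar i}\ge\epsilon_0\mathcal F$ and $\mathcal F\ge R_0$ (so the $NF^{i\bar j}\sigma_i\sigma_{\bar j}$ piece of $-N\mathcal L(\sigma^2)$ supplies the negativity). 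Either way one obtains, for appropriate $t, N, \delta$, the crucial inequality $\mathcal L v\le -\epsilon_0(1+\mathcal F)$ in $\Omega_\delta$. With $\Psi = A v + C_1|z|^2 \pm Tw$ and $A$ of order $(1+\sup_M|\nabla u|)/\epsilon_0$ (still linear in $\sup_M|\nabla u|$), the $-A\epsilon_0(1+\mathcal F)$ term dominates all the positive $\mathcal F$ contributions and the maximum principle closes. Without this ingredient --- which is the real content of the estimate --- the argument does not go through. Note also that this Proposition is quoted here from \cite{yuan2019}, so the paper in front of you contains no proof to compare against; the gap you need to fill is precisely the uniform linearized subsolution inequality above.

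A smaller point worth flagging: $[\mathcal L,T]u$ contains terms like $F^{i\bar j}(\partial_k T^l)u_{l\bar j}$, i.e.\ second derivatives of $u$. To get the bound $|\mathcal L(Tw)|\le C(1+\sup_M|\nabla u|)(1+\mathcal F)$ you must first convert these to $F^{i\bar j}\mathfrak g_{k\bar j}$-type expressions and then use concavity to bound $F^{i\bar j}\mathfrak g_{i\bar j}=\sum f_i\lambda_i\le C(1+\mathcal F)$. This is standard but needs to be done explicitly; it is not merely a matter of ``tracking every first-derivative contribution''.
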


\begin{proposition}
[\cite{yuan2019}]
\label{mix-general-2}
 
 Let $(M,\omega)$ be a compact Hermitian manifold with $C^3$ boundary, $\varphi\in C^3(\partial M)$, $\psi\in C^1(\bar M)$. 
Let $u\in C^3(M)\cap C^2(\bar M)$ be a $(n-1)$-PSH function to solve the Dirichlet problem
 \eqref{MA-n-1} with boundary value condition \eqref{bdy-value-2}. Suppose \eqref{subsolution-MA-n-1} holds. 
 Then the estimate \eqref{quanti-mix-derivative-00} 
holds for a uniform positive constant $C$ depending on $|\varphi|_{C^{3}(\bar M)}$, 
 $|\underline{u}|_{C^{2}(\bar M)}$,  $|u|_{C^0(M)}$, $|\nabla u|_{C^0(\partial  M)}$,
$\sup_{M}|\nabla\psi|$, $\partial M$ 
up to third derivatives
and other known data. %(but neither on $(\delta_{\psi,f})^{-1}$ nor on $\sup_{M}|\nabla u|$).
\end{proposition}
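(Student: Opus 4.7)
The plan is to adapt the Caffarelli--Nirenberg--Spruck barrier method to the $(n-1)$-PSH operator, tracking how every constant depends on $\sup_M|\nabla u|$, in parallel with Proposition~\ref{mix-general}. Fix $x_0\in\partial M$ and choose local holomorphic coordinates in which $\partial M$ is the zero set of a defining function $\rho$, with $\rho>0$ in $M$. Writing the equation as $G(\mathfrak{g}[u])=\phi$ with $G=\log\det\mathfrak{g}$, its linearization
\[L\eta = G^{i\bar j}\Bigl(\tfrac{1}{n-1}(\Delta\eta)\,g_{i\bar j}-\tfrac{1}{n-1}\eta_{i\bar j}\Bigr)+(\text{first-order }\eta\text{-terms coming from }Z)\]
has positive definite coefficient matrix $(G^{i\bar j})$, and $\mathcal{G}:=\sum_i G^{i\bar i}\geq c_0>0$ on admissible solutions, thanks to the $C^0$-bound and the form of the equation.

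Let $T$ be a unit tangential vector at $x_0$, extend smoothly to a neighborhood, and set $W=T(u-\varphi)$. Since $u=\varphi$ on $\partial M$, $W$ vanishes identically along the boundary. Differentiating the equation along $T$ and carefully expanding the commutators $[T,\de_i\de_{\bar j}]u$, one obtains a bound of the form $|LW|\leq C(1+\sup_M|\nabla u|)\,\mathcal{G}$, in which the first-order $\nabla u$-dependence of $Z$ keeps the right-hand side linear in $|\nabla u|$ rather than quadratic. On the other hand, the subsolution condition \eqref{subsolution-MA-n-1} combined with concavity of $G$ yields $L(\underline{u}-u)\leq -c_1\mathcal{G}$, and the standard Hermitian computation on the defining function gives $-L(\rho^2)\geq c_2\mathcal{G}-C$ in a neighborhood of $x_0$.

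Build the barrier
\[\Psi = A(\underline{u}-u) + B\rho - N\rho^2\]
on $\Omega_\delta := M\cap B_\delta(x_0)$. On $\partial M\cap B_\delta(x_0)$ we have $\Psi=W=0$. Choose $N$ first so that $-L(\rho^2)$ dominates the error from $L\rho$, then $A$ of order $1+\sup_M|\nabla u|$ so that $AL(\underline{u}-u)$ absorbs $|LW|$, and finally $B$ large so that $\Psi\geq|W|$ on the interior part $M\cap\partial B_\delta(x_0)$. Applying the comparison principle gives $|W|\leq\Psi$ on $\Omega_\delta$, and differentiating in the inner-normal direction at $x_0$ yields
\[|\nabla^2 u(T,\nu)(x_0)|=|\de_\nu W(x_0)|\leq \de_\nu\Psi(x_0)\leq A\,|\de_\nu(\underline{u}-u)(x_0)|+B\leq C\bigl(1+\sup_M|\nabla u|\bigr).\]
The main obstacle is controlling the gradient-valued correction $Z$: differentiating the equation along $T$ generates cross terms of the form $\nabla u\cdot\nabla^2 u$ that, if mishandled, would force a quadratic dependence on $\sup_M|\nabla u|$. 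The fact that $Z$ is only linear in $\de u,\db u$, together with careful bookkeeping of the commutators, allows those terms to be absorbed by taking $A$ linear (not quadratic) in $\sup_M|\nabla u|$; this is the key point that distinguishes the $(n-1)$-PSH setting from the complex Hessian framework of Proposition~\ref{mix-general}.
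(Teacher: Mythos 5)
The paper does not prove Proposition~\ref{mix-general-2} here; it is quoted from \cite{yuan2019}, so there is no in-text proof to compare against. Your barrier construction $\Psi = A(\underline{u}-u) + B\rho - N\rho^2$ with $A$ of order $1+\sup_M|\nabla u|$ is exactly the quantitative CNS/Guan scheme that Propositions~\ref{mix-general} and \ref{mix-general-2} are built on, so the overall framework is the right one.

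That said, the central estimate you assert, $|LW|\leq C(1+\sup_M|\nabla u|)\mathcal{G}$, is not justified by what you write, and the obstacle is not where you locate it. You focus on the gradient-valued correction $Z$ and correctly observe that since $W_{i\bar j}[v]=W^k_{i\bar j}v_k+W^{\bar k}_{i\bar j}v_{\bar k}$ is linear in $\nabla v$, differentiating it along $T$ only produces $O(\sup_M|\nabla u|)\,\mathcal{G}$ error. But the genuinely dangerous terms come from differentiating the second-order part of the operator. Writing the linearized operator against $W=T(u-\varphi)$ produces commutator terms from $T(\Delta u)-\Delta(Tu)$ (differentiating $g^{p\bar q}$, and non-commutativity of covariant derivatives with torsion) and from $T(u_{i\bar j})-(Tu)_{i\bar j}$; these are of size $|\nabla^2 u|$, not $|\nabla u|$. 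Paired with $G^{i\bar j}$ they yield quantities of the schematic form $G^{i\bar j}\, T^{l}_{ki}\, u_{l\bar j}$ and $\bigl(\sum G^{i\bar i}\bigr)g^{p\bar q}(\partial_p T^k)u_{k\bar q}$, and for the $(n-1)$-PSH operator the identity $G^{i\bar j}\mathcal{A}_{l\bar j}=\delta_{il}$ (with $\mathcal{A}=(\mathrm{tr}_\omega\tilde{\mathfrak g})\omega-\tilde{\mathfrak g}$) only trades $G^{i\bar j}u_{l\bar j}$ for $(\Delta u)\,G^{i\bar l}$ plus controlled terms, so these contributions a priori scale like $(\Delta u)\,\mathcal{G}$, not $(1+\sup|\nabla u|)\,\mathcal{G}$. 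If this is simply absorbed by enlarging $A$, the conclusion degrades to $|\nabla^2 u(X,\nu)|\leq C(1+\sup_M|\nabla u|+\sup_M\Delta u)$, which is not the statement and would break the bootstrap with Proposition~13 of \cite{Gabor} / Section~3 of \cite{STW17}. The actual proof must exploit the log--det structure (or the concavity inequality $\sum f_i|\lambda_i|\leq C(1+\mathcal G)$ and its $(n-1)$-MA analogue, together with the already-known pure tangential bound $u_{\alpha\bar\beta}=\underline u_{\alpha\bar\beta}+(u-\underline u)_{x_n}\sigma_{\alpha\bar\beta}$ on $\partial M$) to show that exactly these commutator contributions collapse to $O((1+\sup_M|\nabla u|)\mathcal G)$; this is the real technical content of the cited Proposition, and it is the step your proposal leaves unproved.

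A secondary point: you need to justify $\mathcal G=\sum G^{i\bar i}\geq c_0>0$ and the inequality $L(\underline u-u)\leq -c_1\mathcal G$. For the log--det operator the first follows from the AM--GM inequality and the equation; the second is the standard consequence of concavity plus the strict subsolution \eqref{subsolution-MA-n-1}, but since $\underline u$ is only assumed $C^{2,1}$ and the admissibility gap may degenerate near $\partial M$, this deserves a sentence.
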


To complete the proof of Theorems \ref{thm0-estimate} and \ref{thm0-n-1-yuan3}, as in \cite{yuan2017,yuan2019}, we are guided toward 

\begin{proposition}
\label{proposition-quar-yuan1}
Let $(M, J,\omega)$ be a compact Hermitian manifold with $C^3$ boundary.  
%In addition we assume condition \eqref{bdry-assumption1} is satisfied.
Let $u\in  C^{3}(\bar M)$ be an \textit{admissible} solution
 to Dirichlet problem \eqref{mainequ} with data $\psi\in C^0(\bar M)$ and $\varphi\in C^2(\partial M)$. 
We denote
\begin{equation}
\begin{aligned}
\,& T^{1,0}_{\partial M}=T^{1,0}_{\bar M} \cap T^{\mathbb{C}}_{ \partial M},   
\,& \xi_n=\frac{1}{\sqrt{2}}(\mathrm{{\bf \nu}}-\sqrt{-1}J\nu). \nonumber
\end{aligned}
\end{equation}
Suppose  \eqref{elliptic}, \eqref{concave},  \eqref{addistruc}, \eqref{unbounded}, \eqref{nondegenerate} and \eqref{existenceofsubsolution}
   hold.
Then for   $\xi_\alpha$, $\xi_\beta \in T^{1,0}_{\partial M, x_0}$, $x_0\in \partial M$, $1\leq\alpha,\beta\leq n-1$ satisfying
  $g(\xi_\alpha, \bar\xi_{\beta}) =\delta_{\alpha\beta}$ at $x_0$,
we have
\begin{equation}
\label{yuan-prop1}
\begin{aligned}
 \mathfrak{g}(\xi_n, J\bar \xi_n)(x_0) 
  \leq C\left(1 +  \sum_{\alpha=1}^{n-1} |\mathfrak{g}(\xi_\alpha, J\bar \xi_n)(x_0)|^2\right), 
\end{aligned}
\end{equation}
where $C$ is a uniform positive constant  depending  only on   
$(\delta_{\psi,f})^{-1}$, 
 $|u|_{C^0(M)}$,   $|\nabla u|_{C^0(\partial M)}$, $|\underline{u}|_{C^{2}(\bar M)}$, 
 %$\sup_{\bar M} (\mathrm{disc}(\lambda(\mathfrak{\underline{g}}),\partial\Gamma))^{-1}$, 
  $\partial M$ up to third  derivatives 
 and other known data
 (but not on $\sup_{M}|\nabla u|$).
% In addition, if $\partial M$ satisfies \eqref{bdry-assumption1}  then $C$ depends not on $(\delta_{\psi,f})^{-1}$.
\end{proposition}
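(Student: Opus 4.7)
The plan is to work pointwise at $x_{0}\in \partial M$ in adapted holomorphic coordinates so that $\partial/\partial z^{n}$ is aligned with $\xi_{n}=\frac{1}{\sqrt 2}(\nu-\sqrt{-1}J\nu)$, and to establish the bound by contradiction: assume that $R:=\mathfrak{g}(\xi_{n},J\bar\xi_{n})(x_{0})$ is much larger than $C\bigl(1+\|B\|^{2}\bigr)$, where $B=(\mathfrak{g}_{\alpha\bar n}(x_{0}))_{\alpha<n}$ is the mixed column and $A=(\mathfrak{g}_{\alpha\bar\beta}(x_{0}))_{\alpha,\beta<n}$ is the tangential block of $\mathfrak{g}[u](x_{0})$. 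The tangential block $A$ is uniformly bounded independently of $R$ and of $\sup_{M}|\nabla u|$: since $u=\underline{u}=\varphi$ on $\partial M$, differentiating twice in tangential complex directions gives an identity of the form $A=\underline{A}+(u-\underline{u})_{\nu}(x_{0})\,\widetilde{L}_{\partial M}+O(1)$, and $(u-\underline{u})_{\nu}(x_{0})$ is controlled by $|\nabla u|_{C^{0}(\partial M)}$, which is one of the permitted quantities.

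With $A$ bounded, the Schur complement $\widehat{A}:=A-BB^{*}/R$ of the full Hermitian matrix $\mathfrak{g}[u](x_{0})$ is bounded as soon as $R\gg \|B\|^{2}$. A standard perturbation argument then identifies the $n$ eigenvalues of $\mathfrak{g}[u](x_{0})$ with the $n-1$ eigenvalues $\hat\mu_{1},\ldots,\hat\mu_{n-1}$ of $\widehat{A}$, plus a single large eigenvalue $R+\|B\|^{2}/R+O(R^{-1})$. Since $u$ is admissible, $(\hat\mu_{1},\ldots,\hat\mu_{n-1},R+O(1))\in\Gamma$, so Lemma \ref{asymptoticcone1} forces $(\hat\mu_{1},\ldots,\hat\mu_{n-1})$ into a neighborhood of $\overline{\Gamma}_{\infty}$. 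Invoking the equation $f(\lambda(\mathfrak{g}[u]))=\psi\ge \inf_{M}\psi$ together with nondegeneracy \eqref{nondegenerate}, concavity \eqref{concave} and the unbounded condition \eqref{unbounded}, the quantitative Lemma \ref{yuan's-quantitative-lemma} upgrades this to a $\delta_{\psi,f}$-controlled lower bound on the distance of $(\hat\mu_{1},\ldots,\hat\mu_{n-1})$ from $\partial\Gamma_{\infty}$.

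To close the contradiction I would play the subsolution off against the Schur complement. The block $\underline{A}$ is fixed — it depends only on $\underline{u}$ and on $\partial M$, not on the free parameter $R$ — so the difference $\widehat{A}-\underline{\widehat{A}}$ is controlled by $(u-\underline{u})_{\nu}(x_{0})$ and the second fundamental form of $\partial M$, plus a term of order $\|B\|^{2}/R$. Combining concavity with the strict inequality $f(\lambda(\mathfrak{g}[\underline{u}]))\ge\psi>\sup_{\partial\Gamma}f$ provided by \eqref{existenceofsubsolution} and \eqref{nondegenerate}, the quantitative form of Lemma \ref{yuan's-quantitative-lemma} then produces a strict inequality $R\le C(1+\|B\|^{2})$, contradicting the initial assumption and yielding \eqref{yuan-prop1}.

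The main obstacle — and the reason the boundary-shape hypothesis \eqref{bdry-assumption1} could not be dropped in \cite{yuan2019} — is that when $-\kappa\notin\overline{\Gamma}_{\infty}$ the tangential block $A$ may have eigenvalues pointing outside $\overline{\Gamma}_{\infty}$, so a direct application of the asymptotic-cone/quantitative lemmas to $A$ fails. The CNS-inspired resolution is to feed the Schur complement $\widehat{A}=A-BB^{*}/R$ into Lemmas \ref{asymptoticcone1} and \ref{yuan's-quantitative-lemma}, allowing the mixed term $B$ to absorb the bad directions of $A$. Making this absorption quantitative, with constants independent of $\sup_{M}|\nabla u|$, is the delicate step, and is precisely where the unbounded condition \eqref{unbounded} enters in its strongest form.
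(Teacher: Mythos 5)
Your proposal misidentifies the crucial difficulty and lacks the mechanism that resolves it. The real obstruction is not a perturbation-theoretic one about separating the big eigenvalue $R$ from the rest; it is that on a general boundary the tangential block $A=(\mathfrak{g}_{\alpha\bar\beta}(x_0))$ can lie arbitrarily close to $\partial\Gamma_\infty$. Your claim that the equation $f=\psi$, nondegeneracy, the unbounded condition \eqref{unbounded}, and Lemma \ref{yuan's-quantitative-lemma} together produce ``a $\delta_{\psi,f}$-controlled lower bound on the distance of $(\hat\mu_1,\ldots,\hat\mu_{n-1})$ from $\partial\Gamma_\infty$'' is false: under \eqref{unbounded}, $f(\lambda',\lambda_n)$ can equal $\psi$ with $\lambda'$ arbitrarily close to $\partial\Gamma_\infty$ provided $\lambda_n$ is large, so the equation by itself gives no such lower bound. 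The Schur complement makes this worse, not better, since $\widehat A=A-BB^*/R\preceq A$ pushes the tangential eigenvalues further toward $\partial\Gamma_\infty$ rather than ``absorbing the bad directions.''

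The paper's mechanism is entirely different: it writes $\mathfrak{g}_{\alpha\bar\beta}=(1-t_0)\underline{\mathfrak{g}}_{\alpha\bar\beta}+(A_{t_0})_{\alpha\bar\beta}$ where $t_0$ is chosen so that $\lambda_{\omega'}(A_{t_0})\in\partial\Gamma_\infty$, feeds $A'(R)$ with tangential block $(1-t_0)(\underline{\mathfrak{g}}_{\alpha\bar\beta}-\tfrac{\varepsilon_0}{4}\delta_{\alpha\beta})$ (which is strictly inside $\Gamma_\infty$, with margin proportional to $1-t_0$) into Lemma \ref{yuan's-quantitative-lemma}, and then reduces the whole problem to bounding $(1-t_0)^{-1}$. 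That bound (Lemma \ref{keylemma1-yuan3}) is the heart of the new case and is proved by a Caffarelli--Nirenberg--Spruck style barrier: take the supporting hyperplane $\mu$ of $\Gamma_\infty$ at $\tilde\lambda'=\lambda(A_{t_0})$, build $w(z)=\underline{u}(z)+(\eta/t_0)\sigma(z)+l(z)\sigma(z)+Ad(z)^2$, show via the Marcus inequality that $\Lambda_\mu(\mathfrak{g}[w])\leq 0$ forces $\lambda[w]\notin\Gamma_\infty$, perturb to $h$ so that $\lambda[h]\notin\bar\Gamma^{\inf_M\psi}$, and apply \cite[Lemma B]{CNS3} to get $u\leq h$ in $\Omega_\delta$ and hence a normal-derivative comparison at the origin that yields $(1-t_0)^{-1}\leq 1+\eta C_2/\epsilon$. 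None of this appears in your sketch; your contradiction has no barrier function, no supporting-hyperplane operator $\Lambda_\mu$, and no maximum principle at the boundary. That is a genuine gap, not a stylistic difference: without the $t_0$ decomposition and barrier you cannot rule out $A$ degenerating to $\partial\Gamma_\infty$, which is precisely what made \eqref{bdry-assumption1} necessary in the earlier works.
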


\begin{proposition}
\label{proposition-quar-yuan2}
Let $(M, J,\omega)$ be a compact Hermitian manifold with $C^3$ boundary, let 
$\psi\in C^0(\bar M)$ and $\varphi\in C^2(\partial M)$.  
In addition we assume \eqref{subsolution-MA-n-1} is satisfied.
For any  $(n-1)$-PSH function  $u\in  C^{2}(\bar M)$ solving
 Dirichlet problem \eqref{MA-n-1} and \eqref{bdy-value-2}, 
then there is a uniform positive constant $C$ depending  only on    $(\delta_{\psi,f})^{-1}$, 
 $|u|_{C^0(M)}$,   $|\nabla u|_{C^0(\partial M)}$, 
 %$\sup_{\bar M} (\mathrm{disc}(\lambda(\mathfrak{\underline{\tilde{g}}}),\partial\mathcal{P}_{n-1}))^{-1}$, 
 $|\underline{u}|_{C^{2}(\bar M)}$,  $\partial M$ up to third derivatives 
 and other known data, such that for any $x_0\in \partial M$, 
\begin{equation}
\label{yuan-prop2}
\begin{aligned}
 \mathfrak{\tilde{g}}(\xi_n, J\bar \xi_n)(x_0) 
  \leq C\left(1 +  \sum_{\alpha=1}^{n-1} |\mathfrak{\tilde{g}}(\xi_\alpha, J\bar \xi_n)(x_0)|^2\right).
\end{aligned}
\end{equation}
  Here $ \mathfrak{\tilde{g}}$ is denoted by \eqref{yuan3-buchong111}, and
   $\xi_i$ are as in Proposition \ref{proposition-quar-yuan1}.
% (but not on $\sup_{M}|\nabla u|$).
% In addition, if $\partial M$ satisfies \eqref{bdry-assumption1}  then $C$ depends not on $(\delta_{\psi,f})^{-1}$.
\end{proposition}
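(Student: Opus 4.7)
The proof I propose runs in parallel with Proposition \ref{proposition-quar-yuan1}, but with a decisive simplification coming from the special structure of the $(n-1)$-Monge-Amp\`ere operator. The key observation is that the quantity $\tilde{\mathfrak{g}}(\xi_n, J\bar\xi_n)$ does not contain the double normal complex derivative $u_{n\bar n}$: in holomorphic coordinates centered at $x_0$ with $g_{i\bar j}(x_0) = \delta_{ij}$ and $\partial/\partial z^n$ aligned with $\xi_n$, a direct computation yields
\[
\tilde{\mathfrak{g}}(\xi_n, J\bar\xi_n)(x_0) \;=\; \tilde{\chi}_{n\bar n}(x_0) \;+\; \frac{1}{n-1}\sum_{\alpha=1}^{n-1} u_{\alpha\bar\alpha}(x_0) \;+\; Z_{n\bar n}(x_0),
\]
because the cancellation $(g^{k\bar l} u_{k\bar l})(x_0)\cdot g_{n\bar n}(x_0) - u_{n\bar n}(x_0) = \sum_{\alpha<n} u_{\alpha\bar\alpha}(x_0)$ removes the normal piece from the trace. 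Hence the left-hand side depends on $u$ only through its first derivatives (via $Z$) and through the pure CR-tangential entries $u_{\alpha\bar\alpha}$ for $\alpha<n$.

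The remaining step is classical. Since $u|_{\partial M} = \varphi$ and $\xi_\alpha \in T^{1,0}_{\partial M, x_0}$ for $\alpha<n$, the CR-tangential complex Hessian entries at $x_0$ are determined, modulo a linear contribution of the normal derivative $u_\nu(x_0)$, by the boundary data and the Levi form $L_{\partial M}$: schematically
\[
u_{\alpha\bar\beta}(x_0) \;=\; \varphi_{\alpha\bar\beta}(x_0) \;+\; c_{\alpha\beta}(x_0)\, u_\nu(x_0),
\]
with $c_{\alpha\beta}$ bounded in terms of $L_{\partial M}$ and $\partial M$ up to second derivatives; the correction arises because $\partial/\partial z^\alpha$ is CR-tangential only at the single point $x_0$. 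Together with the trivial bound $|Z_{n\bar n}(x_0)| \le C|\nabla u|_{C^0(\partial M)}$, this yields the pointwise estimate
\[
\tilde{\mathfrak{g}}(\xi_n, J\bar\xi_n)(x_0) \;\le\; C\bigl(1 + |\nabla u|_{C^0(\partial M)}\bigr),
\]
where $C$ depends only on $|\varphi|_{C^2(\partial M)}$, on $\partial M$ up to second derivatives, and on the background forms $\tilde{\chi}$ and $\omega$. This in particular implies \eqref{yuan-prop2}; the squared tangential--normal quantities on the right-hand side of \eqref{yuan-prop2} are not needed in this argument, but the inequality is stated in the same format as \eqref{yuan-prop1} in order to interface uniformly with Proposition \ref{mix-general-2} and \eqref{quantitative-2nd-boundary-estimate} when deriving \eqref{bdy-sec-estimate-quar1} for $\tilde{\mathfrak{g}}$.

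Thus, in contrast with Proposition \ref{proposition-quar-yuan1}, no Caffarelli-Nirenberg-Spruck barrier construction and no use of Lemma \ref{yuan's-quantitative-lemma} will be required here: those tools are precisely designed to control $u_{n\bar n}$, which has dropped out of $\tilde{\mathfrak{g}}(\xi_n, J\bar\xi_n)$ by the cancellation above. The only step that needs careful bookkeeping is the tangential boundary computation of the $u_{\alpha\bar\alpha}(x_0)$; neither the subsolution hypothesis \eqref{subsolution-MA-n-1} nor $(\delta_{\psi,f})^{-1}$ actually enters the pointwise bound, these being invoked only when the present proposition is combined with the other boundary and interior estimates to prove Theorem \ref{thm0-n-1-yuan3}.
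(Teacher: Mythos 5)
Your key claim --- that $\tilde{\mathfrak{g}}(\xi_n, J\bar\xi_n)$ contains no double normal derivative $u_{n\bar n}$ --- rests on a misidentification of the object $\tilde{\mathfrak{g}}$. The cancellation $g^{k\bar l}u_{k\bar l}g_{n\bar n} - u_{n\bar n} = \sum_{\alpha<n}u_{\alpha\bar\alpha}$ that you invoke applies to the \emph{original} $(n-1)$-PSH form $\tilde{\chi}+\tfrac{1}{n-1}(\Delta u\,\omega - \sqrt{-1}\partial\bar\partial u)+Z$. But the proposition is stated, as it explicitly says, for $\tilde{\mathfrak{g}}$ as defined in \eqref{yuan3-buchong111}, namely $\tilde{\mathfrak{g}}_{i\bar j}=u_{i\bar j}+\check{\chi}_{i\bar j}+W_{i\bar j}$, where the trace dual has already been applied ($\check{\chi}$ and $W$ are trace-transforms of $\tilde{\chi}$ and $Z$). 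Under $P_{n-1}$ the original form is recovered as $\tfrac{1}{n-1}[(\mathrm{tr}_\omega\tilde{\mathfrak{g}})\omega - \tilde{\mathfrak{g}}]$, and $\lambda(\tilde{\mathfrak{g}})$ lives in $\mathcal{P}_{n-1}$ rather than $\Gamma_n$. Thus $\tilde{\mathfrak{g}}(\xi_n,J\bar\xi_n)(x_0)=u_{n\bar n}(x_0)+\check{\chi}_{n\bar n}(x_0)+W_{n\bar n}(x_0)$ manifestly contains $u_{n\bar n}$, and your proof reduces the hardest term to nothing by computing a different quantity. Had your estimate $\tilde{\mathfrak{g}}(\xi_n,J\bar\xi_n)(x_0)\le C(1+|\nabla u|_{C^0(\partial M)})$ been correct, it would already be an unconditional bound on $u_{n\bar n}$ at the boundary, i.e.\ the entire double-normal estimate would be free; that cannot be right.

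Even if one rewrites the proposition via the identity $(\mathrm{tr}_\omega\tilde{\mathfrak{g}})\omega - \tilde{\mathfrak{g}}$ to transfer $u_{n\bar n}$ off the $(n,n)$-slot, the difficulty does not vanish but merely relocates: the $(n,n)$-entry of the transformed matrix is $\sum_{\alpha<n}\tilde{\mathfrak{g}}_{\alpha\bar\alpha}$, and one must show this stays \emph{uniformly bounded away from zero}, since $\log P_{n-1}$ blows down as any $\mu_i\to 0^+$. By \eqref{lemma-B5}, $\sum_\alpha\tilde{\mathfrak{g}}_{\alpha\bar\alpha}(0)=\sum_\alpha\underline{\tilde{\mathfrak{g}}}_{\alpha\bar\alpha}(0)+\eta\sum_\alpha\sigma_{\alpha\bar\alpha}(0)$ with $\eta=(u-\underline{u})_{x_n}(0)\ge 0$; when $\sum_\alpha\sigma_{\alpha\bar\alpha}(0)<0$ and $\eta$ is large this tangential trace degenerates. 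Controlling it is exactly the content of Lemma \ref{key-lemma-B1} (the bound on $(1-t_0)^{-1}$), which uses the subsolution, the constant $(\delta_{\psi,f})^{-1}$, and the Caffarelli--Nirenberg--Spruck barrier built in Lemma \ref{key-lemma-B2}; only then does Lemma \ref{yuan's-quantitative-lemma} give the desired quadratic dependence on $\tilde{\mathfrak{g}}_{\alpha\bar n}$. Your assertion that neither the subsolution hypothesis nor $(\delta_{\psi,f})^{-1}$ enters the pointwise bound is therefore not tenable --- the proposition lists both dependencies, and they are genuinely used.
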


%\begin{remark} When $\partial M$ is Levi flat or more generally satisfies \eqref{bdry-assumption1}, Proposition \ref{proposition-quar-yuan1} was proved in earlier works \cite{yuan2017,yuan2019} where  the unbounded condition is not necessary;  in addition, Proposition \ref{proposition-quar-yuan2} was proved in \cite{yuan2019} with assuming \eqref{mean-pseudoconcave1}. A somewhat interesting fact is that the bounds depend  not on $(\delta_{\psi,f})^{-1}$ in such cases and so apply to degenerate equations. %with $\delta_{\psi,f}=0$. 
%\end{remark}

  \section{Proof of Proposition \ref{proposition-quar-yuan1}}
  \label{sec3}
  
  We always assume $\Gamma$ is of type 1 in the sense of \cite{CNS3}, then $\Gamma_\infty$ is an open symmetric convex cone in $\mathbb{R}^{n-1}$;
 otherwise we have done by \cite{yuan2019} as $\Gamma_\infty=\mathbb{R}^{n-1}$ whenever $\Gamma$ is of type 2.
 %Furthermore, we should mentioned that when $\Gamma$ is of type 2 the equation \eqref{mainequ} is in effect of fully uniform ellipticity as shown by partial uniform ellipticity proved in \cite{yuan2020conformal}.
%The $\Gamma_\infty$ is an open convex cone in $\mathbb{R}^{n-1}$.
 
 In this section we always denote $\mathfrak{g}=\mathfrak{g}[u]$, $\underline{\mathfrak{g}}={\mathfrak{g}}[\underline{u}]$, 
$ \mathfrak{g}[v]=\sqrt{-1}\mathfrak{g}[v]_{i\bar j}dz_i\wedge d\bar z_j$, $ \chi=\sqrt{-1}\chi_{i\bar j}dz_i\wedge d\bar z_j$, 
$v_i=\frac{\partial v}{\partial z_i}$, $v_{i\bar j}=\frac{\partial^2 v}{\partial z_i \partial \bar z_j}$, etc. 
And the Greek letters, such as $\alpha, \beta$, range from $1$ to $n-1$.
 We also denote $\sigma(z)$ by the distance function from $z$ to $\partial M$ with respect to $\omega$. 
     
    \subsection{First ingredient of proof}
    %We denote $\Gamma^\sigma=\{\lambda\in \Gamma: f(\lambda)>\sigma\}$,   $\partial\Gamma^\sigma=\{\lambda\in \Gamma: f(\lambda)=\sigma\}$.
  
 %(as stated in Introduction $\partial\Gamma^\sigma=\{\lambda\in \Gamma: f(\lambda)=\sigma\}$), 
  %$Df(\lambda)=(f_1(\lambda),\cdots, f_n(\lambda))$.

  Given $p_0\in\partial M$. We can choose a local holomorphic coordinate systems $(z_1,\cdots,z_n)$, $z_i=x_i+\sqrt{-1}y_i$, centered at $p_0$, such that $g_{i\bar j}(0)=\delta_{ij}$, $\frac{\partial}{\partial x_n}$ is the inner normal vector at the origin, and
   $T^{1,0}_{{p_0},{\partial M}}$ is spanned by $\frac{\partial}{\partial z_\alpha}$ for $1\leq\alpha\leq n-1$.
    Let' denote
 \begin{equation}
A(R)=\left(
\begin{matrix}
\mathfrak{g}_{1\bar 1}&\mathfrak{g}_{1\bar 2}&\cdots &\mathfrak{g}_{1\overline{(n-1)}} &\mathfrak{g}_{1\bar n}\\
\mathfrak{g}_{2\bar 1} &\mathfrak{g}_{2\bar 2}&\cdots& \mathfrak{g}_{2\overline{(n-1)}}&\mathfrak{g}_{2\bar n}\\
\vdots&\vdots&\ddots&\vdots&\vdots \\
\mathfrak{g}_{(n-1)\bar 1}&\mathfrak{g}_{(n-1)\bar 2}& \cdots&  \mathfrak{g}_{{(n-1)}\overline{(n-1)}}& \mathfrak{g}_{(n-1)\bar n}\\
\mathfrak{g}_{n\bar 1}&\mathfrak{g}_{n\bar 2}&\cdots& \mathfrak{g}_{n \overline{(n-1)}}& R  \nonumber
\end{matrix}
\right).
\end{equation}
 Denote by $\lambda'=\lambda_{\omega'}(\mathfrak{g}_{\alpha\bar\beta})$, 
 $\underline{\lambda}'=\lambda_{\omega'}(\underline{\mathfrak{g}}_{\alpha\bar\beta})$. We know
    \begin{equation}
    \label{yuan3-31}
    \begin{aligned}
    \lambda', \mbox{  } \underline{\lambda}' \in \Gamma_\infty.
    \end{aligned}
    \end{equation}
By the openness of $\Gamma$ we know there is a uniform positive constant $\varepsilon_0$ depending only on 
$\inf_{z\in\partial M}|\mathfrak{\underline{g}}(z)|$ such that
\begin{equation}
\label{key-002-yuan3}
\begin{aligned}
(\underline{\lambda}'_1-\varepsilon_{0}, \cdots, \underline{\lambda}'_{n-1}-\varepsilon_0, R_0)\in \Gamma.
\end{aligned}
\end{equation}

%Let $\sigma(z)$ be the distance function from $z$ to $\partial M$ with respect to $\omega$. 
%Under the coordinate we chosen above, \[\sigma=x_n+O(|z|^2).\]
The boundary value condition implies 
  \begin{equation}
  \label{yuan3-buchong5}
    \begin{aligned}
  \,&  u_\alpha(0)=\underline{u}_\alpha(0), \,&
    u_{\alpha\bar\beta}(0)=\underline{u}_{\alpha\bar\beta}(0)+(u-\underline{u})_{x_n}(0)\sigma_{\alpha\bar\beta}(0).
    \end{aligned}
    \end{equation}
    Let $\eta=(u-\underline{u})_{x_n}(0)$, thus at $p_0$ ($z=0$)
     \begin{equation}
    \begin{aligned}
        \mathfrak{g}_{\alpha\bar\beta}=\underline{\mathfrak{g}}_{\alpha\bar\beta}+ \eta\sigma_{\alpha\bar\beta}.
    \end{aligned}
    \end{equation}
    By the maximum principle and boundary value condition,  one derives
    \begin{equation}
     \label{key-14-yuan3}
     \begin{aligned}
   \,& 0\leq \eta\leq (\check{u}-\underline{u})_{x_n}(0),  \,& \underline{u}\leq u\leq \check{u} \mbox{ in } M,
     %C_0 \mbox{ holds for a uniform positive constant } C_0.
     \end{aligned}
\end{equation}
where $\check{u}$ is a supersolution solving
 \begin{equation}
 \label{supersolution-1}
     \begin{aligned}
  \,&   \mathrm{tr}_\omega(\mathfrak{g}[\check{u}])=0 \mbox{ in } M,   \,& \check{u}=\varphi \mbox{ on } \partial M.
 \end{aligned}
\end{equation}
The existence of $\check{u}$ follows from standard theory of elliptic equations of second order.

    When $\eta=0$, one has $u_{\alpha\bar\beta}=\underline{u}_{\alpha\bar\beta}$  and the proof is the same as that from \cite{yuan2017} where $\partial M$ is Levi flat; see also \cite{yuan2019} for the case when $\partial M$ satisfies \eqref{bdry-assumption1}.
    % Moreover, there is $\epsilon_0>0$ depending only on $\underline{\mathfrak{g}}_{\alpha\bar\beta}$ such that if $\eta \leq \epsilon_0$ then we can follow the outline of \cite{yuan2017,yuan2019} to complete the proof, since  \[\mathfrak{g}_{\alpha\bar\beta}\geq \underline{\mathfrak{g}}_{\alpha\bar\beta}-\epsilon_1 \vec{\bf 1}_{n-1}\]    where $\vec{\bf 1}_{n-1}=(1,\cdots,1)\in\mathbb{R}^{n-1}$. 
   
    From now on we assume $\eta>0$.
 %    \begin{equation}    \begin{aligned}
 %   \eta>\epsilon_0>0. 
 %   \end{aligned}  \end{equation}
         We rewrite $u_{\alpha\bar\beta}+\chi_{\alpha\bar\beta}$ as 
    \begin{equation}
    \begin{aligned}
        u_{\alpha\bar\beta}+\chi_{\alpha\bar\beta}= (1-t)(\underline{u}_{\alpha\bar\beta}+\chi_{\alpha\bar\beta})
        +\left(t(\underline{u}_{\alpha\bar\beta}+\chi_{\alpha\bar\beta})+\eta\sigma_{\alpha\bar\beta}\right).
    \end{aligned}
    \end{equation}
For simplicity, as in \cite{CNS3} also as in \cite{LiSY2004},  we denote
     \begin{equation}
    \begin{aligned}
    A_t=\sqrt{-1}[t(\underline{u}_{\alpha\bar\beta}+\chi_{\alpha\bar\beta})+\eta\sigma_{\alpha\bar\beta}]dz_\alpha\wedge d\bar z_\beta.
    \end{aligned}
    \end{equation}
    Clearly, $(A_{1})_{\alpha\bar\beta}={u}_{\alpha\bar\beta}+\chi_{\alpha\bar\beta}$ so  $\lambda_{\omega'}(A_1)\in\Gamma_\infty$. %for any $t>1$ and 
 On the other hand,   $\lambda_{\omega'}(A_t)\in \mathbb{R}^{n-1}\setminus\Gamma_\infty$ for $t\ll -1$.
   % Thus there is $t_0$ satisfying %$\lambda_{\omega'}(A_{t_0})\in\partial\Gamma_\infty$ 
 Let $t_0$ be the first %value of
  $t$ as we decrease $t$ from $1$ %$+\infty$ 
  so that
      \begin{equation}
      \label{key0-yuan3}
    \begin{aligned}
   % \mathfrak{g}_{\alpha\bar\beta}=(1-t_0) \underline{\mathfrak{g}}_{\alpha\bar\beta}+(A_{t_0})_{\alpha\bar\beta} \mbox{ and }
    \lambda_{\omega'}(A_{t_0})\in\partial\Gamma_\infty.
    \end{aligned}
    \end{equation}
  Clearly, for a uniform positive constant $T_0$ under control,
    \begin{equation}
    \label{1-yuan3}
    \begin{aligned}
   -T_0< t_0<1.
    \end{aligned}
    \end{equation}
    We then have the following identity
   \begin{equation}
 {A}(R)=\left(
\begin{matrix}
(1-t_0)(\mathfrak{\underline{g}}_{\alpha\bar \beta}-\frac{\varepsilon_0}{4} \delta_{\alpha\beta}) &\mathfrak{g}_{\alpha\bar n}\\
\mathfrak{g}_{n\bar \beta}& R/2  \nonumber
\end{matrix}
\right) + \left(
\begin{matrix}
(A_{t_0})_{\alpha\bar \beta}+\frac{\varepsilon_0(1-t_0)}{4}  \delta_{\alpha\beta} &0\\
0& R/2  \nonumber
\end{matrix}
\right),
\end{equation}
here $\varepsilon_0$ is the constant from \eqref{key-002-yuan3}.
Let  $A'(R)=\left(
\begin{matrix}
(1-t_0)(\mathfrak{\underline{g}}_{\alpha\bar \beta}-\frac{\varepsilon_0}{4}\delta_{\alpha\beta}) &\mathfrak{g}_{\alpha\bar n}\\
\mathfrak{g}_{n\bar \beta}& R/2  \nonumber
\end{matrix}
\right),
$
and
$A''(R)=\left(
\begin{matrix}
(A_{t_0})_{\alpha\bar \beta}+\frac{(1-t_0)\varepsilon_0}{4} \delta_{\alpha\beta} &0\\
0& R/2  
\end{matrix}
\right).$
 Denote
     \begin{equation}
     \label{yuan3-buchong3}
     \tilde{\lambda}'=(\tilde{\lambda}_1',\cdots,\tilde{\lambda}_{n-1}'):=\lambda_{\omega'}(A_{t_0}).
     \end{equation}
By \eqref{1-yuan3} and \eqref{key-14-yuan3}, there is a uniform constant $C_0>0$ so that  $|\tilde{\lambda}'|\leq C_0$, that is
 $\tilde{\lambda'}$ is contained a compact subset of $\partial\Gamma_\infty$, i.e.  
 \begin{equation}
 \label{yuan3-buchong4}
\begin{aligned}
\tilde{\lambda}'\in {K}':=\{\lambda'\in\partial\Gamma_\infty: |\lambda'|\leq C_0\}.
\end{aligned}
\end{equation}
   So there is a 
    uniform positive constant $R'$ possibly depending on $((1-t_0)\varepsilon_0)^{-1}$ 
    such that for any $R>R'$,
    $$\lambda(A''(R))\in\Gamma.$$
    
    By the unbounded condition \eqref{unbounded}
    there is a uniform positive constant $R_1$ depending on $(1-t_0)^{-1}$, %$(\delta_{\psi,f})^{-1}$, 
    $\varepsilon_0$ and $\underline{\lambda}'$
    such that
     \begin{equation}
     \label{key-03-yuan3}
\begin{aligned}
f((1-t_0)(\underline{\lambda}'_{1}-\frac{\varepsilon_0}{2}),\cdots, (1-t_0)(\underline{\lambda}'_{n-1}
-\frac{\varepsilon_0}{2}), R_1)>f(\lambda(\underline{\mathfrak{g}})).
\end{aligned}
\end{equation}
Here is the only place where we use the unbounded condition \eqref{unbounded}. As a contrast,  such a unbounded condition  can be removed when $t_0\leq0$ occurs.
    
Let's pick  $\epsilon=\frac{(1-t_0)\varepsilon_0}{4}$ in 
 Lemma  \ref{yuan's-quantitative-lemma}, and we set
\begin{equation}
\begin{aligned}
 \frac{R_c}{2}=\,& \frac{4(2n-3)}{(1-t_0)\varepsilon_0}
\sum_{\alpha=1}^{n-1} | \mathfrak{g}_{\alpha\bar n}|^2
 + (n-1)\sum_{\alpha=1}^{n-1}  | \mathfrak{\underline{g}}|
 +R_0+R_1+R'
 \\
 \,&+\frac{(n-1)(1-t_0)\varepsilon_0}{4}
    +\frac{(n-2)(1-t_0)\varepsilon_0}{4(2n-3)}, \nonumber
\end{aligned}
\end{equation}
where $\varepsilon_0$, $R_0$, $R_1$ and $R'$ are fixed constants we have chosen above.
It follows from Lemma   \ref{yuan's-quantitative-lemma} 
   that the eigenvalues $\lambda({A}'(R_c))$ of ${A}'(R_c)$
   (possibly with an order) shall behavior like
\begin{equation}
\label{lemma12-yuan}
\begin{aligned}
\,& \lambda_{\alpha}({A}'(R_c)\geq (1-t_0)(\underline{\lambda}'_{\alpha}-\frac{\varepsilon_0}{2}), \mbox{  } 1\leq \alpha\leq n-1, \\\,&
\lambda_{n}({A}'(R_c)\geq R_c/2-(n-1)(1-t_0)\varepsilon_0/4.
\end{aligned}
\end{equation}
In particular, $\lambda({A}'(R_c))\in \Gamma$. So $\lambda(A(R_c))\in \Gamma$.
 %  where  $\vec{\bf 1}_{n-1}=(1,\cdots,1)\in \mathbb{R}^{n-1}$ is the projection of $\vec{\bf1}$ in $\mathbb{R}^{n}.$
Note that %\eqref{key-01-yuan3} and
 \eqref{concavity2} implies 
\begin{equation}
\label{yuan-k1}
    \begin{aligned}
    f(\lambda(A(R_c)))\geq f(\lambda(A'(R_c))). 
    \end{aligned}
    \end{equation}
   It follows from \eqref{key-03-yuan3}, \eqref{lemma12-yuan},   \eqref{yuan-k1} that
    \begin{equation}
     \label{yuan-k2}
\begin{aligned}
\mathfrak{g}_{n\bar n} \leq R_c.  
\end{aligned}
\end{equation}
%which is the requirement.

\subsection{Second ingredient of proof}

    To complete the proof of Proposition \ref{proposition-quar-yuan1}, from \eqref{yuan-k2}, it requires only to prove $(1-t_0)^{-1}$ can be uniformly bounded from above. In other words
    \begin{equation}
    \label{bound-t0}
    \begin{aligned}
   (1-t_0)^{-1}\leq C. %\mbox{ for some uniform positive constant } \iota_0.
    \end{aligned}
    \end{equation}
    
      In the case when $\partial  M$ is holomorphically flat or more generally \eqref{bdry-assumption1}, one can check $t_0\leq 0$ as shown in \cite{yuan2017,yuan2019} previously  where \eqref{unbounded} is not necessary.  For general case without restriction to boundary, we prove
    \begin{lemma}
    \label{keylemma1-yuan3}
    Let $t_0$ be as defined above, then the inequality \eqref{bound-t0} holds for a uniform positive constant $C$ depending on 
    $|u|_{C^0(M)}$, $|\nabla u|_{C^0(\partial M)}$, $|\underline{u}|_{C^2(M)}$, 
    %$\sup_{\bar M} (\mathrm{disc}(\lambda(\mathfrak{\underline{g}}),\partial\Gamma))^{-1}$,  
    $(\delta_{\psi,f})^{-1}$, $\partial M$ up to third derivatives and other known data.
      
\end{lemma}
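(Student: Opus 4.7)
My approach is proof by contradiction: suppose that $(1-t_0)^{-1}$ admits no uniform bound, so along a sequence of admissible solutions, $1-t_0 \to 0^+$. From the identity $\mathfrak{g}_{\alpha\bar\beta}(p_0) = (A_{t_0})_{\alpha\bar\beta} + (1-t_0)\underline{\mathfrak{g}}_{\alpha\bar\beta}(p_0)$, Weyl's eigenvalue perturbation inequality, and $|\underline{\mathfrak{g}}|_{C^0(\bar M)} \le C$, one obtains $\max_\alpha|\lambda'_\alpha - \tilde{\lambda}'_\alpha| \le C(1-t_0)$. Since $\tilde{\lambda}' \in K' \subset \partial\Gamma_\infty$ by \eqref{yuan3-buchong4}, this forces $\mathrm{dist}(\lambda', \partial\Gamma_\infty) \to 0$ along the sequence.

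To contradict $f(\lambda(\mathfrak{g}(p_0))) = \psi(p_0) \ge \inf_M\psi > \sup_{\partial\Gamma} f$, I would transfer this closeness into an upper bound on the full $f$-value. Rotating the first $n-1$ holomorphic coordinates to diagonalize the principal $(n-1)$-block $\mathfrak{g}_{\alpha\bar\beta}(p_0)$, the diagonal of $\mathfrak{g}(p_0)$ in this frame is $(\lambda'_1,\ldots,\lambda'_{n-1},\mathfrak{g}_{n\bar n})$. Schur-concavity of the symmetric concave $f$ on the symmetric convex $\Gamma$ (via Schur--Horn majorization) then yields
\begin{equation}
f(\lambda(\mathfrak{g}(p_0))) \le f(\lambda'_1,\ldots,\lambda'_{n-1},\mathfrak{g}_{n\bar n}). \nonumber
\end{equation}
From the previous subsection, $\mathfrak{g}_{n\bar n}(p_0) \le R_c$ with $R_c = O((1-t_0)^{-1})$, since $|\mathfrak{g}_{\alpha\bar n}(p_0)|$ is a priori bounded by Proposition \ref{mix-general} through $|\nabla u|_{C^0(\partial M)}$.

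Now Lemma \ref{asymptoticcone1} from the appendix provides the quantitative asymptotic cone degeneracy of $f$: uniformly for $\lambda'$ within distance $O(1-t_0)$ of $K'$ and for $R$ in the admissible range $R \le R_c$, one has $f(\lambda', R) \le \sup_{\partial\Gamma} f + o(1)$ as $1-t_0 \to 0$. Combining with the Schur--Horn step,
\begin{equation}
\psi(p_0) \le \sup_{\partial\Gamma} f + o(1), \nonumber
\end{equation}
which for $1-t_0$ sufficiently small contradicts $\inf_M\psi > \sup_{\partial\Gamma} f$, completing the proof of \eqref{bound-t0}.

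The main obstacle is the simultaneous uniformity required in Lemma \ref{asymptoticcone1}: the $o(1)$ must hold as $\lambda' \to \partial\Gamma_\infty$ \emph{and} as $R$ is allowed to grow like $1/(1-t_0)$, i.e., one must rule out the scenario where the $f$-level curves escape to infinity at precisely the rate permitted by $R_c$. The $\varepsilon_0$-margin from the strict subsolution inequality \eqref{key-002-yuan3}, together with concavity and strict monotonicity $f_i > 0$, are what provide the uniform room in $\Gamma$ needed to close this estimate.
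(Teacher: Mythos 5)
Your contradiction strategy departs entirely from the paper's argument (the paper constructs a viscosity barrier $w$ on a half--ball $\Omega_\delta$, shows $\Lambda_\mu(\mathfrak{g}[w])\leq 0$, perturbs to $h=w+\epsilon(|z|^2-x_n/C_2)$ with $\lambda[h]\notin\bar\Gamma^{\inf_M\psi}$, and invokes the comparison principle of CNS to compare normal derivatives at $p_0$). Your decomposition $\mathfrak{g}_{\alpha\bar\beta}=(A_{t_0})_{\alpha\bar\beta}+(1-t_0)\underline{\mathfrak{g}}_{\alpha\bar\beta}$ and the Schur--Horn majorization step are both correct, but the proof does not close.

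The central gap is the claim that $f(\lambda',R)\leq \sup_{\partial\Gamma}f+o(1)$ as $\lambda'\to\partial\Gamma_\infty$, \emph{uniformly} for $R\leq R_c=O((1-t_0)^{-1})$. This is false under the stated hypotheses; the unbounded condition \eqref{unbounded} says exactly the opposite. The margin $\mu\cdot\lambda'$ from the supporting hyperplane is of order $(1-t_0)$, while $R_c$ grows like $(1-t_0)^{-1}$, so the product is $O(1)$ and the $f$-value can remain pinned at $\psi(p_0)>\sup_{\partial\Gamma}f$ -- that is precisely what the equation forces. Concretely, with $n=3$, $\Gamma=\Gamma_2$, $f=\sqrt{\sigma_2}$, $\lambda'=(-1+\epsilon,1)$ and $R=2/\epsilon$, one gets $\sigma_2=1+\epsilon$, bounded away from $\sup_{\partial\Gamma}f=0$ as $\epsilon\to 0$; so no contradiction arises. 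Moreover, Lemma \ref{asymptoticcone1} carries no such quantitative content: it is an equivalence between \eqref{addistruc} and the positivity $\sum_i f_i(\lambda)\mu_i>0$, not an asymptotic estimate of $f$ near $\partial\Gamma$. A secondary issue: Proposition \ref{mix-general} bounds $|\mathfrak{g}_{\alpha\bar n}(p_0)|$ by $C(1+\sup_M|\nabla u|)$ (the \emph{global} gradient), not by a constant depending only on $|\nabla u|_{C^0(\partial M)}$, so $R_c$ cannot be treated as $O((1-t_0)^{-1})$ with a constant of the allowed dependence -- yet the Lemma requires $(1-t_0)^{-1}\leq C$ with $C$ \emph{independent} of $\sup_M|\nabla u|$. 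The paper avoids all of this by never attempting a pointwise asymptotic of $f$: it instead shows $\lambda[w]$ (hence $\lambda[h]$) lies outside $\bar\Gamma^{\inf_M\psi}$ throughout $\Omega_\delta$, which is a qualitative statement insensitive to how large the missing normal eigenvalue might be.
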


%The proof follows the ideas from \cite{CNS3}. 
We follow some idea of Caffarelli-Nirenberg-Spruck \cite{CNS3}, further extended by Li \cite{LiSY2004} 
%and Collins-Picard \cite{Collins2019Picard} 
to equations  in $\mathbb{C}^n$,
 and use some notation  of \cite{CNS3,LiSY2004}. 
  
   Without loss of generality $t_0>\frac{1}{2}$ and $\tilde{\lambda}_1'\leq \cdots \leq \tilde{\lambda}_{n-1}'$ (as denoted in \eqref{yuan3-buchong3}).
    It was proved in \cite[Lemma 6.1]{CNS3} that for $\tilde{\lambda}'\in\partial\Gamma_\infty$ there is a supporting plane for
     $\Gamma_\infty$ and one can choose $\mu_j$ with 
   $\mu_1\geq \cdots\geq \mu_{n-1}\geq0$ so that
    \begin{equation}
    \label{key-18-yuan3}
    \begin{aligned}
    \Gamma_\infty\subset \left\{\lambda'\in\mathbb{R}^{n-1}: \sum_{\alpha=1}^{n-1}\mu_\alpha\lambda'_\alpha>0 \right\}, \mbox{  }
  \mbox{  } \sum_{\alpha=1}^{n-1} \mu_\alpha=1, \mbox{  } \sum_{\alpha=1}^{n-1}\mu_\alpha \tilde{\lambda}_\alpha'=0.
    \end{aligned}
    \end{equation} 
  By a result of \cite{Marcus1956} (see also  \cite[Lemma 6.2]{CNS3}) (assume $\underline{\lambda}_1'\leq \cdots\leq\underline{\lambda}_{n-1}'$)
   \begin{equation}
    \begin{aligned}
    \sum_{\alpha=1}^{n-1} \mu_\alpha \underline{\mathfrak{g}}_{\alpha\bar\alpha}\geq \sum_{\alpha=1}^{n-1}\mu_\alpha \underline{\lambda}'_\alpha\geq \inf_{p\in\partial M}\sum_{\alpha=1}^{n-1} \mu_\alpha \underline{\lambda}'_\alpha(p)\geq a_0>0.
    \end{aligned}
    \end{equation}
    Here we use \eqref{yuan3-31}, \eqref{yuan3-buchong4} and \eqref{key-18-yuan3}. We shall mention that $a_0$ depends on $\mathrm{disc}(\lambda(\mathfrak{\underline{g}}),\partial \Gamma)$. 
   Without loss of generality,  we assume $({A_{t_0}})_{\alpha\bar\beta}=t_0\underline{\mathfrak{g}}_{\alpha\bar\beta}+\eta\sigma_{\alpha\bar\beta}$ is diagonal at $p_0$. From \eqref{key-18-yuan3} one has at the origin
     \begin{equation}
    \begin{aligned}
  0=  t_0 \sum_{\alpha=1}^{n-1}\mu_\alpha\underline{\mathfrak{g}}_{\alpha\bar\alpha}+\eta\sum_{\alpha=1}^{n-1}\mu_\alpha  {\sigma}_{\alpha\bar\alpha} 
  \geq a_0t_0 +\eta \sum_{\alpha=1}^{n-1} \mu_\alpha \sigma_{\alpha\bar\alpha}.
    \end{aligned}
    \end{equation}
 Together with \eqref{key-14-yuan3}, we see at the origin $\{z=0\}$
    \begin{equation}
    \label{key-1-yuan3}
    \begin{aligned}
    -\sum_{\alpha=1}^{n-1} \mu_\alpha \sigma_{\alpha\bar\alpha}\geq \frac{a_0 t_0}{\sup_{\partial M}|\nabla (\check{u}-\underline{u})|}=:a_1>0,
    \end{aligned}
    \end{equation}
    where $\check{u}$ and $\underline{u}$ are respectively supersolution and subsolution.
         Let $$\Omega_\delta=M\cap B_{\delta}(0),$$
where $B_\delta(0)=\{z\in M: |z|<\delta\}$.
% is a geodesic ball of radius $\delta$ with centered at origin $p_0\in\partial M$ $\{z=0\}$. 
On $\Omega_\delta$, we let
\begin{equation}
    \begin{aligned}
    d(z)=\sigma(z)+\tau |z|^2
    \end{aligned}
    \end{equation}
    where $\tau$ is a positive constant %with $\tau<a_1/4$ and
     to be determined; and let 
    \begin{equation}
    \begin{aligned}
    w(z)=\underline{u}(z)+({\eta}/{t_0})\sigma(z)+l(z)\sigma(z)+Ad(z)^2,
    \end{aligned}
    \end{equation}
    where $l(z)=\sum_{i=1}^n (l_iz_i+\bar l_{i}\bar z_{i})$, %is a real-valued linear function
    $l_i\in \mathbb{C}$, $\bar l_i=l_{\bar i}$,
     to be chosen as in \eqref{chosen-1}, and $A$ is a positive constant to be determined.
       Furthermore, on $\partial M\cap\bar\Omega_\delta$, $u(z)-w(z)=-A\tau^2|z|^4$.
%\begin{equation}   \begin{aligned}
 %   u(z)-w(z)=-A\tau^2|z|^4,
 %   \end{aligned}    \end{equation}
    On $M\cap\partial B_{\delta}(0)$,
    \begin{equation}
    \begin{aligned}
    u(z)-w(z)=\,& u(z)-\underline{u}(z)-({\eta}/{t_0})\sigma(z)-l(z)\sigma(z)-Ad(z)^2 \\
    \leq \,& |u-\underline{u}|_{C^0(\Omega_\delta)} 
    -(2A\tau \delta^2+\frac{\eta}{t_0}-2n \sup_{i}|l_i| \delta)\sigma(z)-A\tau^2\delta^4 \\
    \leq \,& -\frac{A\tau^2 \delta^4}{2} \nonumber
    \end{aligned}
    \end{equation}
 provided $A\gg1$ depending on $|u|_{C^0(M)}$.

Let $T_1(z),\cdots, T_{n-1}(z)$ be an orthonormal basis for holomorphic tangent space  of level hypersurface
 $\{w: d(w)=d(z)\}$ at $z$, so that at the origin 
 $T_\alpha(0)= \frac{\partial }{\partial z_\alpha}$  for each $1\leq\alpha\leq n-1$.
 %$T_\alpha=\sum_{k=1}^nT_\alpha^k\frac{\partial }{\partial z_k}$ are at least $C^1$ and $T_\alpha^k(0)=\delta_{\alpha k}$.  

  Such a basis exists: %As denoted in Proposition \ref{mix-general}, $\nu$ is the unit inner normal vector along the boundary. 
  We see at the origin  $\partial d(0)=\partial \sigma(0)$.  
% $\nabla d(0)=\nabla \sigma(0).$
  Thus for $1\leq \alpha\leq n-1$, we can choose $T_\alpha$ such that at the  origin
  $T_\alpha(0)= \frac{\partial }{\partial z_\alpha}$.

The result of \cite{Marcus1956} (see also  \cite[Lemma 6.2]{CNS3}) implies the following lemma.
\begin{lemma}
\label{lemma-yuan3-buchong1}
Let $T_1(z),\cdots, T_{n-1}(z)$ be as above, and  let  $T_n=\frac{\partial d}{|\partial d|}$.
 %(i.e. $T_i(z)\in T_{z}^{1,0}\bar M$ such that $\omega(T_i, J\bar T_{j})=\delta_{ij}$).
For a real $(1,1)$-form $\Theta=\sqrt{-1}\Theta_{i\bar j}dz_i\wedge d\bar z_j$,
we denote by $\lambda(\Theta)=(\lambda_1(\Theta),\cdots,\lambda_n(\Theta))$  the eigenvalues of $\Theta$ (with respect to $\omega$) with order $\lambda_1(\Theta)\leq \cdots\leq \lambda_n(\Theta)$. Then for any %$\mu_1,\cdots,\mu_n$ with 
$\mu_1\geq\cdots\geq\mu_n$,
\[\sum_{i=1}^n \mu_i \lambda_{i}(\Theta)\leq \sum_{i=1}^n\mu_i\Theta(T_i,J\bar T_i).\]
\end{lemma}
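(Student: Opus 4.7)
My plan is to reduce the assertion to a linear-algebraic inequality on Hermitian matrices, applied to the matrix representing $\Theta$ in the unitary frame $(T_1,\ldots,T_n)$. Because the $T_i$ are $\omega$-orthonormal, the eigenvalues $\lambda_1(\Theta)\leq\cdots\leq\lambda_n(\Theta)$ are exactly those of the Hermitian matrix $H=(H_{ij})$ with $H_{ij}=\Theta(T_i,J\bar T_j)$, and its diagonal entries are $d_i:=\Theta(T_i,J\bar T_i)$ (a one-line check using $J\bar T_i=-\sqrt{-1}\,\bar T_i$ shows $\Theta(T_i,J\bar T_i)=\Theta_{i\bar i}$ in any local holomorphic expression). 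So the statement becomes the purely combinatorial claim that
\[\sum_{i=1}^n \mu_i\lambda_i \leq \sum_{i=1}^n \mu_i d_i\]
for any decreasing sequence $\mu_1\geq\cdots\geq\mu_n$.

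The first step is the Ky Fan minimum principle: for each $k$ one has $\sum_{i=1}^k\lambda_i=\min_{\dim V=k}\mathrm{tr}(H|_V)$, and testing against the coordinate $k$-plane spanned by those $T_i$ corresponding to the $k$ smallest diagonal entries yields
\[\sum_{i=1}^k\lambda_i \leq \sum_{i=1}^k d_i^\uparrow,\]
where $d_1^\uparrow\leq\cdots\leq d_n^\uparrow$ is the increasing rearrangement of $(d_i)$; equality holds at $k=n$ since both sides equal $\mathrm{tr}(H)$. The second step is Abel summation using the nonnegative increments $\mu_k-\mu_{k+1}$:
\[\sum_{i=1}^n\mu_i\lambda_i = \mu_n\,\mathrm{tr}(H) + \sum_{k=1}^{n-1}(\mu_k-\mu_{k+1})\sum_{i=1}^k\lambda_i \leq \mu_n\,\mathrm{tr}(H) + \sum_{k=1}^{n-1}(\mu_k-\mu_{k+1})\sum_{i=1}^k d_i^\uparrow = \sum_{i=1}^n\mu_i d_i^\uparrow.\]
The third step is the classical rearrangement inequality: $\sum_i\mu_i d_i^\uparrow\leq\sum_i\mu_i d_i$, since pairing a decreasing sequence with the increasing rearrangement of another minimizes the inner product over all orderings of the latter.

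There is no real obstacle here. The lemma is essentially the Marcus 1956 / Caffarelli--Nirenberg--Spruck Lemma 6.2 inequality already cited as a tool in the excerpt; the only content beyond those references is to identify $d_i=\Theta(T_i,J\bar T_i)$ with the diagonal of the Hermitian matrix whose eigenvalues are the $\lambda_i(\Theta)$, after which the Ky Fan + Abel + rearrangement chain above is routine. The flexibility in the choice of $\mu$ is precisely what is exploited later in the paper, where $\mu$ is taken to be the coefficients of the supporting hyperplane to $\Gamma_\infty$ produced in \eqref{key-18-yuan3}.
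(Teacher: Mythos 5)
Your proof is correct. The paper itself gives no proof of this lemma; it simply cites Marcus (1956) and \cite[Lemma 6.2]{CNS3}, both of which state the general-position form of exactly the inequality you derive. What you have supplied is the standard from-scratch proof of that cited fact, and each step holds: the identification $\Theta(T_i,J\bar T_i)=\Theta_{k\bar l}T_i^k\bar T_i^l$ is correct, and since $(T_1,\ldots,T_n)$ is $\omega$-unitary the matrix $H_{ij}=\Theta_{k\bar l}T_i^k\bar T_j^l$ is Hermitian with the same spectrum as $\lambda(\Theta)$, reducing the statement to a diagonal-versus-eigenvalue inequality for Hermitian matrices. The Ky Fan minimum principle gives $\sum_{i\le k}\lambda_i\leq\sum_{i\le k}d_i^\uparrow$ for each $k$ with equality at $k=n$, Abel summation with the nonnegative weights $\mu_k-\mu_{k+1}$ upgrades these partial-sum bounds to $\sum_i\mu_i\lambda_i\leq\sum_i\mu_i d_i^\uparrow$, and the rearrangement inequality (decreasing paired with increasing minimizes the inner product) gives $\sum_i\mu_i d_i^\uparrow\leq\sum_i\mu_i d_i$. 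The only difference from the paper is that you prove the linear-algebra lemma rather than quoting it; that buys self-containedness at the cost of a few lines, and you correctly observe that the genuine content the paper adds on top of Marcus/CNS is the observation that $d_i=\Theta(T_i,J\bar T_i)$ when $(T_i)$ is an $\omega$-unitary frame.
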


Let $\mu_1,\cdots,\mu_{n-1}$ be as in \eqref{key-18-yuan3}, and set $\mu_n=0$. Let's denote $T_\alpha=\sum_{k=1}^nT_\alpha^k\frac{\partial }{\partial z_k}$. %then   $T_\alpha^k(0)=\delta_{\alpha k}$. 
%This lemma leads to a local operator (\cite{CNS3}, also see \cite{LiSY2004}): %acting on   real $(1,1)$-forms: 
For $\Theta=\sqrt{-1}\Theta_{i\bar j}dz_i\wedge d\bar z_j$, we define
\begin{equation}
    \begin{aligned}
    \Lambda_\mu(\Theta):= \sum_{\alpha=1}^{n-1}\mu_{\alpha} T_{\alpha}^i \bar T_{\alpha}^j \Theta_{i\bar j}.  \nonumber
    \end{aligned}
    \end{equation}
  %  So $ \Lambda_\mu(\Theta)=\sum_{\alpha=1}^{n-1}\mu_{\alpha} T_{\alpha}^i \bar T_{\alpha}^j \Theta_{i\bar j}.$
    
   % It requires to prove
    \begin{lemma}
    \label{lemma-key2-yuan3}
   % Let $1/2\leq t_0<1$
   There are parameters $\tau$, $A$, $l_i$, $\delta$ depending only on $|u|_{C^0(M)}$, 
   $|\nabla u|_{C^0(\partial M)}$,  
   $|\underline{u}|_{C^2(M)}$, %$\sup_{\bar M} (\mathrm{disc}(\lambda(\mathfrak{\underline{g}}),\partial\Gamma))^{-1}$, 
   % $\mathrm{{dist}}(\lambda(\mathfrak{\underline{g}}),\partial \Gamma)^{-1}$,
   %$(\delta_{\psi,f})^{-1}$, 
   $\partial M$ up to third derivatives and other known data, such that 
   \begin{equation}
    \begin{aligned}
 \,&    \Lambda_\mu (\mathfrak{g}[w])   \leq0  \mbox{ in } \Omega_\delta, \,& u\leq w \mbox{ on } \partial \Omega_\delta. \nonumber
    \end{aligned}
    \end{equation}
    \end{lemma}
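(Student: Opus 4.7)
The plan is to run a Caffarelli--Nirenberg--Spruck style barrier argument, fixing the four parameters in the order $l_i\to\tau\to\delta\to A$. The crucial structural observation is that $T_1(z),\dots,T_{n-1}(z)$ are tangent to the level set $\{d=d(z)\}$, so $T_\alpha^i d_i\equiv 0$; therefore the rank-one part of $A(d^2)_{i\bar j}=2A\,d\cdot d_{i\bar j}+2A\,d_id_{\bar j}$ drops out of $\Lambda_\mu$ entirely, leaving
\[\Lambda_\mu\bigl(\sqrt{-1}\partial\bar\partial(Ad^2)\bigr)(z)=2A\,d(z)\,\bigl(\Lambda_\mu(\sqrt{-1}\partial\bar\partial\sigma)(z)+\tau+O(|z|)\bigr),\]
which is the source of the genuinely negative term that will dominate all error contributions.

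The boundary comparison $u\le w$ on $\partial\Omega_\delta$ is essentially already in the text: on $\partial M\cap\bar\Omega_\delta$ we have $u=\underline u=\varphi$ and $\sigma\equiv 0$, so $w-u=A\tau^2|z|^4\ge 0$, while on $M\cap\partial B_\delta(0)$ the displayed computation gives $u-w\le -\tfrac12 A\tau^2\delta^4$ once $A$ is large relative to $|u|_{C^0}$, $\tau$, $\delta$, and $\max_i|l_i|$. For the interior inequality I would first check that $\Lambda_\mu(\mathfrak{g}[w])(0)=0$: since $T_\alpha(0)=\partial/\partial z_\alpha$, $\sigma_\alpha(0)=\sigma(0)=d(0)=l(0)=0$, the boundary identity \eqref{yuan3-buchong5} combined with the diagonal normalization of $A_{t_0}$ at $p_0$ gives
\[\Lambda_\mu(\mathfrak{g}[w])(0)=\sum_{\alpha=1}^{n-1}\mu_\alpha\Bigl(\underline{\mathfrak{g}}_{\alpha\bar\alpha}+\frac{\eta}{t_0}\sigma_{\alpha\bar\alpha}\Bigr)(0)=\frac{1}{t_0}\sum_{\alpha=1}^{n-1}\mu_\alpha\tilde\lambda'_\alpha=0\]
by \eqref{key-18-yuan3}. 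The first-order Taylor piece of $\Lambda_\mu(\mathfrak{g}[w])$ at $0$ depends linearly on $l=(l_1,\dots,l_n)$, with diagonal coefficient $\sum_\alpha\mu_\alpha\sigma_{\alpha\bar\alpha}(0)\le -a_1<0$ by \eqref{key-1-yuan3}; a standard linear-algebra argument, as in \cite{CNS3,LiSY2004}, then solves the $2n$ real conditions uniquely for $l_1,\dots,l_n$ with $|l_i|$ bounded by $|\underline u|_{C^2}$, $\partial M$ in $C^3$, and the already controlled quantity $\eta/t_0$.

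After this first-order kill, one has $\Lambda_\mu(\mathfrak{g}[w])(z)\le C|z|^2+\Lambda_\mu(\sqrt{-1}\partial\bar\partial(Ad^2))(z)$ with $C$ depending on $|\underline u|_{C^2}$, $|\chi|_{C^1}$, $\partial M$ in $C^3$, and $|l_i|$. Picking $\tau<a_1/2$ makes $\Lambda_\mu(\sqrt{-1}\partial\bar\partial\sigma)(0)+\tau\le -a_1/2$, shrinking $\delta$ preserves this inequality throughout $\Omega_\delta$, and then $d(z)\ge\tau|z|^2$ yields
\[\Lambda_\mu(\mathfrak{g}[w])(z)\le C|z|^2-\tfrac{A a_1 \tau}{2}|z|^2,\]
so $A$ taken sufficiently large finishes the interior bound and simultaneously supplies $A\tau^2\delta^4\ge 2|u-\underline u|_{C^0}$ for the boundary comparison. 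The main obstacle is the simultaneous fitting of all four parameters: $l$ must be solvable (which requires the strict sign in \eqref{key-1-yuan3}), $\tau$ must be small to keep $\Lambda_\mu(\sqrt{-1}\partial\bar\partial d)$ strictly negative, $\delta$ must be small to localize the quadratic expansion, yet $A$ must end up very large to beat the quadratic errors and secure the boundary comparison. Tracking this ordering carefully, together with the identity $T_\alpha^i d_i\equiv 0$ that kills the dangerous rank-one term in the Hessian of $d^2$, is the core of the argument.
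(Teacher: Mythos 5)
Your argument reproduces the paper's proof essentially step for step: the vanishing $\Lambda_\mu(\mathfrak{g}[w])(0)=0$ via \eqref{key-18-yuan3}, the observation that $T_\alpha^i d_i\equiv 0$ kills the rank-one term in $\partial\bar\partial(d^2)$, the linear solve for $l$ using \eqref{key-1-yuan3}, and the domination of the $O(|z|^2)$ error by $-\tfrac{Aa_1}{2}d(z)\le -\tfrac{Aa_1\tau}{2}|z|^2$. The only slip is your stated order $l_i\to\tau\to\delta\to A$: since the diagonal coefficient for $l_\alpha$ is actually $\sum_\beta\mu_\beta\sigma_{\beta\bar\beta}(0)-\tau\mu_\alpha$ (cf.\ \eqref{chosen-1}), $l$ depends on $\tau$ and one must either fix $\tau$ first or note the system is solvable with uniform bounds for all small $\tau$ — a trivial reordering, not a gap.
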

    
    \begin{proof}
 By direct computation
%Note that $\mathfrak{g}[\underline{u}+(\eta/t_0)\sigma]$
     \begin{equation}
    \begin{aligned}
    \Lambda_\mu (\mathfrak{g}[w])
    =\,&\sum_{\alpha=1}^{n-1} \mu_\alpha T_{\alpha}^i \bar T_{\alpha}^j
     (\chi_{i\bar j}+\underline{u}_{i\bar j}+\frac{\eta}{t_0}\sigma_{i\bar j}) 
       + 2Ad(z)\sum_{\alpha=1}^{n-1} \mu_\alpha T_{\alpha}^i \bar T_{\alpha}^j d_{i\bar j}
       \\\,&
       + \sum_{\alpha=1}^{n-1} \mu_\alpha T_{\alpha}^i \bar T_{\alpha}^j (l(z)\sigma_{i\bar j}
       +l_i\sigma_{\bar j}+\sigma_i l_{\bar j}).  \nonumber
    \end{aligned}
    \end{equation}
\begin{itemize}
     \item
At the origin $\{z=0\}$, $T_{\alpha}^i=\delta_{\alpha i}$, 
\begin{equation}
    \begin{aligned}
   \sum_{\alpha=1}^{n-1} \mu_\alpha T_{\alpha}^i \bar T_{\alpha}^j
     (\chi_{i\bar j}+\underline{u}_{i\bar j}+\frac{\eta}{t_0}\sigma_{i\bar j}) (0)
     =\frac{1}{t_0}\sum_{\alpha=1}^{n-1}\mu_\alpha (A_{t_0})_{\alpha\bar\alpha}=0.  \nonumber
    \end{aligned}
    \end{equation}
    So there are complex constants $k_i$ %with $\bar k_i =k_{\bar i}$ 
    such that
    \begin{equation}
    \begin{aligned}
   \sum_{\alpha=1}^{n-1} \mu_\alpha T_{\alpha}^i \bar T_{\alpha}^j
     (\chi_{i\bar j}+\underline{u}_{i\bar j}+\frac{\eta}{t_0}\sigma_{i\bar j}) (z)=\sum_{i=1}^n (k_i z_i+ \bar k_{i} \bar z_{i})+O(|z|^2). \nonumber
    \end{aligned}
    \end{equation}

\item   % For  $2Ad(z)\sum_{\alpha=1}^{n-1} \mu_\alpha T_{\alpha}^i \bar T_{\alpha}^j d_{i\bar j}$,
%$\Lambda_\mu (\sqrt{-1}\partial\overline{\partial}(d(z)^2))$, 
    \begin{equation}
    \begin{aligned}
   % \Lambda_\mu (\sqrt{-1}\partial\overline{\partial}(d(z)^2))   =\,& 2d(z)\Lambda_\mu (\sqrt{-1}\partial\overline{\partial}d(z))
   % +2\sum_{\alpha}\mu_\alpha T_{\alpha}^i \bar T_{\alpha}^j \partial_i d \partial_{\bar j}d \\
    %=\,& 2d(z)\Lambda_\mu (\sqrt{-1}\partial\overline{\partial}d(z))
    2Ad(z)\sum_{\alpha=1}^{n-1} \mu_\alpha T_{\alpha}^i \bar T_{\alpha}^j d_{i\bar j} \leq -\frac{a_1A}{2}d(z). \nonumber
    \end{aligned}
    \end{equation}
    since  
     \begin{equation}
    \begin{aligned}
    \sum_{\alpha=1}^{n-1}\mu_\alpha T_{\alpha}^i \bar T_{\alpha}^j d_{i\bar j}
 =\,&   \sum_{\alpha=1}^{n-1} \mu_\alpha\sigma_{\alpha\bar\alpha}(z)+
  \tau\sum_{\alpha=1}^{n-1}\mu_\alpha
  \\\,&+\sum_{\alpha=1}^{n-1}\mu_\alpha \left(T_{\alpha}^i \bar T_{\alpha}^j (z)- T_{\alpha}^i \bar T_{\alpha}^j (0)\right)d_{i\bar j} \\
 %\leq\,& -a_1+\tau+C_1 \sigma(z) \leq -\frac{a_1}{4}  \nonumber
 =\,& -a_1+\tau+O(|z|) \leq -\frac{a_1}{4}  \nonumber
    \end{aligned}
    \end{equation}
    provided one chooses $0<\delta, \tau\ll1.$
Here we also use \eqref{key-1-yuan3},
    \begin{equation}
    \label{yuan3-buchong2}
    \begin{aligned}
    \sum_{\alpha=1}^{n-1} \mu_\alpha T_{\alpha}^i \bar T_{\alpha}^j(z)
    =\sum_{\alpha=1}^{n-1} \mu_\alpha T_{\alpha}^i \bar T_{\alpha}^j(0)+O(|z|)
    =\sum_{\alpha=1}^{n-1} \delta_{\alpha i} \delta_{\alpha j}+O(|z|),
       \end{aligned}
    \end{equation}
    and
\begin{equation}
     \label{c3-yuan3}
     \begin{aligned}
     \sum_{\alpha=1}^{n-1}\mu_\alpha\sigma_{\alpha\bar\alpha}(z)=\sum_{\alpha=1}^{n-1}\mu_\alpha\sigma_{\alpha\bar\alpha}(0)+O(|z|).
     \end{aligned}
     \end{equation}  
  % which requires $\partial M\in C^3$. 
   %As a contrast, it only requires $\partial M\in C^2$ in \cite{yuan2019,yuan2017}, since $t_0=0$ by  Levi flat assumption or $t_0\leq0$ by more general condition \eqref{bdry-assumption1}.
\item  
 \begin{equation}
    \begin{aligned}
  \,& 
 %  \sum_{\alpha=1}^{n-1}\mu_\alpha T^i_\alpha \bar T^j_\alpha (-l(z)\sigma)_{i\bar j} \\   =\,&
     l(z)\sum_{\alpha=1}^{n-1}\mu_\alpha T^i_\alpha \bar T^j_\alpha
    \sigma_{i\bar j}+\sum_{\alpha=1}^{n-1}\mu_{\alpha}T^i_\alpha \bar T^j_\alpha(l_i\sigma_{\bar j}
    +\sigma_i l_{\bar j}) \\
   % =\,& -(l_kz_k+\bar l_k\bar z_k) \Lambda_\mu (\sqrt{-1}\partial \overline{\partial}{\sigma})
   % +\tau\sum_{\alpha=1}^{n-1}\mu_{\alpha}T^i_\alpha \bar T^j_\alpha(l_i z_{j} +z_{\bar i}l_{\bar j})  \\
   = \,&
    l(z) \sum_{\alpha=1}^{n-1}\mu_\alpha\sigma_{\alpha\bar\alpha}(0)
    - \tau\sum_{\alpha=1}^{n-1} \mu_\alpha (z_\alpha l_\alpha+\bar z_{\alpha}  \bar l_{\alpha})
    +O(|z|^2) \nonumber
    \end{aligned}
    \end{equation}
    since by \eqref{yuan3-buchong2},  
    $ \sum_{i=1}^n T_\alpha^i \sigma_i=-\tau\sum_{i=1}^n T_\alpha^i
\bar z_i $
we have
    \begin{equation}
    \begin{aligned}
     l(z) \sum_{\alpha=1}^{n-1}\mu_\alpha T^i_\alpha \bar T^j_\alpha \sigma_{i\bar j} 
    = l(z) \sum_{\alpha=1}^{n-1}\mu_\alpha\sigma_{\alpha\bar\alpha}(0)
+O(|z|^2),   \nonumber
    \end{aligned}
    \end{equation}
    \begin{equation}
    \begin{aligned}
       \sum_{\alpha=1}^{n-1}\mu_\alpha T^i_\alpha \bar T^j_\alpha(l_i \sigma_{\bar j} + \sigma_i l_{\bar j})
   %  =\tau  \sum_{\alpha=1}^{n-1}\mu_\alpha T^i_\alpha \bar T^j_\alpha(l_{\bar j} z_{\bar i} + z_j l_{i})
   =-\tau \sum_{\alpha=1}^{n-1} \mu_\alpha(\bar l_{\alpha} \bar z_{\alpha}+l_\alpha z_\alpha)
+O(|z|^2).   \nonumber
    \end{aligned}
    \end{equation}
    \end{itemize}
    Putting these together, 
     \begin{equation}
     \label{together1}
    \begin{aligned}
    \Lambda_\mu(\mathfrak{g}[w])\leq\,&
  \sum_{\alpha=1}^{n-1} 2\mathfrak{Re}
  \left\{z_\alpha(k_\alpha 
  -\tau\mu_\alpha l_\alpha
  +l_\alpha\sum_{\beta=1}^{n-1}\mu_\beta \sigma_{\beta\bar\beta}(0))\right\}
    \\ \,& + 2\mathfrak{Re}\left\{z_n(k_n+l_n\sum_{\beta=1}^{n-1}\mu_\beta \sigma_{\beta\bar\beta}(0))\right\} 
    -\frac{Aa_1}{2}d(z) + O(|z|^2).   \nonumber
    \end{aligned}
    \end{equation}
   % For simplicity we set $\mu_n=0$.
   Let $l_n=-\frac{k_n}{\sum_{\beta=1}^{n-1} \mu_\beta \sigma_{\beta\bar\beta}(0)}$.
  For $1\leq \alpha\leq n-1$, we set
  \begin{equation}
  \label{chosen-1}
    \begin{aligned}
    l_\alpha=-\frac{k_\alpha}{\sum_{\beta=1}^{n-1}\mu_\beta \sigma_{\beta\bar\beta}(0)-\tau \mu_\alpha}.
    \end{aligned}
    \end{equation}
From $\mu_\alpha\geq 0$ and \eqref{key-1-yuan3}, we see
% $\sum_{\beta=1}^{n-1}\mu_\beta \sigma_{\beta\bar\beta}(0)-\tau \mu_i<0$    and so 
    such $l_i$ (or equivalently the  $l(z)$) are all well defined and are uniformly bounded.
    
    We thus complete the proof if $0<\tau, \delta\ll1$, $A\gg1$.
    \end{proof}

    \subsection{Completion of the proof of Lemma \ref{keylemma1-yuan3}}
    %\begin{proof}
  %  [Proof of Lemma \ref{keylemma1-yuan3}]
  Let $w$ be as in Lemma \ref{lemma-key2-yuan3}. From the construction above, we know that there is a uniform positive constant $C_1'$ such that
  $$ |\mathfrak{g}[w]|_{C^0(\Omega_\sigma)}\leq C_1'.$$
    Let $\lambda[w]=\lambda_{\omega}(\mathfrak{g}[w])$. Assume 
    $\lambda_1[w]\leq \cdots\leq \lambda_n[w]$.
    Lemma \ref{lemma-key2-yuan3}, together with Lemma \ref{lemma-yuan3-buchong1}, implies
     %together with \cite[Lemma 6.2]{CNS3}, implies
    %a result in \cite{Marcus1956} (see also \cite[Proposition 2.4]{LiSY2004}), implies
    \begin{equation}
    \begin{aligned}
    \sum_{\alpha=1}^{n-1} \mu_\alpha \lambda_\alpha[w]\leq 0 \mbox{ in } \Omega_\delta.  \nonumber
    \end{aligned}
    \end{equation}
So $(\lambda_1[w],\cdots,\lambda_{n-1}[w])\notin\Gamma_\infty$ by \eqref{key-18-yuan3}. In other words, $\lambda[w]\in X$, where
\[X=\{\lambda\in\mathbb{R}^n: \lambda'\in \mathbb{R}^{n-1}\setminus \Gamma_\infty\}\cap \{\lambda\in\mathbb{R}^n:   |\lambda|\leq C_1'\}.\]
Let $$\bar\Gamma^{\inf_M\psi}=\{\lambda\in\Gamma: f(\lambda)\geq \inf_M\psi\}$$
 Notice that $\Gamma_\infty$ is open so $X$ is a compact subset; furthermore $X\cap \bar\Gamma^{\inf_M\psi}=\emptyset$. 
So we can deduce that the distance between $\bar\Gamma^{\inf_M\psi}$ and $X$ 
is greater than some positive constant depending on  
$\delta_{\psi,f}$ and other known data.
Therefore, there exists an $\epsilon>0$  such that for any $z\in\Omega_\delta$
\begin{equation}
    \begin{aligned}
    \epsilon\vec{\bf 1} +\lambda[w]\notin \bar\Gamma^{\inf_M\psi}.  \nonumber
    \end{aligned}
    \end{equation}

Since one can choose a  positive constant $C'$ such that $ x_n\leq C'|z|^2$ on 
$\partial M\cap \bar{\Omega}_\delta$, 
there is a positive constant $C_2$ depending only on $M$ and $\delta$ so that  
$$x_n\leq C_2 |z|^2 \mbox{ on }\partial\Omega_\sigma.$$

 Let $\epsilon$ and $C_2$ be as above, we define ${h}(z)=w(z)+\epsilon (|z|^2-\frac{x_n}{C_2})$. Thus
    \begin{equation}
    \begin{aligned}
    u\leq {h} \mbox{ on } \partial \Omega_\delta.  \nonumber
    \end{aligned}
    \end{equation}
    Moreover, $\chi_{i\bar j}+{h}_{i\bar j}=(\chi_{i\bar j}+w_{i\bar j})+\epsilon\delta_{ij}$ so $\lambda[{h}]\notin \bar\Gamma^{\inf_M\psi}.$
    By \cite[Lemma B]{CNS3}, we have $$u\leq {h} \mbox{ in }\Omega_\delta.$$
   Notice 
   $u(0)=\varphi(0)$ and ${h}(0)=\varphi(0)$, we have $u_{x_n}(0)\leq h_{x_n}(0)$ then
    \begin{equation}
    \begin{aligned}
    t_0\leq \frac{1}{1+\epsilon/(\eta C_2)}, \mbox{ i.e., }  (1-t_0)^{-1}\leq 1+\frac{\eta C_2}{\epsilon}.  \nonumber
    \end{aligned}
    \end{equation}
  %  Together with \eqref{key-14-yuan3} this completes the proof.
% \end{proof}

 \begin{remark}
   In fact, \eqref{key-14-yuan3} and \eqref{yuan3-buchong5} imply $|u|_{C^0(M)}+|\nabla u|_{C^0(\partial M)}\leq C$.
    \end{remark}
    
\begin{remark}
The discussions above work for more general equations of the form
\begin{equation}
\begin{aligned}
f(\lambda(\chi+\sqrt{-1}\partial\overline{\partial}u+\sqrt{-1}\partial u\wedge \overline{\zeta}+\sqrt{-1}\zeta\wedge\overline{\partial}u))=\psi \nonumber
\end{aligned}
\end{equation}
where $\zeta=\sum_{i=1}^n\zeta_i dz^i$ is a smooth $(1,0)$-form.
%and it has the form $\chi+\partial\overline{\gamma}+\overline{\partial}\gamma$, 
%$\gamma=-\sqrt{-1}(\frac{\partial u}{2}+u\zeta)$,
% if $\zeta$ is holomorphic, which may apply to  Aeppli cohomology group $H_A^{1,1}(M)$.  
  % Such equations with gradient terms was also studied in \cite{TW123,yuan2018CJM,yuan2019}.
\end{remark}

     \section{Proof of Proposition \ref{proposition-quar-yuan2}}
   \label{sec4}
  %  \section{The Dirichlet problem of  Monge-Amp\`ere equation for $(n-1)$-PSH functions and extensions}
%\label{appendix-gauduchon}

  The equation \eqref{MA-n-1} can be reduced to 
\begin{equation}
\label{mainequ-gauduchon}
\begin{aligned}
\log P_{n-1}(\lambda(\tilde{\mathfrak{g}}[u]))=\psi \mbox{ in }M,
\end{aligned}
\end{equation}
for $\psi=(n-1)\phi+n\log (n-1)$, where  
 \begin{equation}
 \label{yuan3-buchong111}
 \begin{aligned}
 \tilde{\mathfrak{g}}_{i\bar j}=u_{i\bar j}+\tilde{\chi}_{i\bar j}+W_{i\bar j},
 \end{aligned}
 \end{equation}
 %$\psi=(n-1)\phi+n\log(n-1)$,
here $\check{\chi}_{i\bar j}=(\mathrm{tr}_\omega \tilde{\chi})g_{i\bar j}-(n-1)\tilde{\chi}_{i\bar j}$,
$W_{i\bar j}=(\mathrm{tr}_\omega Z)g_{i\bar j}-(n-1)Z_{i\bar j}$. %(see also  \cite{Popovici2015,TW19}).
Locally,
\begin{equation}
\label{Z-tensor1}
\begin{aligned}
Z_{i\bar j}=\,& \frac{g^{p\bar q}   \bar T^l_{ql} g_{i\bar j}u_p
+g^{p\bar q} T^{k}_{pk} g_{i\bar j}u_{\bar q}
-g^{k\bar l}g_{i\bar q} \bar T^q_{lj}u_k
 -g^{k\bar l}g_{q\bar j}T^{q}_{ki}u_{\bar l}
-\bar T^{l}_{jl}u_i -T^{k}_{ik}u_{\bar j} }{2(n-1)}, %\nonumber
\end{aligned}
\end{equation}
where $T^k_{ij}$ are the torsions,
see also \cite{STW17}.
%In this section we restrict solutions $u$ and subsolutions $\underline{u}$ in class of $(n-1)$-PSH functions satisfying  \eqref{gamma-cone}.
%In this paper we say $v$ is a $(n-1)$-PSH function if it satisfies  \eqref{gamma-cone}.
%$P_{n-1}(\lambda) = \prod_{i=1}^n (\lambda_{1}+\cdots+\hat{\lambda}_i+\cdots+ \lambda_{n})$
%$\log P_{n-1}$ is defined as 
Here %$f(\lambda)=\log P_{n-1}(\lambda) = \sum_{i=1}^n \log\mu_i$, $\mu_i=\sum_{j\neq i}\lambda_j$,
\begin{equation}
\begin{aligned}
\,& f(\lambda)=\log P_{n-1}(\lambda) = \sum_{i=1}^n \log\mu_i,  \,& \mu_i=\sum_{j\neq i}\lambda_j  \nonumber
\end{aligned}
\end{equation}
%in \eqref{function-pk} for $k=n-1$ and corresponds to the cone $\mathcal{P}_{n-1}$,
with corresponding cone 
  \begin{equation}
 \label{yuan3-buchong112}
 \begin{aligned}
 \mathcal{P}_{n-1}=\{\lambda\in \mathbb{R}^n: \mu_i>0, \mbox{  } \forall 1\leq i\leq n\}.
  \end{aligned}
 \end{equation}
One can verify that condition \eqref{gamma-cone} is equivalent to % at each point in $M$,
$\lambda(\tilde{\mathfrak{g}}[u])\in \mathcal{P}_{n-1} \mbox{ in } \bar M,$
which allows one to seek the solutions of  \eqref{mainequ-gauduchon} or equivalently  \eqref{MA-n-1}
 within the framework of elliptic equations, since
 $f(\lambda)=\log P_{n-1}(\lambda)$ satisfies \eqref{elliptic}, \eqref{concave} and  \eqref{addistruc} in  $\mathcal{P}_{n-1}$.

%\subsection{Some useful formulas}
\subsection{Preliminaries}
  Throughout this section we denote 
   \[\mathfrak{\tilde{g}}=\mathfrak{\tilde{g}}[u],  \mbox{  } Z=Z[u], \mbox{  } W=W[u], \mbox{  } \mathfrak{\underline{\tilde{g}}}=\mathfrak{\tilde{g}}[\underline{u}],  \mbox{ } \underline{Z}=Z[\underline{u}], \mbox{ } \underline{W}=W[\underline{u}]\]
for solution $u$ and subsolution $\underline{u}$.

%By \eqref{bdr-ind}, \eqref{Z-tensor1},  $T_{nn}^n=0,$ and $g_{i\bar j}=\delta_{ij}$, one has
Fix $x_0\in \partial M$. %In the proof   we use the notation and local coordinate \eqref{goodcoordinate1}.
Around $x_0$ we set local holomorphic coordinates 
$(z_1,\cdots,z_n)$, $z_i=x_i+\sqrt{-1}y_i$, centered at $x_0$, such that $g_{i\bar j}(0)=\delta_{ij}$ and $\frac{\partial}{\partial x_n}$ is the inner normal vector at the origin, and ${T_{x_0}}_{\partial M}^{1,0}$ is spanned by $\frac{\partial}{\partial z_\alpha}$
for $1\leq\alpha\leq n-1$.

 By \eqref{Z-tensor1}, \eqref{yuan3-buchong5} and $W[v]_{i\bar j}=(\mathrm{tr}_\omega Z[v])g_{i\bar j}-(n-1)Z[v]_{i\bar j}$, one can verify that at the origin ($\{z=0\}$)  % (see \cite{STW17} and \cite{yuan2019})
 \begin{equation}
 \label{yuan3-buchong11}
    \begin{aligned}
    \sum_{\alpha=1}^{n-1} W[v]_{\alpha\bar\alpha}= (n-1)Z[v]_{n\bar n},  %\nonumber
    \end{aligned}
    \end{equation}
%\end{lemma}
%\begin{lemma}
%[\cite{yuan2019}]
%\label{lemma-B2}
%At the origin,
\begin{equation}
 \label{yuan3-buchong12}
\begin{aligned} 
\sum_{\alpha=1}^{n-1}\tilde{\mathfrak{g}}[v]_{\alpha\bar\alpha} 
=  \sum_{\alpha=1}^{n-1}(v_{\alpha\bar\alpha}+ \check{\chi}_{\alpha\bar\alpha})+ \sum_{\alpha=1}^{n-1}W[v]_{\alpha\bar\alpha},
%= \,& \sum_{\alpha=1}^{n-1}(u_{\alpha\bar\alpha}+  \check{\chi}_{\alpha\bar\alpha})+ (n-1)Z[v]_{n\bar n}, %\nonumber
\end{aligned}
\end{equation}
%\end{lemma}
% one has at the origin %(see e.g. \cite{STW17})
\begin{equation}
\label{stw-44}
\begin{aligned}
2(n-1)Z[v]_{n\bar n}
%=\,&  \sum_{p,k=1}^n (\bar T^k_{pk}  v_p+ T^{k}_{pk}  v_{\bar p})-  \sum_{k=1}^n(\bar T^n_{kn}v_k + T^{n}_{kn}v_{\bar k} +\bar T^{k}_{nk}v_n + T^{k}_{nk}v_{\bar n} )  \\
=\,&  \sum_{\alpha, \beta=1}^{n-1}  (\bar T^\beta_{\alpha \beta}  v_\alpha
+ T^{\beta}_{\alpha \beta}  v_{\bar \alpha}), %\mbox{ (see e.g. \cite{STW17})}, %\nonumber
% =  \sum_{\alpha,\beta=1}^{n-1}   (\bar T^\beta_{\alpha \beta}  \underline{u}_\alpha+ T^{\beta}_{\alpha \beta}  \underline{u}_{\bar \alpha}) \nonumber  = 2(n-1)\underline{Z}_{n\bar n}.
\end{aligned}
\end{equation}

%At the origin, $Z_{n\bar n}=\underline{Z}_{n\bar n}$ and
\begin{equation}
\label{lemma-B5}
\begin{aligned} \sum_{\alpha=1}^{n-1}\tilde{\mathfrak{g}}_{\alpha\bar\alpha} =
  \sum_{\alpha=1}^{n-1}\underline{\tilde{\mathfrak{g}}}_{\alpha\bar\alpha}
  +(u-\underline{u})_{x_n}\sum_{\alpha=1}^{n-1}\sigma_{\alpha\bar\alpha}. %\mbox{ (see e.g. \cite{yuan2019})}.  
\end{aligned}
\end{equation}
%The  \eqref{stw-44} is a key ingredient in \cite{STW17}, and \eqref{lemma-B5} is used by the author \cite{yuan2019} to deal with Dirichlet problem \eqref{MA-n-1} and \eqref{bdy-value-2} on Hermitian manifolds with mean pseudoconcave boundary.

We rewrite $W[v]_{i\bar j}$ as follows
\begin{equation}
\begin{aligned}
W[v]_{i\bar j} = W_{i\bar j}^k v_k + W_{i\bar j}^{\bar k} v_{\bar k}. \nonumber
\end{aligned}
\end{equation}
%From \eqref{yuan3-buchong11} and \eqref{stw-44}, 
One can see   
 \begin{equation}
 \label{lemma-B4}
    \begin{aligned}
    \sum_{\alpha=1}^{n-1} W_{\alpha\bar\alpha}^n = \sum_{\alpha=1}^{n-1} W_{\alpha\bar\alpha}^{\bar n}=0
    \end{aligned}
    \end{equation}
 at the origin.

%\begin{lemma}\label{lemma-B1} At the origin,
 %$Z_{n\bar n}=\underline{Z}_{n\bar n}$. 
%\end{lemma}
%\begin{lemma}

%\end{lemma}

\subsection{A key ingredient and its proof}

As in Section \ref{sec3}, we set $\eta=(u-\underline{u})_{x_n}(0)$. 
We know that $\eta\geq 0$. 
Let  
\begin{equation}
\label{t0-2}
    \begin{aligned}
    t_0=-{\eta \sum_{\alpha=1}^{n-1}\sigma_{\alpha\bar\alpha}(0)}/{\sum_{\alpha=1}^{n-1}\underline{\tilde{\mathfrak{g}}}_{\alpha\bar\alpha}(0)}.
    \end{aligned}
    \end{equation}
     We assume throughout $\eta>0$. (Otherwise $t_0=0$ and the proof is almost parallel to that given in \cite{yuan2019}).
From $\lambda(\underline{\tilde{\mathfrak{g}}})\in \mathcal{P}_{n-1}$ we know $\sum_{\alpha=1}^{n-1}\underline{\tilde{\mathfrak{g}}}_{\alpha\bar\alpha}(0)>0$.
    Clearly $t_0<1$ since $\sum_{\alpha=1}^{n-1}{\tilde{\mathfrak{g}}}_{\alpha\bar\alpha}(0)>0$ ($\lambda({\tilde{\mathfrak{g}}})\in \mathcal{P}_{n-1}$).
    \begin{lemma}
    \label{key-lemma-B1}
    There is a uniform positive constant $C$ depending on $(\delta_{\psi,f})^{-1}$, $|u|_{C^0(\bar M)}$,  $|\nabla u|_{C^0(\partial M)}$, $|\underline{u}|_{C^2(\bar M)}$, 
    %$\sup_{\bar M} (\mathrm{disc}(\lambda(\mathfrak{\underline{\tilde{g}}}),\partial\mathcal{P}_{n-1}))^{-1}$, 
    $\partial M$ up to third derivatives and other known data such that
    \[(1-t_0)^{-1}\leq C.\]
    \end{lemma}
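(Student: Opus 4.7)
The plan is to mirror the strategy behind Lemma \ref{keylemma1-yuan3}, exploiting the especially simple structure of the projection of the cone $\mathcal{P}_{n-1}$ onto $\mathbb{R}^{n-1}$. Since $(\lambda',\lambda_n)\in\mathcal{P}_{n-1}$ requires in particular that $\sum_{\alpha=1}^{n-1}\lambda'_\alpha>0$ (this is the defining inequality $\mu_n>0$), the projected cone $(\mathcal{P}_{n-1})_\infty$ is the half-space $\{\lambda':\sum_\alpha\lambda'_\alpha>0\}$, whose boundary is a single hyperplane. Consequently, the Caffarelli--Nirenberg--Spruck supporting-plane construction gives only one choice of multipliers, namely $\mu_\alpha=1/(n-1)$ (up to scaling), and no delicate Marcus-type argument is required. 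Further, by \eqref{lemma-B5} the defining identity \eqref{t0-2} can be rewritten as $\sum_{\alpha=1}^{n-1}(t_0\underline{\tilde{\mathfrak{g}}}_{\alpha\bar\alpha}(0)+\eta\sigma_{\alpha\bar\alpha}(0))=0$, so $t_0$ is precisely the parameter at which the ``interpolated tangential tensor'' hits $\partial(\mathcal{P}_{n-1})_\infty$.

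First I would reduce to $t_0>0$, since otherwise $(1-t_0)^{-1}\leq 1$ is automatic. From $\lambda(\underline{\tilde{\mathfrak{g}}})\in\mathcal{P}_{n-1}$ on the compact manifold $\bar M$ one obtains a uniform lower bound $a_0>0$ for $\sum_{\alpha=1}^{n-1}\underline{\tilde{\mathfrak{g}}}_{\alpha\bar\alpha}$ on $\partial M$. Feeding this into the identity $\sum_{\alpha}(t_0\underline{\tilde{\mathfrak{g}}}_{\alpha\bar\alpha}+\eta\sigma_{\alpha\bar\alpha})=0$ at the origin and using the maximum-principle bound $\eta\leq C$ (from \eqref{key-14-yuan3}) yields the analog of \eqref{key-1-yuan3}:
\begin{equation}
-\sum_{\alpha=1}^{n-1}\sigma_{\alpha\bar\alpha}(0)\geq a_1>0, \nonumber
\end{equation}
with $a_1$ proportional to $t_0$. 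This is the crucial sign condition that drives the rest of the argument.

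Next I would construct the barrier
\begin{equation}
w(z)=\underline{u}(z)+(\eta/t_0)\sigma(z)+l(z)\sigma(z)+Ad(z)^2, \nonumber
\end{equation}
with $d(z)=\sigma(z)+\tau|z|^2$ and $l(z)=\sum_i(l_iz_i+\bar l_i\bar z_i)$ as in Lemma \ref{lemma-key2-yuan3}, and verify that the tangential trace
\begin{equation}
\Lambda(\tilde{\mathfrak{g}}[w]):=\sum_{\alpha=1}^{n-1}\tilde{\mathfrak{g}}[w](T_\alpha,J\bar T_\alpha) \nonumber
\end{equation}
satisfies $\Lambda(\tilde{\mathfrak{g}}[w])\leq 0$ in $\Omega_\delta$ and $u\leq w$ on $\partial\Omega_\delta$, for suitable choices of $\tau,A,l_i,\delta$. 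By construction of $t_0$ the quantity $\Lambda(\tilde{\mathfrak{g}}[w])$ vanishes at the origin, and one expands to first order in $z$ exactly as in the proof of Lemma \ref{lemma-key2-yuan3}. The extra terms compared with the previous setting come from $W[w]$; at the origin only the normal-derivative contributions $W^n_{\alpha\bar\alpha}$, $W^{\bar n}_{\alpha\bar\alpha}$ show up in the tangential trace by \eqref{yuan3-buchong11} and \eqref{stw-44}, and the vanishing identities \eqref{lemma-B4} ensure these do not disturb the zeroth-order cancellation. Tangential first-derivative contributions generate linear terms in $z_\alpha$ that can be absorbed by choosing $l_\alpha$ as in \eqref{chosen-1}, using the positivity $-\sum_{\alpha}\mu_\alpha\sigma_{\alpha\bar\alpha}(0)\geq a_1$ to ensure the denominators do not vanish.

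To finish, note that once $\Lambda(\tilde{\mathfrak{g}}[w])\leq 0$ in $\Omega_\delta$, Lemma \ref{lemma-yuan3-buchong1} forces $\lambda(\tilde{\mathfrak{g}}[w])$ to lie in a compact set $X\subset\{\lambda:\sum_\alpha\lambda_{(\alpha)}\leq 0\}$ disjoint from $\bar{\mathcal{P}}_{n-1}^{\inf_M\psi}:=\{\lambda\in\mathcal{P}_{n-1}:\log P_{n-1}(\lambda)\geq\inf_M\psi\}$, with separation controlled by $(\delta_{\psi,f})^{-1}$. Then the perturbation $h(z)=w(z)+\varepsilon(|z|^2-x_n/C_2)$ satisfies $\lambda(\tilde{\mathfrak{g}}[h])\notin\bar{\mathcal{P}}_{n-1}^{\inf_M\psi}$ together with $u\leq h$ on $\partial\Omega_\delta$, so the Caffarelli--Nirenberg--Spruck maximum principle \cite[Lemma B]{CNS3} gives $u\leq h$ throughout $\Omega_\delta$; differentiating in the inner normal direction at the origin with $u(0)=h(0)=\varphi(0)$ yields $(1-t_0)^{-1}\leq 1+\eta C_2/\varepsilon$. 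The main obstacle is the bookkeeping of first-order contributions through the $W[w]$ tensor; the identities \eqref{yuan3-buchong11}, \eqref{stw-44} and \eqref{lemma-B4} are exactly what is needed to reduce those contributions to a form that can be absorbed into $l(z)$, so the overall structure of the argument runs parallel to the scalar case treated in Section \ref{sec3}.
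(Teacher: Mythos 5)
Your proposal is essentially the paper's own proof. You correctly identify that $(\mathcal{P}_{n-1})_\infty$ is the half-space $\{\lambda':\sum_\alpha\lambda'_\alpha>0\}$ so the supporting multipliers are forced to be $\mu_\alpha=1/(n-1)$ (the paper encodes this by defining $\Lambda$ as the unweighted tangential trace), you derive the same sign condition \eqref{yuan3-a2}, construct the same barrier $w$ via Lemma \ref{key-lemma-B2} using the cancellation identities \eqref{yuan3-buchong11}, \eqref{stw-44}, \eqref{lemma-B4} for the $W$-terms, and close with the same $h$-perturbation and \cite[Lemma~B]{CNS3} comparison to read off $(1-t_0)^{-1}\leq 1+\eta C_2/\epsilon$. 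The only slips are cosmetic: the linear coefficients $l_\alpha$ should be chosen as in \eqref{chosen-2}, not \eqref{chosen-1}, and the paper takes the harmless normalization $t_0>1/2$ rather than $t_0>0$.
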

  In what follows we assume $t_0>\frac{1}{2}.$ Since $\eta$ has a uniform upper bound, thus at origin
  \begin{equation}
  \label{yuan3-a2}
  -\sum_{\alpha=1}^{n-1}\sigma_{\alpha\bar\alpha}(0)\geq {t_0\sum_{\alpha=1}^{n-1}\underline{\tilde{\mathfrak{g}}}_{\alpha\bar\alpha}(0)}/{\eta}\geq  {\sum_{\alpha=1}^{n-1}\underline{\tilde{\mathfrak{g}}}_{\alpha\bar\alpha}(0)}/{2\eta}\geq a_2
  \end{equation}
  where $$a_2= {\inf_{z\in\partial M}\sum_{\alpha=1}^{n-1}\underline{\tilde{\mathfrak{g}}}_{\alpha\bar\alpha}(z)}/{2\sup_{\partial M}|\nabla (u-\underline{u})|}.$$
%We know on $\Omega_\delta$, \[\sigma=x_n+O(|z|^2).\]
 As in Section \ref{sec3} we set on $\Omega_\delta$
\begin{equation}
    \begin{aligned}
    d(z)=\sigma(z)+\tau |z|^2 \nonumber
    \end{aligned}
    \end{equation}
    where $\tau$ is a positive constant %with $\tau<a_1/4$ and
     to be determined; and let 
    \begin{equation}
    \begin{aligned}
    w(z)=\underline{u}(z)+({\eta}/{t_0})\sigma(z)+l(z)\sigma(z)+Ad(z)^2. \nonumber
    \end{aligned}
    \end{equation}
    where $A$ is a positive constant to be determined, and $l(z)=\sum_{i=1}^n(l_iz_i+\bar l_{ i} \bar z_{i})$ 
    %is a real-valued linear function, 
    where $l_i\in \mathbb{C}$, $\bar l_i=l_{\bar i}$ to be chosen as in \eqref{chosen-2}.

 As in Section \ref{sec3}, let $T_\alpha=\sum_{k=1}^nT_\alpha^k\frac{\partial }{\partial z_k}$  be an orthonormal basis of holomorphic tangent space of level 
 hypersurface $\{w: d(w)=d(z)\}$ at $z$, $1\leq\alpha\leq n-1$, such that at origin $T_\alpha(0)=\frac{\partial}{\partial z_\alpha}$.
 Let's define a local operator $\Lambda$: For a real $(1,1)$-form $\Theta=\sqrt{-1}\Theta_{i\bar j}dz_i\wedge d\bar z_j,$
     \begin{equation}
    \begin{aligned}
    \Lambda(\Theta)=\sum_{\alpha=1}^{n-1} T_{\alpha}^i \bar T_{\alpha}^j \Theta_{i\bar j}.  \nonumber
    \end{aligned}
    \end{equation}
    
    \begin{lemma}
   \label{key-lemma-B2}
   There are parameters $\tau$, $\delta$, $A$ and %(local) linear function 
   $l(z)$ depending on %$(\delta_{\psi,f})^{-1}$, 
   $|u|_{C^0(M)}$, 
   $|\nabla u|_{C^0(\partial M)}$,  
   $|\underline{u}|_{C^2(M)}$, 
   %$\sup_{\bar M} (\mathrm{disc}(\lambda(\mathfrak{\underline{\tilde{g}}}),\partial\mathcal{P}_{n-1}))^{-1}$,  
   $\partial M$ up to third derivatives and other known data such that
    \begin{equation}
    \begin{aligned}
\,&   \Lambda(\tilde{\mathfrak{g}}[w]) \leq0 \mbox{ in } \Omega_\delta, \,& u\leq w \mbox{ on } \partial \Omega_\delta. \nonumber
    \end{aligned}
    \end{equation}
  %  \begin{equation}   \begin{aligned}
  %  u(z)-w(z)=-A\tau^2|z|^4;  \nonumber
  %  \end{aligned}  \end{equation}
    
    \end{lemma}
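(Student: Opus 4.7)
The plan is to adapt the barrier construction from Lemma~\ref{lemma-key2-yuan3} to the modified tensor $\tilde{\mathfrak{g}}[w]$, taking advantage of the fact that the asymptotic cone of $\mathcal{P}_{n-1}$ is the half-space $\{\lambda' : \sum_{\alpha} \lambda'_\alpha > 0\}$, whose supporting hyperplane at the boundary has all coefficients equal to one. This is exactly why the operator $\Lambda$ here weights the diagonal entries uniformly. The key new ingredient, compared with Section~\ref{sec3}, is that the $W$-terms must be handled, and for this the vanishing identity \eqref{lemma-B4} is decisive. The boundary comparison $u \leq w$ on $\partial\Omega_\delta$ follows verbatim from Section~\ref{sec3}: on $\partial M \cap \bar\Omega_\delta$ one has $\sigma \equiv 0$ so $u - w = -A\tau^2|z|^4 \leq 0$, and on $M \cap \partial B_\delta(0)$ the $-Ad^2$ term dominates once $A \gg |u|_{C^0(M)}$.

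Next I evaluate $\Lambda(\tilde{\mathfrak{g}}[w])$ at the origin. Since $T_\alpha(0) = \partial/\partial z_\alpha$ and $\sigma_\alpha(0) = 0 = d_\alpha(0)$ for $\alpha \leq n-1$, direct differentiation gives $w_{\alpha\bar\alpha}(0) = \underline{u}_{\alpha\bar\alpha}(0) + (\eta/t_0)\sigma_{\alpha\bar\alpha}(0)$. For the $W$-piece, $(w-\underline{u})_\alpha(0) = 0$ for $\alpha \leq n-1$, so \eqref{stw-44} yields $Z[w]_{n\bar n}(0) = Z[\underline{u}]_{n\bar n}(0)$, hence by \eqref{yuan3-buchong11} $\sum_\alpha W[w]_{\alpha\bar\alpha}(0) = \sum_\alpha W[\underline{u}]_{\alpha\bar\alpha}(0)$. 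Combined with \eqref{yuan3-buchong12} and the defining identity \eqref{t0-2} of $t_0$, this gives
\begin{equation*}
\Lambda(\tilde{\mathfrak{g}}[w])(0) = \sum_\alpha \underline{\tilde{\mathfrak{g}}}_{\alpha\bar\alpha}(0) + \frac{\eta}{t_0}\sum_\alpha \sigma_{\alpha\bar\alpha}(0) = 0.
\end{equation*}

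The third step is to Taylor-expand $\Lambda(\tilde{\mathfrak{g}}[w])(z)$ to first order, organised as in the three bullets of the proof of Lemma~\ref{lemma-key2-yuan3}. The contribution of $\tilde\chi + \underline{u}_{i\bar j} + (\eta/t_0)\sigma_{i\bar j} + W[\underline{u}+(\eta/t_0)\sigma]_{i\bar j}$ expands as $\sum_i(k_i z_i + \bar k_i \bar z_i) + O(|z|^2)$ for uniformly controlled constants $k_i$; the $Ad^2$ piece contributes $2Ad(z)\Lambda(d_{i\bar j})$, whose leading factor equals $\sum_\alpha\sigma_{\alpha\bar\alpha}(0) + (n-1)\tau \leq -a_2/2$ by \eqref{yuan3-a2} for small $\tau$, and whose $W$-contribution is $O(|z|)$ since $\nabla(d^2)(0) = 0$; the $l(z)\sigma$ piece reduces at leading order to $l(z)\sum_\beta \sigma_{\beta\bar\beta}(0) - \tau\sum_\alpha(l_\alpha z_\alpha + \bar l_\alpha \bar z_\alpha) + O(|z|^2)$, the extra $W$-piece being $O(|z|^2)$ since $l(z)\sigma$ has vanishing first derivatives at the origin. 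I then take, in analogy with \eqref{chosen-1},
\begin{equation*}
l_n = -\frac{k_n}{\sum_\beta \sigma_{\beta\bar\beta}(0)}, \qquad l_\alpha = -\frac{k_\alpha}{\sum_\beta \sigma_{\beta\bar\beta}(0) - \tau} \quad (1 \leq \alpha \leq n-1),
\end{equation*}
which are uniformly bounded by \eqref{yuan3-a2}. This cancels the linear-in-$z$ terms and leaves $\Lambda(\tilde{\mathfrak{g}}[w])(z) \leq -\frac{Aa_2}{2}d(z) + O(|z|^2)$, which is non-positive in $\Omega_\delta$ for $A \gg 1$ and $\tau, \delta \ll 1$.

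The main obstacle is controlling the $W$ tensor: since $W[v]$ is first order in $v$ and $w$ carries a normal derivative of order $\eta/t_0$ — precisely the quantity we are trying to bound — it is a priori possible for $W$ to dominate the expansion. What saves the argument is the structural identity \eqref{lemma-B4}, asserting that the coefficients of $w_n$ and $w_{\bar n}$ in $\sum_\alpha W[w]_{\alpha\bar\alpha}$ vanish at the origin. This is a feature special to the $(n-1)$-PSH framework, and it ensures that the potentially large normal-derivative piece of $w$ drops out, so that only tangential first derivatives appear; these are inherited from $\underline{u}$ and are therefore under uniform control.
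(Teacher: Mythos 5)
Your proposal follows the paper's route: verify $\Lambda(\tilde{\mathfrak{g}}[w])(0)=0$ via the definition \eqref{t0-2} of $t_0$ together with the vanishing identities, Taylor-expand, and tune $l(z)$ to kill the linear terms; the decisive structural input \eqref{lemma-B4} is correctly identified as the mechanism that removes the potentially large normal-derivative contribution from $W$. Your choice of $l_i$ matches \eqref{chosen-2} up to the bookkeeping of lumping the constants $k_i$ and $m_i$ together, which is harmless since $\eta/t_0 = -\sum_\alpha\underline{\tilde{\mathfrak{g}}}_{\alpha\bar\alpha}(0)/\sum_\alpha\sigma_{\alpha\bar\alpha}(0)$ is uniformly controlled.

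There are, however, two imprecisions in the $W$-expansion that would need repair. First, the claim that the $W$-contribution of the $l(z)\sigma$ piece is $O(|z|^2)$ is not correct. Writing $W[l\sigma]_{i\bar j}=\sigma\,(W^p_{i\bar j}l_p+W^{\bar q}_{i\bar j}l_{\bar q})+l(z)\,(W^p_{i\bar j}\sigma_p+W^{\bar q}_{i\bar j}\sigma_{\bar q})$, only the second summand is $O(|z|^2)$ (both factors are $O(|z|)$, using $\sigma_\beta(0)=0$ and \eqref{lemma-B4}); the first summand is $O(\sigma(z))$, and $\sigma(z)$ is genuinely of order $|z|$, not $|z|^2$, in the interior of $\Omega_\delta$. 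The vanishing of $\nabla(l\sigma)$ at the origin only gives $W[l\sigma]=O(|z|)$. This term must be estimated as $C_1\sigma(z)\leq C_1d(z)$ and absorbed by $-\tfrac{Aa_2}{2}d(z)$, as the paper does — it cannot be hidden in $O(|z|^2)$. Second, for the $Ad^2$ piece the claim $O(|z|)$ (justified by $\nabla(d^2)(0)=0$) is ambiguous: if taken at face value the implicit constant carries a factor of $A$, and a bare $O_A(|z|)$ term is not dominated by $-\tfrac{Aa_2}{2}d(z)$ (which is only $\gtrsim A\tau|z|^2$). One must instead factor $W[Ad^2]_{i\bar j}=2Ad(z)\,(W^p_{i\bar j}d_p+W^{\bar q}_{i\bar j}d_{\bar q})$ and observe that the parenthetical, not the product, is $O(|z|)$ because it vanishes at the origin — again by $\sigma_\beta(0)=0$ and \eqref{lemma-B4}, not merely by $\nabla(d^2)(0)=0$. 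With these corrections the proof closes as in the paper.
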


   \begin{proof}
  %  $ \check{\chi}$
  Direct computations give
    \begin{equation}
    \begin{aligned}
    w_i = \underline{u}_i + \frac{\eta}{t_0} \sigma_i +l_i\sigma +l(z)\sigma_i +2Add_i \nonumber
    \end{aligned}
    \end{equation}
   \begin{equation}
    \begin{aligned}
    w_{i\bar j}=
    \underline{u}_{i\bar j}+\frac{\eta}{t_0} \sigma_{i\bar j}+l(z)\sigma_{i\bar j}
    +(l_i\sigma_{\bar j}+\sigma_i l_{\bar j})
    +2Add_{i\bar j} +2A d_i d_{\bar j}.  \nonumber
    \end{aligned}
    \end{equation}
   Then
     \begin{equation}
    \begin{aligned}
    \Lambda(\tilde{\mathfrak{g}}[w])=\,& 
     \sum_{\alpha=1}^{n-1}T_\alpha^{i}\bar T_\alpha^j ( (\check{\chi}_{i\bar j}+ \underline{w}_{i\bar j}+W_{i\bar j}^pw_p + W_{i\bar j}^{\bar q}w_{\bar q}) \\
    =\,&   \sum_{\alpha=1}^{n-1}T_\alpha^{i}\bar T_\alpha^j  
    (\check{\chi}_{i\bar j}+ \underline{u}_{i\bar j}+W_{i\bar j}^p \underline{u}_p+W_{i\bar j}^{\bar q}\underline{u}_{\bar q}+\frac{\eta}{t_0} \sigma_{i\bar j})
    \\\,&
    +l(z) \sum_{\alpha=1}^{n-1}T_\alpha^{i}\bar T_\alpha^j \sigma_{i\bar j}
    +\sum_{\alpha=1}^{n-1}T_\alpha^{i}\bar T_\alpha^j (\sigma_i l_{\bar j}+l_i\sigma_{\bar j})
    \\ \,&
    +2Ad(z)\sum_{\alpha=1}^{n-1}T_\alpha^{i}\bar T_\alpha^j d_{i\bar j}
    +\frac{\eta}{t_0} \sum_{\alpha=1}^{n-1}T_\alpha^{i}\bar T_\alpha^j (W_{i\bar j}^p \sigma_p+ W_{i\bar j}^{\bar q} \sigma_{\bar q})\\\,&
      +l(z)\sum_{\alpha=1}^{n-1}T_\alpha^{i}\bar T_\alpha^j (W_{i\bar j}^p\sigma_p+W_{i\bar j}^{\bar q}\sigma_{\bar q})
     +\sum_{\alpha=1}^{n-1}T_\alpha^{i}\bar T_\alpha^j (W_{i\bar j}^pl_p+W_{i\bar j}^{\bar q}l_{\bar q})\sigma
          \\\,&
          +2Ad(z)\sum_{\alpha=1}^{n-1}T_\alpha^{i}\bar T_\alpha^j(W_{i\bar j}^pd_p+W_{i\bar j}^{\bar q}d_{\bar q}).  \nonumber
    \end{aligned}
    \end{equation}
    \begin{itemize}
 \item   At origin $z=0$, $T_{\alpha}^i=\delta_{\alpha i}$, so 
      \begin{equation}
    \begin{aligned}
    \,& \sum_{\alpha=1}^{n-1}T_\alpha^{i}\bar T_\alpha^j  
    (\check{\chi}_{i\bar j}+ \underline{u}_{i\bar j}+W_{i\bar j}^p \underline{u}_p+W_{i\bar j}^{\bar q}\underline{u}_{\bar q}+\frac{\eta}{t_0} \sigma_{i\bar j})(0)
    \\ =\,&\sum_{\alpha=1}^{n-1} \underline{\tilde{\mathfrak{g}}}_{\alpha\bar\alpha}(0)+\frac{\eta}{t_0}\sum_{\alpha=1}^{n-1}\sigma_{\alpha\bar \alpha}(0)=0.  \nonumber
    \end{aligned}
    \end{equation}
     Thus there are complex constants $k_i$ such that on $\Omega_\sigma$,
       \begin{equation}
    \begin{aligned}
   \,&  \sum_{\alpha=1}^{n-1}T_\alpha^{i}\bar T_\alpha^j  
    (\check{\chi}_{i\bar j}+ \underline{u}_{i\bar j}+W_{i\bar j}^p \underline{u}_p+W_{i\bar j}^{\bar q}\underline{u}_{\bar q}+\frac{\eta}{t_0} \sigma_{i\bar j}) \\
     =\,&\sum_{i=1}^n(k_iz_i+\bar k_i\bar z_i)+O(|z|^2).  \nonumber
    \end{aligned}
    \end{equation}
  \item  Next, we see
        \begin{equation}
    \begin{aligned}
    2A d(z)  \sum_{\alpha=1}^{n-1}T_\alpha^i\bar T_\alpha^{j} d_{i\bar j} \leq -\frac{Aa_2d(z)}{2},  \nonumber
    \end{aligned}
    \end{equation}
   provided $0<\delta, \tau\ll1$, since 
  \begin{equation}
    \begin{aligned}
    \sum_{\alpha=1}^{n-1}T_\alpha^i\bar T_\alpha^{j} d_{i\bar j}
    =\,& (\sum_{\alpha=1}^{n-1}T_\alpha^i\bar T_\alpha^{j}-\sum_{\alpha=1}^{n-1}T_\alpha^i\bar T_\alpha^{j} (0) )d_{i\bar j}
    +\sum_{\alpha=1}^{n-1}\sigma_{\alpha\bar\alpha}(z)+(n-1)\tau \\
    %\leq \,& %C\sigma(z)
   =\,& (n-1)\tau-a_2+O(|z|) \leq -\frac{a_2}{4} \nonumber
    \end{aligned}
    \end{equation}
    by  \eqref{yuan3-buchong2} and $\sum_{\alpha=1}^{n-1}\sigma_{\alpha\bar\alpha}(z)=\sum_{\alpha=1}^{n-1}\sigma_{\alpha\bar\alpha}(0)+O(|z|)$.
    
   \item   
  \begin{equation}
    \begin{aligned}
  \,& l(z) \sum_{\alpha=1}^{n-1}T_\alpha^{i}\bar T_\alpha^j \sigma_{i\bar j}
    +\sum_{\alpha=1}^{n-1}T_\alpha^{i}\bar T_\alpha^j (\sigma_i l_{\bar j}+l_i\sigma_{\bar j}) \\
    = \,&
    l(z)\sum_{\alpha=1}^{n-1}\sigma_{\alpha\bar\alpha}(0)
    + \tau\sum_{\alpha=1}^{n-1}  (z_\alpha l_\alpha+\bar z_{\alpha} \bar l_{\alpha})
    +O(|z|^2)  \nonumber
    \end{aligned}
    \end{equation}
    since by \eqref{yuan3-buchong2} and 
    $ \sum_{i=1}^n T_\alpha^i \sigma_i=-\tau\sum_{i=1}^n T_\alpha^i
\bar z_i $
one has
    \begin{equation}
    \begin{aligned}
      l(z) \sum_{\alpha=1}^{n-1}  T^i_\alpha \bar T^j_\alpha \sigma_{i\bar j} 
    =
    l(z) \sum_{\alpha=1}^{n-1}\mu_\alpha\sigma_{\alpha\bar\alpha}(0)
+O(|z|^2)  \nonumber
    \end{aligned}
    \end{equation}
    \[ \sum_{\alpha=1}^{n-1}T_\alpha^{i}\bar T_\alpha^j (\sigma_i l_{\bar j}+l_i\sigma_{\bar j})=-\tau\sum_{\alpha=1}^{n-1}  (z_\alpha l_\alpha+\bar z_{\alpha}  \bar l_{\alpha})
    +O(|z|^2).\]

\item %For $\frac{\eta}{t_0} \sum_{\alpha=1}^{n-1}T_\alpha^{i}\bar T_\alpha^j (W_{i\bar j}^p \sigma_p+ W_{i\bar j}^{\bar q} \sigma_{\bar q})$,  
At the origin, 
 \begin{equation}
    \begin{aligned}
   \,&  \sum_{\alpha=1}^{n-1}T_\alpha^{i}\bar T_\alpha^j (W_{i\bar j}^p \sigma_p+ W_{i\bar j}^{\bar q} \sigma_{\bar q}) (0)\\
   =\,&  \sum_{\alpha,\beta=1}^{n-1}  (W_{\alpha\bar \alpha}^\beta \sigma_\beta+ W_{\alpha\bar \alpha}^{\bar \beta} \sigma_{\bar \beta}) (0)  + \sum_{\alpha=1}^{n-1} (W_{\alpha\bar \alpha}^n \sigma_n+ W_{\alpha\bar \alpha}^{\bar n} \sigma_{\bar n})(0)=0, \nonumber
    \end{aligned}
    \end{equation}
   since $\sigma_\beta(0)=0$, and by  \eqref{lemma-B4}
     \[\sum_{\alpha=1}^{n-1}W_{\alpha\bar\alpha}^n(0)=0, \mbox{  }\sum_{\alpha=1}^{n-1}W_{\alpha\bar\alpha}^{\bar n}(0)=0.\]
    Thus on $\Omega_\sigma$, 
      \begin{equation}
    \begin{aligned}
    l(z)\sum_{\alpha=1}^{n-1}T_\alpha^{i}\bar T_\alpha^j (W_{i\bar j}^p\sigma_p+W_{i\bar j}^{\bar q}\sigma_{\bar q})(z)
    =O(|z|^2), \nonumber
    \end{aligned}
    \end{equation}
    and there are complex constants $m_i$ such that 
        \begin{equation}
    \begin{aligned}
    \frac{\eta}{t_0} \sum_{\alpha=1}^{n-1}T_\alpha^{i}\bar T_\alpha^j (W_{i\bar j}^p \sigma_p+ W_{i\bar j}^{\bar q} \sigma_{\bar q})(z)
    =\sum_{i=1}^n(m_iz_i+\bar m_i \bar z_i)+O(|z|^2).  \nonumber
    \end{aligned}
    \end{equation}
    \item Similarly 
    $\sum_{\alpha=1}^{n-1}T_\alpha^{i}\bar T_\alpha^j(W_{i\bar j}^pd_p+W_{i\bar j}^{\bar q}d_{\bar q})(0)=0$, thus on $\Omega_\delta$,
     $$\sum_{\alpha=1}^{n-1}T_\alpha^{i}\bar T_\alpha^j(W_{i\bar j}^pd_p+W_{i\bar j}^{\bar q}d_{\bar q})(z)=O(|z|)$$ and so
 \begin{equation}
    \begin{aligned}
    2Ad(z)\sum_{\alpha=1}^{n-1}T_\alpha^{i}\bar T_\alpha^j(W_{i\bar j}^pd_p+W_{i\bar j}^{\bar q}d_{\bar q})(z)=Ad(z)O(|z|). \nonumber
    \end{aligned}
    \end{equation}

    \item Finally
      \begin{equation}
    \begin{aligned}
    \sum_{\alpha=1}^{n-1}T_\alpha^{i}\bar T_\alpha^j (W_{i\bar j}^pl_p+W_{i\bar j}^{\bar q}l_{\bar q})\sigma(z) \leq C_1\sigma(z). \nonumber
    \end{aligned}
    \end{equation}
    
      \end{itemize}
    Therefore, we get
   \begin{equation}
    \begin{aligned}
    \Lambda(\tilde{\mathfrak{g}}[w])\leq \,& 
    2\mathfrak{Re}\sum_{\alpha=1}^{n-1}\left[z_\alpha\left(k_\alpha+m_\alpha+ l_\alpha\left(\sum_{\beta=1}^{n-1}\sigma_{\beta\bar\beta}(0)-\tau\right)\right)\right] \\
    \,&
     +2\mathfrak{Re} \left[z_n\left(k_n+m_n + l_n\sum_{\beta=1}^{n-1}\sigma_{\beta\bar\beta}(0))\right)\right] \\
     \,&
     -\frac{a_2 A d(z)}{2} +Ad(z)O(|z|)+C_1\sigma(z) + O(|z|^2).  \nonumber
    \end{aligned}
    \end{equation}
    We complete the proof if $0<\tau, \delta\ll1$, $A\gg1$, and we set 
     \begin{equation}
     \label{chosen-2}
    \begin{aligned}
    l_\alpha=-\frac{k_\alpha+m_\alpha}{\sum_{\beta=1}^{n-1}\sigma_{\beta\bar\beta}(0)-\tau} \mbox{ for } 1\leq \alpha\leq n-1, \mbox{ }   l_n=-\frac{k_n+m_n}{\sum_{\beta=1}^{n-1}\sigma_{\beta\bar\beta}(0)}.
    \end{aligned}
    \end{equation}
   We can see each $|l_i|$ is uniformly bounded, since $\sum_{\beta=1}^{n-1}\sigma_{\beta\bar\beta}(0)\leq -a_2<0$. 
    \end{proof}

\subsection{Proof of Lemma \ref{key-lemma-B1}}

Let $\lambda(\tilde{\mathfrak{g}}[w])=(\lambda_1[w],\cdots,\lambda_n[w])$, let  $\mu_i[w]=\sum_{j\neq i}\lambda_j[w]$ and we assume $\lambda_1[w]\leq \cdots\leq \lambda_n[w]$. 
Denote by
    \begin{equation}
    \begin{aligned}
    \mathcal{P}_{n-1}^{\inf_M\psi}=\left\{\lambda\in\mathcal{P}_{n-1}: \sum_{i=1}^n \log\mu_i\geq \inf_M \psi \right\}. \nonumber
    \end{aligned}
    \end{equation}
     
    As in section \ref{sec3} we set ${h}(z)=w(z)+\epsilon (|z|^2-\frac{x_n}{C_2})$ where $C_2$ be chosen so that $x_n\leq C_2 |z|^2$ on $\partial\Omega_\delta.$
    Thus
    \begin{equation}
    \begin{aligned}
    u\leq {h} \mbox{ on } \partial \Omega_\delta. \nonumber
    \end{aligned}
    \end{equation}
    Lemmas \ref{key-lemma-B2} and \ref{lemma-yuan3-buchong1} give
 \begin{equation}
    \begin{aligned}
    \sum_{\alpha=1}^{n-1}\lambda_\alpha[w]\leq 0 \mbox{ in } \Omega_\delta.  \nonumber
    \end{aligned}
    \end{equation}
    That is, in $\Omega_\delta$,
     \begin{equation}
    \begin{aligned}
    \lambda[w]\notin \mathcal{P}_{n-1}, \mbox{ i.e. } \mu[w]\notin\Gamma_n.  \nonumber
    \end{aligned}
    \end{equation}
    Then there is $0<\epsilon\ll1$ depending on $\delta_{\psi,f}$, $\lambda[w]$, torsion tensor and other known data such that
\begin{equation}
    \begin{aligned}
   \lambda[{h}] \notin \mathcal{P}_{n-1}^{\inf_M\psi}. \nonumber
    \end{aligned}
    \end{equation}
       %Moreover, on $\partial M\cap\bar\Omega_\sigma$
%\begin{equation}   \begin{aligned}
  %  u(z)-w(z)=-A\tau^2|z|^4;  \nonumber
  %  \end{aligned}  \end{equation}
  %  and on $M\cap\partial B_{\delta}(0)$,  $ u(z)-w(z)\leq  -\frac{A\tau^2 \delta^4}{2}$ if $A\gg1$.
  %  \begin{equation}   \begin{aligned}
  %  u(z)-w(z)   \leq  -\frac{A\tau^2 \delta^4}{2}.  \nonumber
   % \end{aligned} \end{equation}
      By \cite[Lemma B]{CNS3} again, we have $$u\leq {h} \mbox{ in }\Omega_\delta.$$
   Notice 
   $u(0)=\varphi(0)$ and ${h}(0)=\varphi(0)$, we have  $(u-{h})_{x_n}(0)\leq 0$ then   
    \begin{equation}
    \begin{aligned}
    (1-t_0)^{-1}\leq 1+\frac{\eta C_2}{\epsilon}.  \nonumber
    \end{aligned}
    \end{equation}

\subsection{Completion of proof of Proposition \ref{proposition-quar-yuan2}}

%  \begin{proof}
% [Proof of Proposition \ref{proposition1-normal}]
% Fix $x_0\in \partial M$. %In the proof   we use the notation and local coordinate \eqref{goodcoordinate1}.
Around $x_0$ we use the local holomorphic coordinates  %$(z_1,\cdots,z_n)$, $z_i=x_i+\sqrt{-1}y_i$, 
 we have chosen above;
%centered at $x_0$, such that $g_{i\bar j}(0)=\delta_{ij}$ and $\frac{\partial}{\partial x_n}$ is the inner normal vector at the origin.
% as used in proof of Proposition \ref{proposition-quar-yuan1},
furthermore, we assume that $ ({\tilde{\mathfrak{g}}}_{\alpha\bar\beta})$ is diagonal at the origin ($x_0=\{z=0\}$).
%and  $\frac{\partial}{\partial z_n}$ at $x_0$ is the unit inner normal vector. 
 In the proof the discussion is done at the origin, and 
 the Greek letters, such as $\alpha, \beta$, range from $1$ to $n-1$.
 %We now give the proof of Proposition \ref{proposition-quar-yuan1}.
Let  $\mu_i=\sum_{j\neq i} \lambda_j$, and 
 $$\tilde{f}(\mu)=f(\lambda)=\sum_{i=1}^n \log\mu_i.$$
Let's denote
\begin{equation}
{\tilde{A}}(R)=\left(
\begin{matrix}
R-\tilde{\mathfrak{{g}}}_{1\bar 1}&&  &-\tilde{\mathfrak{g}}_{1 \bar n}\\
%&R-\tilde{\mathfrak{{g}}}_{22}&& &-\tilde{\mathfrak{g}}_{2n}\\
&\ddots&&\vdots \\
& &  R-\tilde{\mathfrak{{g}}}_{{(n-1)} \overline{(n-1)}}&- \tilde{\mathfrak{g}}_{(n-1) \bar n}\\
-\tilde{\mathfrak{g}}_{n \bar 1}&\cdots& -\tilde{\mathfrak{g}}_{n \overline{(n-1)}}& \sum_{\alpha=1}^{n-1}\tilde{\mathfrak{g}}_{\alpha \bar\alpha}  \nonumber
\end{matrix}
\right).
\end{equation}
%\begin{equation}\tilde{\underline{A}}(R)=\left(\begin{matrix}R-\tilde{\mathfrak{{g}}}_{1\bar 1}&&  &-\tilde{\mathfrak{g}}_{1\bar n}\\&\ddots&&\vdots \\ & & R-\tilde{\mathfrak{{g}}}_{{(n-1)}  \overline{(n-1)}}&- \tilde{\mathfrak{g}}_{(n-1) \bar n}\\ -\tilde{\mathfrak{g}}_{n \bar1}&\cdots& -\tilde{\mathfrak{g}}_{n \overline{(n-1)}}& \sum_{\alpha=1}^{n-1}\underline{\tilde{\mathfrak{g}}}_{\alpha\bar\alpha}  \nonumber\end{matrix}\right),\end{equation}
In particular, when $R=\mathrm{tr}_\omega (\tilde{\mathfrak{{g}}})$, ${\tilde{A}}(R)=\mathrm{tr}_\omega(\mathfrak{\tilde{g}}) \omega-\mathfrak{\tilde{g}}$.
By \eqref{lemma-B5} and \eqref{t0-2}, $\tilde{{A}}(R)$ can be rewritten as
\begin{equation}
\tilde{\underline{A}}(R)=\left(
\begin{matrix}
R-\tilde{\mathfrak{{g}}}_{1\bar 1}&&  &-\tilde{\mathfrak{g}}_{1\bar n}\\
%&R-\tilde{\mathfrak{{g}}}_{22}&& &-\tilde{\mathfrak{g}}_{2n}\\
&\ddots&&\vdots \\
& & R-\tilde{\mathfrak{{g}}}_{{(n-1)}  \overline{(n-1)}}&- \tilde{\mathfrak{g}}_{(n-1) \bar n}\\
-\tilde{\mathfrak{g}}_{n \bar1}&\cdots& -\tilde{\mathfrak{g}}_{n \overline{(n-1)}}& (1-t_0)\sum_{\alpha=1}^{n-1}\underline{\tilde{\mathfrak{g}}}_{\alpha\bar\alpha} \nonumber
\end{matrix}
\right).
\end{equation}
That is $\tilde{\underline{A}}(R)={\tilde{A}}(R).$
%In particular, $\tilde{A}(\mathrm{tr}_\omega(\tilde{\mathfrak{g}}))=\left(U_{i\bar j}([u])\right)$.
%$B(\mathrm{tr}_\omega(\tilde{\mathfrak{\underline{g}}}))=\left(U_{i\bar j}([\underline{u}])\right)$.
%We also denote eigenvalues of $(n-1)\times (n-1)$ matrix $\left(\tilde{\mathfrak{\underline{g}}}_{\alpha \bar\beta}\right)$ by $\underline{\lambda}'=(\underline{\lambda}'_1,\cdots, \underline{\lambda}'_{n-1})$.
Similar as before, there is a uniform positive constant $R_0$ depending on $(1-t_0)^{-1}$ and $(\inf_{\partial M}\mathrm{dist}(\lambda(\mathfrak{\tilde{g}}),\partial \Gamma_n))^{-1}$ (but not on $(\delta_{\psi,f})^{-1}$) such that
%$$\tilde{f}\left(R_1-\underline{\lambda}'_1,\cdots, R_1-\underline{\lambda}'_{n-1},(1-t_0)\sum_{\alpha=1}^{n-1}\underline{\tilde{\mathfrak{g}}}_{\alpha \bar\alpha}\right) > %f(\lambda(U[\underline{u}]))\geq 
%\psi, $$
%and $(R_1-\underline{\lambda}'_1,\cdots, R_1-\underline{\lambda}'_{n-1}, (1-t_0)\sum_{\alpha=1}^{n-1}\underline{\tilde{\mathfrak{g}}}_{\alpha\bar\alpha})\in\Gamma_n$  for %positive constant  $R_1>0$ 
$$\tilde{f}\left(R_0,\cdots, R_0,(1-t_0)\sum_{\alpha=1}^{n-1}\underline{\tilde{\mathfrak{g}}}_{\alpha \bar\alpha}\right) > %f(\lambda(U[\underline{u}]))\geq 
\psi.$$
%and $(R_1-\underline{\lambda}'_1,\cdots, R_1-\underline{\lambda}'_{n-1}, (1-t_0)\sum_{\alpha=1}^{n-1}\underline{\tilde{\mathfrak{g}}}_{\alpha\bar\alpha})\in\Gamma_n$  
%for a positive constant  $R_1>0$ depending only on $\tilde{\mathfrak{\underline{g}}}$.
Therefore, there is a positive constant $\varepsilon_{0}$, %$R_{0}$
 depending  on $\inf_{\partial M}\mathrm{dist}(\lambda(\mathfrak{\tilde{g}}),\partial \Gamma_n)$, %$\tilde{\mathfrak{\underline{g}}}$ and $\tilde{f}$, 
 such that $(R_0-\varepsilon_{0},\cdots, R_0-\varepsilon_{0},(1-t_0)\sum_{\alpha=1}^{n-1}\underline{\tilde{\mathfrak{g}}}_{\alpha\bar\alpha}-\varepsilon_{0})\in \Gamma_n$,  
\begin{equation}
\label{opppp-Gauduchon}
%\left\{
\begin{aligned}
  \tilde{f}(R_0-\varepsilon_{0},\cdots, R_0-\varepsilon_{0},(1-t_0)\sum_{\alpha=1}^{n-1}\underline{\tilde{\mathfrak{g}}}_{\alpha\bar\alpha}-\varepsilon_{0})\geq   \psi.
\end{aligned}
%\right.
\end{equation}

Note that 
\begin{equation}
{\tilde{A}}(R)={\tilde{\underline{A}}}(R)=RI_n-\left(
\begin{matrix}
\tilde{\mathfrak{{g}}}_{1\bar 1}&&  &\tilde{\mathfrak{g}}_{1 \bar n}\\
%&R-\tilde{\mathfrak{{g}}}_{22}&& &-\tilde{\mathfrak{g}}_{2n}\\
&\ddots&&\vdots \\
& &  \tilde{\mathfrak{{g}}}_{{(n-1)} \overline{(n-1)}}& \tilde{\mathfrak{g}}_{(n-1) \bar n}\\
\tilde{\mathfrak{g}}_{n \bar 1}&\cdots& \tilde{\mathfrak{g}}_{n \overline{(n-1)}}&
R-(1-t_0) \sum_{\alpha=1}^{n-1}\tilde{\mathfrak{\underline{g}}}_{\alpha \bar\alpha}  \nonumber
\end{matrix}
\right).
\end{equation}
here $I_n= \left(\delta_{ij}\right)$.
  Let's pick  $\epsilon=\frac{\varepsilon_0(1-t_0)}{2(n-1)}$ in Lemma \ref{yuan's-quantitative-lemma} and set
\begin{equation}
\begin{aligned}
 R_s=\,& \frac{2(n-1)(2n-3)}{\varepsilon_0(1-t_0)}
\sum_{\alpha=1}^{n-1} | \tilde{\mathfrak{g}}_{\alpha \bar n}|^2
+ (n-1)\sum_{\alpha=1}^{n-1} | \tilde{\mathfrak{{g}}}_{\alpha \bar\alpha}| 
 %   +\frac{(n-2)\varepsilon_0}{2(n-1)(2n-3)} 
  %  \\\,&
    + (1-t_0)\sum_{\alpha=1}^{n-1} | \tilde{\mathfrak{\underline{g}}}_{\alpha \bar\alpha}|
% +\sum_{\alpha=1}^{n-1} |\underline{\lambda}'_\alpha| 
 +R_0, \nonumber
\end{aligned}
\end{equation}
where $\varepsilon_0$ and $R_0$ are fixed constants so that \eqref{opppp-Gauduchon} holds.
 Let $\lambda(\tilde{\underline{A}}(R_s))=(\lambda_1(R_s),\cdots,\lambda_n(R_s))$ be the eigenvalues of $\tilde{\underline{A}}(R_s)$.
 %(possibly with an order) shall behavior like
 It follows from  Lemma  \ref{yuan's-quantitative-lemma}   
 that 
\begin{equation}
\label{lemma12-yuan-Gauduchon}
\begin{aligned}
%\lambda(\tilde{\underline{A}}(R_s)) \in(R_s- \tilde{\mathfrak{g}}_{1\bar 1}-\frac{\varepsilon_0}{2(n-1)},\cdots, R_s- \tilde{\mathfrak{g}}_{(n-1)  \overline{(n-1)}}-\frac{\varepsilon_0}{2(n-1)},\sum_{\alpha=1}^{n-1}\tilde{\mathfrak{\underline{g}}}_{\alpha\bar \alpha}-\frac{\varepsilon_0}{2}) +\overline{ \Gamma}_n \subset \Gamma.
\lambda_\alpha(R_s) \geq \,& R_s- \tilde{\mathfrak{g}}_{1\bar 1}-\frac{\varepsilon_0}{2(n-1)},
\mbox{  } \forall 1\leq \alpha<n, \\
\lambda_n(R_s) \geq \,& (1-t_0)\sum_{\alpha=1}^{n-1}\tilde{\mathfrak{\underline{g}}}_{\alpha\bar \alpha}
-\frac{\varepsilon_0}{2}.  \nonumber
\end{aligned}
\end{equation}
%in particular, $\lambda(\tilde{\underline{A}}(R_s))\in\Gamma$. 
Therefore % $\lambda(\tilde{{A}}(R_s))\in \Gamma$ and % one has 
 \begin{equation}
\label{puretangential2-gauduchon}
\begin{aligned}
\tilde{f}(\lambda(\tilde{{A}}(R_s)))\geq \psi. \nonumber
\end{aligned}
\end{equation}
 %Putting \eqref{elliptic},  \eqref{opppp-Gauduchon}, \eqref{puretangential2-gauduchon} and \eqref{lemma12-yuan-Gauduchon} together,  
  We get
   $$\mathrm{tr}_\omega(\tilde{\mathfrak{g}}) \leq R_s.$$
   
     Consequently,  together with Lemma \ref{key-lemma-B1} and
    Proposition \ref{mix-general-2}, we derive Proposition \ref{proposition-quar-yuan2} and so the following quantitative boundary estimate.
   \begin{theorem}
   Under the assumptions of  Theorem \ref{thm0-n-1-yuan3}, for any $(n-1)$-PSH function $u\in C^3(M)\cap C^2(\bar M)$ solving the Dirichlet problem \eqref{MA-n-1} and \eqref{bdy-value-2}, we have
   \[\sup_{\partial M}\Delta u\leq C(1+|\nabla u|^2).\]
   \end{theorem}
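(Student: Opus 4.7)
The plan is to establish this boundary gradient-of-Hessian estimate by decomposing $\Delta u$ at each $x_0 \in \partial M$ into tangential-tangential, tangential-normal, and normal-normal components in the local holomorphic coordinates chosen in Section \ref{sec4}, then bounding each component using the three preceding results: Proposition \ref{mix-general-2} for mixed derivatives, Proposition \ref{proposition-quar-yuan2} (which in turn uses Lemma \ref{key-lemma-B1}) for the double normal direction, and the Dirichlet data together with \eqref{yuan3-buchong5} for the pure tangential directions.

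First I would work pointwise at $x_0 \in \partial M$. In the coordinates with $g_{i\bar j}(0)=\delta_{ij}$ and $\xi_n=\tfrac{1}{\sqrt{2}}(\nu-\sqrt{-1}J\nu)$, write
\[\Delta u(x_0)=\sum_{\alpha=1}^{n-1}u_{\alpha\bar\alpha}(x_0)+u_{n\bar n}(x_0).\]
For the tangential block, \eqref{yuan3-buchong5} gives $u_{\alpha\bar\beta}(0)=\underline{u}_{\alpha\bar\beta}(0)+(u-\underline{u})_{x_n}(0)\sigma_{\alpha\bar\beta}(0)$, and by \eqref{key-14-yuan3} the normal derivative $(u-\underline{u})_{x_n}(0)$ is a priori controlled, so each $|u_{\alpha\bar\alpha}(x_0)|\leq C$. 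For the mixed piece, Proposition \ref{mix-general-2} yields $|\nabla^2 u(X,\nu)|\leq C(1+\sup_M|\nabla u|)$ for tangential unit $X$, hence
\[|u_{\alpha\bar n}(x_0)|^2\leq C\bigl(1+\sup_M|\nabla u|^2\bigr), \qquad 1\leq \alpha\leq n-1,\]
after converting the complex derivatives $u_{\alpha\bar n}$ into real tangential-normal Hessian entries via $\xi_n=(1/\sqrt{2})(\nu-\sqrt{-1}J\nu)$.

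The main obstacle, and the heart of the argument, is the double normal term $u_{n\bar n}(x_0)$. Here I would invoke Proposition \ref{proposition-quar-yuan2}:
\[\tilde{\mathfrak{g}}(\xi_n,J\bar\xi_n)(x_0)\leq C\Bigl(1+\sum_{\alpha=1}^{n-1}|\tilde{\mathfrak{g}}(\xi_\alpha,J\bar\xi_n)(x_0)|^2\Bigr).\]
Since $\tilde{\mathfrak{g}}_{i\bar j}=u_{i\bar j}+\tilde{\chi}_{i\bar j}+W_{i\bar j}$ where \eqref{Z-tensor1} shows $W$ is linear in $\nabla u$ with smooth coefficients (so $|W_{i\bar j}|\leq C(1+|\nabla u|)$), one gets
\[|u_{n\bar n}(x_0)|\leq \tilde{\mathfrak{g}}_{n\bar n}(x_0)+C(1+|\nabla u|) \leq C\Bigl(1+\sum_{\alpha=1}^{n-1}|u_{\alpha\bar n}(x_0)|^2+|\nabla u|^2\Bigr),\]
where in the last step I use the same identity to convert the mixed $\tilde{\mathfrak{g}}$ entries back to mixed second derivatives of $u$ modulo $O(|\nabla u|)$.

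Combining the three bounds and taking the supremum over $x_0\in\partial M$ yields $\sup_{\partial M}\Delta u\leq C(1+\sup_M|\nabla u|^2)$, as desired. The genuinely new technical work is already inside Proposition \ref{proposition-quar-yuan2} and Lemma \ref{key-lemma-B1} — specifically, the barrier construction of Lemma \ref{key-lemma-B2} together with the choice \eqref{chosen-2} of the linear correction $l(z)$, which is what removes the restriction \eqref{mean-pseudoconcave1} on the boundary geometry. The final assembly above is routine once those pieces are in place.
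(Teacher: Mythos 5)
Your decomposition into tangential-tangential (from the boundary data and \eqref{yuan3-buchong5}), tangential-normal (Proposition \ref{mix-general-2}), and normal-normal (Proposition \ref{proposition-quar-yuan2}, which carries Lemma \ref{key-lemma-B1} inside it) components is exactly how the paper derives this estimate, and your conversion between $\tilde{\mathfrak{g}}_{i\bar j}$ and $u_{i\bar j}$ modulo $O(1+|\nabla u|)$ terms coming from $W[u]$ being linear in $\nabla u$ is the right way to pass between the two. The proposal is correct and follows the same route as the paper, which states this final assembly without elaboration.
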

   
   \subsection{Further discussion}
 The results above are valid for  more general equations % \eqref{mainequ-gauduchon-general*},
\begin{equation}
\label{mainequ-gauduchon-general**}
\begin{aligned}
 f(\lambda(*\Phi[u]))=\psi \mbox{ in } M,     \mbox{   }
u=\varphi \mbox{ on }   \partial M % \nonumber
\end{aligned}
\end{equation}
on compact Hermitian manifolds with smooth boundary, %\textit{mean pseudoconcave} boundary.
%$U[u]=\chi+ (\Delta u)\omega-\sqrt{-1}\partial\overline{\partial}u +\varrho Z(\partial u,\overline{\partial}u) =*\Phi$ for
% In equation  \eqref{mainequ-gauduchon-general*},
 where $$*\Phi[u]=\chi+\Delta u \omega-\sqrt{-1}\partial\overline{\partial}u+\varrho Z[u],$$  
 and $\varrho$ is a smooth function, i.e.
 $$\Phi[u] =*\chi+\frac{1}{(n-2)!}\sqrt{-1}\partial\overline{\partial}u\wedge\omega^{n-2}+\frac{\varrho}{(n-1)!}\mathfrak{Re}(\sqrt{-1}\partial u\wedge \overline{ \partial}\omega^{n-2}).$$
  In addition to \eqref{elliptic}, \eqref{concave}, \eqref{addistruc}, we further assume
    \begin{equation}
    \label{unbounded-2}
    \begin{aligned}
\,&  \lim_{t\rightarrow+\infty}  f(\lambda_1+t,\cdots,\lambda_{n-1}+t,\lambda_n)=\sup_\Gamma f, \,& \forall \lambda\in\Gamma.
    \end{aligned}
    \end{equation}

 The case $\Gamma\neq\Gamma_n$ is completely solved in \cite{yuan2019},  since $\Gamma_{\mathbb{R}^1}^\infty=\mathbb{R}$ for $\Gamma\neq\Gamma_n$,
 where $$\Gamma_{\mathbb{R}^1}^\infty=\{c\in\mathbb{R}: (c,R,\cdots,R)\in\Gamma \mbox{ for some } R>0\}.$$
  Thus it requires only to consider the case $\Gamma=\Gamma_n$.
 \begin{theorem}
 Let $(M,J,\omega)$ be a compact Hermitian manifold  with smooth boundary,  we assume $\varphi$, $\psi$ are all smooth and satisfies \eqref{nondegenerate}.
 Suppose $f$ satisfies \eqref{elliptic}, \eqref{concave}, \eqref{addistruc} and \eqref{unbounded-2}. Suppose there is a $C^{2,1}$ subsolution with 
 \begin{equation}
\begin{aligned}
 f(\lambda(*\Phi[\underline{u}]))\geq\psi, \mbox{ } *\Phi[\underline{u}]>0 \mbox{ in } \bar M,     \mbox{   }
\underline{u}=\varphi \mbox{ on }   \partial M. % \nonumber
\end{aligned}
\end{equation}
 Then Dirichlet problem \eqref{mainequ-gauduchon-general**} admits a unique smooth solution with $*\Phi[u]>0$ in $\bar M$.
 \end{theorem}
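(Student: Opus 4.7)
The plan is to treat the generalized equation \eqref{mainequ-gauduchon-general**} in complete parallel with how Theorem \ref{thm0-n-1-yuan3} is handled in Section \ref{sec4}, viewing the extra torsion-type term $\varrho Z[u]$ as an absorbable perturbation of the $(n-1)$-PSH operator, and then closing the argument by the standard continuity method with a priori $C^{2,\alpha}$ estimates. As the excerpt itself observes, the case $\Gamma \neq \Gamma_n$ is already solved in \cite{yuan2019} because $\Gamma_{\mathbb{R}^1}^\infty = \mathbb{R}$ there; therefore I would reduce at once to $\Gamma=\Gamma_n$ and work only in this cone.

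First I would collect the standard estimates. The $C^0$ estimate and the boundary gradient estimate follow from the comparison $\underline{u}\le u\le \check u$ with $\check u$ a solution of a linear auxiliary problem as in \eqref{supersolution-1}. Interior gradient bounds on general Hermitian manifolds are not accessible directly, so I would invoke the blow-up argument of Dinew-Ko\l odziej, as extended by Sz\'ekelyhidi \cite{Gabor}, combined with the Liouville-type theorem and the second-order estimate $\sup_M \Delta u \le C(1+\sup_M|\nabla u|^2+\sup_{\partial M}|\Delta u|)$; since $\varrho Z[u]$ is linear in $\nabla u$, the same Hou-Ma-Wu type maximum principle argument applies with only cosmetic modifications, producing the quantitative second order interior estimate \eqref{quantitative-2nd-boundary-estimate}.

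The main obstacle, as always, is the \emph{quantitative} second order boundary estimate $\sup_{\partial M}\Delta u \le C(1+\sup_M |\nabla u|^2)$. Tangential-normal derivatives are controlled by the $W$-free analogue of Proposition \ref{mix-general-2}: the proof there uses only the presence of the gradient terms $W_{i\bar j}^p u_p$ and $W_{i\bar j}^{\bar q}u_{\bar q}$, which are exactly of the form $\varrho Z[u]$ up to smooth coefficients, so the barrier construction carries over once one replaces $(n-1)Z$ by $\varrho Z$. For the double normal derivative I would reproduce the two-ingredient argument of Section \ref{sec3}: freeze $t_0$ as the first value at which $\lambda_{\omega'}(A_{t_0})$ hits $\partial \Gamma_\infty$ (with $\Gamma=\Gamma_n$ so $\Gamma_\infty=\Gamma_{n-1}$), apply Lemma \ref{yuan's-quantitative-lemma} to get the bound $\tilde{\mathfrak{g}}_{n\bar n}\le R_c$ once \eqref{unbounded-2} supplies the asymptotic room, and prove the analogue of Lemma \ref{keylemma1-yuan3} to bound $(1-t_0)^{-1}$. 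The barrier $w(z)=\underline u + (\eta/t_0)\sigma + l(z)\sigma + Ad(z)^2$ from Lemma \ref{key-lemma-B2} will again work: the new terms entering $\Lambda(\Phi[w])$ are of the form $\sum \mu_\alpha T_\alpha^i\bar T_\alpha^j(W_{i\bar j}^p w_p+W_{i\bar j}^{\bar q} w_{\bar q})$ with $W$ replaced by the $\varrho Z$ kernel, and the identity analogous to \eqref{yuan3-buchong11} forces the zeroth-order-in-$z$ contribution to vanish at $0$, while the $l_i$ are solved as in \eqref{chosen-2}. The key negative term $-\tfrac{Aa_1}{2}d(z)$ comes, as before, from the strictly negative linear functional $\sum \mu_\alpha\sigma_{\alpha\bar\alpha}(0)$ supplied by the support plane of $\Gamma_\infty=\Gamma_{n-1}$ at $\tilde\lambda'$.

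Once the quantitative boundary estimate is in hand, combining it with the interior estimate gives full $C^2$ control, and then Evans-Krylov together with Schauder theory yields $C^{2,\alpha}$ and then $C^\infty$ bounds. I would then run the standard method of continuity $t\mapsto u^t$ deforming from the subsolution $\underline u$ (or from a solution of a model linearized problem) to the target equation: openness follows from ellipticity and concavity via the implicit function theorem in $C^{2,\alpha}$-admissible functions, and closedness from the uniform a priori estimates just established. Uniqueness follows from the maximum principle applied to the difference of two admissible solutions using \eqref{elliptic} and \eqref{concave}. The only genuinely new work is therefore the adaptation of Sections \ref{sec3} and \ref{sec4} to the operator $*\Phi$; the verification that the gradient-dependent term $\varrho Z[u]$ does not destroy the cancellation structure at the origin is the step I expect to require the most care.
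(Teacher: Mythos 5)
Your high-level strategy --- reduce to $\Gamma=\Gamma_n$, verify the cancellation identities for the gradient term $\varrho Z$, build the barrier of Lemma~\ref{key-lemma-B2}, combine with the blow-up/Liouville argument and Evans--Krylov, then close by continuity --- correctly mirrors what the paper itself leaves implicit when it states ``The results above are valid for more general equations'' and points back to Section~\ref{sec4}. There is, however, one localized confusion worth straightening out. You propose to ``freeze $t_0$ as the first value at which $\lambda_{\omega'}(A_{t_0})$ hits $\partial\Gamma_\infty$ (with $\Gamma=\Gamma_n$ so $\Gamma_\infty=\Gamma_{n-1}$)'' and to ``apply Lemma~\ref{yuan's-quantitative-lemma} to get the bound $\tilde{\mathfrak{g}}_{n\bar n}\le R_c$.'' This imports the Section~\ref{sec3} setup (operator $\chi+\sqrt{-1}\partial\dbar u$, cone $\Gamma$, projection $\Gamma_\infty$) into a problem that does not support it: the tangential block of $*\Phi[u]$ is $\chi_{\alpha\bar\beta}+(\Delta u)g_{\alpha\bar\beta}-u_{\alpha\bar\beta}+\varrho Z_{\alpha\bar\beta}$, which contains $u_{n\bar n}$, so the interpolation $A_t$ along tangential directions of $*\Phi$ is not a matrix of known boundary data, and the decomposition cannot even be set up. What works --- and what Section~\ref{sec4} actually does --- is to pass to $\tilde{\mathfrak{g}}$ with cone $\mathcal{P}_{n-1}$; there the projection cone is the half-space $\{\lambda':\sum_\alpha\lambda'_\alpha>0\}$ (not $\Gamma_{n-1}$), whose unique supporting functional is the unweighted trace, so there are no $\mu_\alpha$ weights to choose, $t_0$ is the single scalar of \eqref{t0-2}, and the output of Lemma~\ref{yuan's-quantitative-lemma} applied to $\tilde{A}(R)$ is a bound on $\mathrm{tr}_\omega\tilde{\mathfrak{g}}$ (equivalently $\Delta u$), not on $\tilde{\mathfrak{g}}_{n\bar n}$. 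Since you elsewhere correctly invoke $\Lambda(\tilde{\mathfrak{g}}[w])$, the cancellation \eqref{yuan3-buchong11}--\eqref{lemma-B4}, and the choice of $l_i$ in \eqref{chosen-2}, I read this as a slip in bookkeeping rather than a genuinely different or broken route; once the two formulations are disentangled, your proof and the paper's coincide.
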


    % \subsection*{Acknowledgement} The author is supported by the National Natural Science Foundation of China under grant 11801587.

    \begin{appendix}
   % \subsection{Preliminaries}

\section{Key lemmas}   \label{appendix1}

    The  following two lemmas proposed in earlier works \cite{yuan2017,yuan2019} are key ingredients in proof of Propositions \ref{proposition-quar-yuan1} and \ref{proposition-quar-yuan2}. 
    
    \subsection{A characterization of concave function satisfying \eqref{addistruc}}
     \begin{lemma} 
[{\cite{yuan2019}}]
\label{asymptoticcone1}
If $f$ satisfies \eqref{elliptic} and \eqref{concave}, then the following three statements are equivalent each other.
\begin{itemize}
 \item[$\mathrm{\bf (a)}$] f satisfies \eqref{addistruc}.
 \item[$\mathrm{\bf (b)}$]  $\sum_{i=1}^n f_i(\lambda)\mu_i>0 \mbox{ for any } \lambda, \mu\in \Gamma$. 
% In particular $\sum_{i=1}^n f_i(\lambda)\lambda_i>0.$
 % \item  For each $\epsilon>0$ such that $\lambda-\epsilon\vec{\bf 1}\in \Gamma$, $\sum_{i=1}^n f_i(\lambda)(\lambda_i-\epsilon)\geq 0$. 
\end{itemize}
\end{lemma}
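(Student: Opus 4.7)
The plan is to establish the equivalence of (a) and (b); the two main ingredients are a convex-combination identity inside the open convex cone $\Gamma$ and the concavity of $f$.

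First I would prove the auxiliary fact that if (a) holds, then $\lim_{t\to+\infty}f(\lambda+t\mu)=\sup_{\Gamma}f$ for every $\lambda,\mu\in\Gamma$. Writing
\[
\lambda+t\mu=(1-s)\,\tfrac{\lambda}{1-s}+s\,\tfrac{t\mu}{s}, \qquad s\in(0,1),
\]
as a convex combination of two points of $\Gamma$, concavity gives
\[
f(\lambda+t\mu)\ \geq\ (1-s)\,f\!\left(\tfrac{\lambda}{1-s}\right)+s\,f\!\left(\tfrac{t\mu}{s}\right).
\]
Sending $t\to+\infty$ and using (a) on the ray through $\mu$, the second term tends to $s\sup_{\Gamma}f$; then sending $s\to 1^-$ and invoking (a) on the ray through $\lambda$, the first term vanishes while the second converges to $\sup_{\Gamma}f$, yielding the claim.

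With this in hand, (a)$\Rightarrow$(b) is immediate: if $\sum_i f_i(\lambda)\mu_i\leq 0$ for some $\lambda,\mu\in\Gamma$, concavity at $\lambda$ gives
\[
f(\lambda+t\mu)\ \leq\ f(\lambda)+t\sum_i f_i(\lambda)\mu_i\ \leq\ f(\lambda)<\sup_{\Gamma}f,
\]
contradicting the auxiliary limit.

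For (b)$\Rightarrow$(a), fix $\lambda\in\Gamma$ and set $h(t)=f(t\lambda)$. Applying (b) with $\mu=t\lambda$ yields $h'(t)=\sum_i f_i(t\lambda)\lambda_i=t^{-1}\sum_i f_i(t\lambda)(t\lambda)_i>0$, so $h$ is strictly increasing and converges to some $c\leq\sup_{\Gamma}f$. Suppose for contradiction $c<\sup_{\Gamma}f$, and pick $\mu\in\Gamma$ with $f(\mu)>c$. The convex combination $t\lambda=s\mu+(1-s)y_t$ with $y_t=(t\lambda-s\mu)/(1-s)\in\Gamma$ (valid for $s\in(0,1)$ fixed and $t$ large, since $y_t/t\to\lambda/(1-s)\in\Gamma$) together with concavity gives $f(t\lambda)\geq s\,f(\mu)+(1-s)\,f(y_t)$. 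The crux, and the main obstacle, is to show $f(y_t)\to c$ as $t\to+\infty$: since $y_t$ escapes to infinity in the same direction as $\lambda$, one combines the concavity and boundedness of $h$ (which force $h'(t)\to 0$) with a gradient-inequality control on the perturbation $s\mu/(1-s)$ to compare $f(y_t)$ with $f(\tfrac{t}{1-s}\lambda)$, both having limit $c$. Passing to the limit in the inequality then yields $c\geq s\,f(\mu)+(1-s)c$, i.e., $c\geq f(\mu)$, contradicting $f(\mu)>c$. Hence $c=\sup_{\Gamma}f$, which is (a).
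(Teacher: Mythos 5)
Your direction $\mathrm{\bf(a)}\Rightarrow\mathrm{\bf(b)}$ is correct but takes a noticeably longer route than the paper's. The paper applies the concavity inequality \eqref{concave1} directly: for fixed $\lambda$, choose $t>T(\mu)$ so large that $f(t\mu)>f(\lambda)$ (by \eqref{addistruc}), obtaining $\sum_i f_i(\lambda)(t\mu_i-\lambda_i)\geq f(t\mu)-f(\lambda)>0$; taking $\mu=\lambda$ yields $\sum_i f_i(\lambda)\lambda_i>0$, and the general case then follows from $t\sum_i f_i(\lambda)\mu_i>\sum_i f_i(\lambda)\lambda_i>0$. Your detour through the auxiliary limit $\lim_{t\to\infty}f(\lambda+t\mu)=\sup_\Gamma f$ works (the cases $\sup_\Gamma f<\infty$ and $=\infty$ need to be separated, and one should note $\sup_\Gamma f$ cannot be attained since $\nabla f\neq 0$ on the open cone), but it is strictly more machinery than needed.

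The direction $\mathrm{\bf(b)}\Rightarrow\mathrm{\bf(a)}$ has a genuine gap, which you yourself flag as ``the crux, and the main obstacle'': you need $\liminf_{t\to\infty}f(y_t)\geq c$, and this is precisely the nontrivial point. The concavity inequality used in the obvious direction, $f\bigl(\tfrac{t}{1-s}\lambda\bigr)\leq f(y_t)+\tfrac{s}{1-s}\sum_i f_i(y_t)\mu_i$, leaves you needing $\sum_i f_i(y_t)\mu_i\to 0$, which does not follow from $h'(t)\to 0$ (that gives decay of $\sum_i f_i\cdot\lambda_i$ along the ray, not of $\sum_i f_i\cdot\mu_i$ for a general $\mu\in\Gamma$); a concave bounded function can have large first derivative at fixed points, so ``gradient-inequality control'' needs to be made precise. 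One can in fact rescue the estimate by noting that $\Gamma$ open gives $\lambda-\epsilon\mu\in\Gamma$ for small $\epsilon>0$, whence (b) yields $\sum_i f_i(\kappa)\mu_i<\tfrac{1}{\epsilon}\sum_i f_i(\kappa)\lambda_i$ for every $\kappa\in\Gamma$, and the right side tends to zero along the ray — but this step is missing from your argument and is not minor. The paper avoids the issue entirely: since $\Gamma$ is open, $t\lambda-\mu\in\Gamma$ for $t\gg 1$; integrating $\mathrm{\bf(b)}$ along the segment from $\mu$ to $t\lambda$ (which lies in $\Gamma$) gives $f(t\lambda)>f(\mu)$, and since $t\mapsto f(t\lambda)$ is increasing by $\mathrm{\bf(b)}$, $\lim_{t\to\infty}f(t\lambda)\geq\sup_{\mu\in\Gamma}f(\mu)=\sup_\Gamma f$, which is \eqref{addistruc}.
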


    \begin{proof}

The concavity of $f$ means
 \begin{equation}\label{concave1}\begin{aligned}
\sum_{i=1}^n f_i(\lambda)(\mu_i-\lambda_i)\geq f(\mu)-f(\lambda) \mbox{ for $\lambda$, $\mu\in\Gamma$}
\end{aligned}\end{equation}

 $\mathrm{\bf (b)}\Rightarrow \mathrm{\bf (a)}$  
 For any $\lambda$, $\mu\in\Gamma$,  
    $t\lambda-\mu\in\Gamma$ for some $t\gg1$.
 Thus  $f(t\lambda)> f(\mu)$ for such $t$.
   
$\mathrm{\bf (a)}\Rightarrow \mathrm{\bf (b)}$ Fix $\lambda\in \Gamma$. The condition \eqref{addistruc} implies that for any  
$\mu \in \Gamma$, there is $T\geq1$ (may depend on $\mu$) such that for each $t>T$,
 $f(t\mu)>f(\lambda)$. Together with \eqref{concave1},
 one gets $\sum_{i=1}^n f_i(\lambda) (t\mu_i-\lambda_i)>0$. Thus, $\sum_{i=1}^nf_i(\lambda)\lambda_i>0$ (if one takes $\mu=\lambda$)
  then $\sum_{i=1}^n f_i(\lambda)\mu_i>0$.

 \end{proof}
 
  %Together with \cite[Lemma 6.2]{CNS3}, 
 Lemma \ref{asymptoticcone1} implies that for any $n\times n$ Hessian matrices 
 $A=(A_{i\bar j})$, $B=(B_{i\bar j})$ with $\lambda(A)\in \Gamma$ and $\lambda(B)\in\Gamma$, 
 \begin{equation}
 \label{key-01-yuan3}
    \begin{aligned}
    \frac{\partial F}{\partial A_{i\bar j}} (A)B_{i\bar j} >0
    \end{aligned}
    \end{equation}
    where we denote $F(A)=f(\lambda(A))$.
  Consequently, \eqref{key-01-yuan3} and \eqref{concave1} imply
 \begin{equation}
 \label{concavity2}
    \begin{aligned}
    %\frac{\partial F}{\partial A_{i\bar j}} (A)(B_{i\bar j} -A_{i\bar j})\geq F(B)-F(A).
    F(A+B)>F(A) 
    \end{aligned}
    \end{equation}
     for any $A$, $B$ satisfying $\lambda(A)$, $\lambda(B)\in\Gamma$.

   %  \subsection{Lemma \ref{yuan's-quantitative-lemma}}
     \subsection{A quantitative lemma}

     \begin{lemma}
     [\cite{yuan2017}]
%\label{refinement2}
\label{yuan's-quantitative-lemma}
Let $A$ be an $n\times n$ Hermitian matrix
\begin{equation}\label{matrix3}\left(\begin{matrix}
d_1&&  &&a_{1}\\ &d_2&& &a_2\\&&\ddots&&\vdots \\ && &  d_{n-1}& a_{n-1}\\
\bar a_1&\bar a_2&\cdots& \bar a_{n-1}& \mathrm{{\bf a}} %\nonumber
\end{matrix}\right)\end{equation}
with $d_1,\cdots, d_{n-1}, a_1,\cdots, a_{n-1}$ fixed, and with $\mathrm{{\bf a}}$ variable.
Denote $\lambda=(\lambda_1,\cdots, \lambda_n)$ by   the eigenvalues of $A$.
%With the same notation in Lemma \ref{refinement3}.
Let $\epsilon>0$ be a fixed constant.
Suppose that  the parameter $\mathrm{{\bf a}}$ in $A$ satisfies  the quadratic
 growth condition  
  \begin{equation}
 \begin{aligned}
\label{guanjian1-yuan}
\mathrm{{\bf a}}\geq \frac{2n-3}{\epsilon}\sum_{i=1}^{n-1}|a_i|^2 +(n-1)\sum_{i=1}^{n-1} |d_i|+ \frac{(n-2)\epsilon}{2n-3},
\end{aligned}
\end{equation}
where  $\epsilon$ is a positive constant.
% \begin{equation}
%a \geq \frac{2}{\epsilon}\sum_{i=1}^{n-1}|a_i|^2 +(n-1)\sum_{i=1}^{n-1} |d_i|+n \epsilon. \nonumber
% \end{equation}
 Then the eigenvalues (possibly with an order) behavior like
\begin{equation}
\begin{aligned}
\,& |d_{\alpha}-\lambda_{\alpha}|
<   \epsilon, \forall 1\leq \alpha\leq n-1,\\
\,&0\leq \lambda_{n}-\mathrm{{\bf a}}
< (n-1)\epsilon. \nonumber
\end{aligned}
\end{equation}
\end{lemma}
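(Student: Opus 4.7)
The plan is to analyze the eigenvalues of $A$ via the \emph{secular equation} arising from the bordered structure. I would begin by applying the Schur complement to $A - \lambda I$ for $\lambda \notin \{d_1, \ldots, d_{n-1}\}$, which factors $\det(A - \lambda I)$ as $\prod_i (d_i - \lambda)$ times
\[
g(\lambda) := (\mathrm{{\bf a}} - \lambda) + \sum_{i=1}^{n-1} \frac{|a_i|^2}{\lambda - d_i};
\]
so eigenvalues of $A$ (outside $\{d_i\}$) are roots of $g$. Since $g'(\lambda) = -1 - \sum |a_i|^2/(\lambda - d_i)^2 < 0$, the function $g$ is strictly decreasing on each connected component of $\mathbb{R} \setminus \{d_i\}$ and runs from $+\infty$ to $-\infty$ on each bounded such interval (after sorting $d_1 \leq \cdots \leq d_{n-1}$). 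This yields exactly one eigenvalue $\lambda_\alpha$ in each $(d_{\alpha-1}, d_\alpha)$ for $\alpha = 1, \ldots, n-1$ (with convention $d_0 := -\infty$), together with one ``large'' eigenvalue $\lambda_n \in (d_{n-1}, \infty)$; this is consistent with Cauchy's interlacing theorem applied to the diagonal principal submatrix $\mathrm{diag}(d_1, \ldots, d_{n-1})$, which moreover gives $\lambda_\alpha \leq d_\alpha$ at once.

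Next I would sandwich $\lambda_n$. The growth hypothesis \eqref{guanjian1-yuan} forces $\mathrm{{\bf a}} > \sum_i |d_i|$, so $g(\mathrm{{\bf a}}) = \sum |a_i|^2/(\mathrm{{\bf a}} - d_i) > 0$ and hence $\lambda_n > \mathrm{{\bf a}}$. For the upper bound, the same hypothesis yields $\mathrm{{\bf a}} - \sum |d_j| \geq \tfrac{2n-3}{\epsilon} \sum |a_k|^2$, from which
\[
\sum_i \frac{|a_i|^2}{\mathrm{{\bf a}} + (n-1)\epsilon - d_i} \leq \frac{\epsilon}{2n-3} < (n-1)\epsilon,
\]
so $g(\mathrm{{\bf a}} + (n-1)\epsilon) < 0$ and $\lambda_n < \mathrm{{\bf a}} + (n-1)\epsilon$.

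For the remaining eigenvalues it remains to upgrade $\lambda_\alpha \leq d_\alpha$ to $\lambda_\alpha > d_\alpha - \epsilon$. I would split into two cases. If $d_\alpha - d_{\alpha-1} \leq \epsilon$, then the interval $(d_{\alpha-1}, d_\alpha)$ has length at most $\epsilon$ and already traps $\lambda_\alpha$ within $\epsilon$ of $d_\alpha$. Otherwise $d_\alpha - \epsilon$ lies strictly inside $(d_{\alpha-1}, d_\alpha)$, and monotonicity of $g$ reduces the task to verifying $g(d_\alpha - \epsilon) > 0$. The expansion
\[
g(d_\alpha - \epsilon) = \mathrm{{\bf a}} - d_\alpha + \epsilon - \frac{|a_\alpha|^2}{\epsilon} + \sum_{i < \alpha} \frac{|a_i|^2}{d_\alpha - \epsilon - d_i} - \sum_{i > \alpha} \frac{|a_i|^2}{d_i - d_\alpha + \epsilon}
\]
has a favourable sign on the $i < \alpha$ terms, while each $i > \alpha$ term is bounded in absolute value by $|a_i|^2/\epsilon$ (using $d_i - d_\alpha + \epsilon \geq \epsilon$). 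The growth hypothesis then absorbs the singular $|a_\alpha|^2/\epsilon$ term together with these harmful contributions and produces $g(d_\alpha - \epsilon) > 0$.

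The main obstacle is the clustering of the $d_i$'s in this last step: when $d_{\alpha-1}$ is close to $d_\alpha$, the denominator $d_\alpha - \epsilon - d_{\alpha-1}$ nearly vanishes and the secular-equation estimate breaks down. The resolution, built into the two-case split, is that this is precisely the regime in which Cauchy interlacing already confines $\lambda_\alpha$ to a short interval, so the singular behaviour of the secular equation is never actually invoked. Choosing constants this way also explains the curious coefficient $(2n-3)/\epsilon$ in the hypothesis: it is exactly what is needed to simultaneously dominate the $|a_\alpha|^2/\epsilon$ singularity at each $\alpha$ and to drive $g(\mathrm{{\bf a}} + (n-1)\epsilon)$ below zero.
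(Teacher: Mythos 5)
Your proof is correct but takes a genuinely different route from the paper's. You work directly with the secular equation $g(\lambda) = (\mathbf{a}-\lambda)+\sum_{i}|a_i|^2/(\lambda-d_i)$ obtained from the Schur complement of the bordered matrix, exploit its monotonicity on each interval between the $d_i$'s, sandwich $\lambda_n$ by evaluating $g$ at $\mathbf{a}$ and $\mathbf{a}+(n-1)\epsilon$, and pin down the remaining eigenvalues by combining Cauchy interlacing (which handles clustered $d_i$'s cheaply) with the sign $g(d_\alpha-\epsilon)>0$ when $d_\alpha-d_{\alpha-1}>\epsilon$. The paper instead proves an auxiliary ``refinement'' lemma (Lemma~\ref{refinement}) by a trace-comparison contradiction, showing only that each $\lambda_\alpha$, $\alpha<n$, is $\epsilon$-close to \emph{some} $d_{i_\alpha}$, and then upgrades this to the precise matching $\lambda_\alpha\leftrightarrow d_\alpha$ and the $(n-1)\epsilon$ bound on $\lambda_n-\mathbf{a}$ via a continuity-of-eigenvalue-counting deformation in the parameter $\mathbf{a}$, following the strategy of \cite[Lemma~1.2]{CNS3}. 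Your route is shorter and more self-contained, bypassing both the refinement lemma and the homotopy argument; its price is that you must track the degenerate cases explicitly (some $a_i=0$, in which $d_i$ is an exact eigenvalue and $g$ does not blow up there; repeated $d_i$'s, in which the open interval $(d_{\alpha-1},d_\alpha)$ is empty), and you should note that the clustered-$d_i$ branch of your case split only yields $|\lambda_\alpha-d_\alpha|\le d_\alpha-d_{\alpha-1}\le\epsilon$ rather than the strict inequality stated; this is harmless for the applications but worth flagging. Conversely, the paper's approach is insensitive to these degeneracies since the counting function is defined on closed neighborhoods and the trace argument never divides by $\lambda-d_i$.
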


%\noindent {\bf Proof of Lemma \ref{yuan's-quantitative-lemma}}

\vspace{2mm}
For convenience we will give the proof of Lemma \ref{yuan's-quantitative-lemma}. %building on \cite{yuan2017}.
We start with the case of $n=2$. In this case,
%  we   propose a quantitative version of Lemma \ref{lemmaCNS3}.
we prove that  if $\mathrm{{\bf a}} \geq \frac{|a_1|^2}{ \epsilon}+ d_1$
%\begin{equation}
%\label{guanjian1-n2}
%\begin{aligned}
%\mathrm{{\bf a}} \geq \frac{|a_1|^2}{ \epsilon}+ d_1
%\end{aligned}
%\end{equation}
%%$$a>\frac{|a_1|^2}{ \epsilon}+d_1-\epsilon $$
 then
$$0\leq d_1- \lambda_1=\lambda_2-\mathrm{{\bf a}} <\epsilon.$$

Let's briefly  present the discussion as follows:
For $n=2$, the eigenvalues of $\mathrm{A}$ are
 $\lambda_{1}=\frac{\mathrm{{\bf a}}+d_1- \sqrt{(\mathrm{{\bf a}}-d_1)^2+4|a_1|^2}}{2}$
 and $\lambda_2=\frac{\mathrm{{\bf a}}+d_1+\sqrt{(\mathrm{{\bf a}}-d_1)^2+4|a_1|^2}}{2}$.
We can assume $a_1\neq 0$; otherwise we are done.
If $\mathrm{{\bf a}} \geq \frac{|a_1|^2}{ \epsilon}+ d_1$ then one has
\begin{equation}
\begin{aligned}
0\leq d_1- \lambda_1 =\lambda_2-\mathrm{{\bf a}}
= \frac{2|a_1|^2}{\sqrt{ (\mathrm{{\bf a}}-d_1)^2+4|a_1|^2 } +(\mathrm{{\bf a}}-d_1)}
< \frac{|a_1|^2}{\mathrm{{\bf a}}-d_1 } \leq \epsilon.   \nonumber
\end{aligned}
\end{equation}
Here we use $a_1\neq 0$ to verify that the strictly inequality in the above formula holds.
We hence obtain Lemma \ref{yuan's-quantitative-lemma}  for $n=2$.

The following lemma enables us  to count  the eigenvalues near the diagonal elements
via a deformation argument.
It is an essential  ingredient in the proof of  Lemma \ref{yuan's-quantitative-lemma}   for general $n$.
\begin{lemma}
[\cite{yuan2017}]
\label{refinement}
Let $\mathrm{A}$ be an $n\times n$  Hermitian matrix
\begin{equation}
\label{matrix2}
\left(
\begin{matrix}
d_1&&  &&a_{1}\\
&d_2&& &a_2\\
&&\ddots&&\vdots \\
&& &  d_{n-1}& a_{n-1}\\
\bar a_1&\bar a_2&\cdots& \bar a_{n-1}& \mathrm{{\bf a}} \nonumber
\end{matrix}
\right)
\end{equation}
with $d_1,\cdots, d_{n-1}, a_1,\cdots, a_{n-1}$ fixed, and with $\mathrm{{\bf a}}$ variable.
Denote
$\lambda_1,\cdots, \lambda_n$ by the eigenvalues of $\mathrm{A}$ with the order
$\lambda_1\leq \lambda_2 \leq\cdots \leq \lambda_n$.
Fix   a positive constant $\epsilon$.
Suppose that the parameter $\mathrm{{\bf a}}$ in the matrix $\mathrm{A}$ 
satisfies  the following quadratic growth condition
%$$a>\frac{2}{\epsilon}\sum_{i=1}^{n-1} a_i^2+ (n-1)\sum_{i=1}^{n-1} |d_i|+ (n-2)\epsilon.$$
\begin{equation}
\label{guanjian2}
\begin{aligned}
\mathrm{{\bf a}} \geq \frac{1}{\epsilon}\sum_{i=1}^{n-1} |a_i|^2+\sum_{i=1}^{n-1}  [d_i+ (n-2) |d_i|]+ (n-2)\epsilon.
\end{aligned}
\end{equation}
Then for   any $\lambda_{\alpha}$ $(1\leq \alpha\leq n-1)$ there exists an  $d_{i_{\alpha}}$
with lower index $1\leq i_{\alpha}\leq n-1$ such that
\begin{equation}
\label{meishi}
\begin{aligned}
 |\lambda_{\alpha}-d_{i_{\alpha}}|<\epsilon,
\end{aligned}
\end{equation}
\begin{equation}
\label{mei-23-shi}
0\leq \lambda_{n}-\mathrm{{\bf a}} <(n-1)\epsilon + |\sum_{\alpha=1}^{n-1}(d_{\alpha}-d_{i_{\alpha}})|.
\end{equation}
\end{lemma}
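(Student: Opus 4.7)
My plan is to combine the secular equation for the eigenvalues of the rank-two perturbation with a Rayleigh-quotient lower bound and the trace identity, splitting the work according to whether an eigenvalue is close to some $d_i$ or not.

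First I would set up the two main tools. The Rayleigh quotient at $e_n$ gives $\langle Ae_n,e_n\rangle=\mathbf{a}$, so $\lambda_n\geq\mathbf{a}$, which is already the left half of \eqref{mei-23-shi}. Expanding $\det(\lambda I-A)$ along the last row/column yields the characteristic polynomial
\begin{equation*}
p(\lambda)=(\lambda-\mathbf{a})\prod_{i=1}^{n-1}(\lambda-d_i)-\sum_{i=1}^{n-1}|a_i|^2\prod_{j\neq i}(\lambda-d_j),
\end{equation*}
so every eigenvalue $\lambda$ with $\lambda\neq d_i$ for all $i$ satisfies the secular equation
\begin{equation*}
\lambda-\mathbf{a}=\sum_{i=1}^{n-1}\frac{|a_i|^2}{\lambda-d_i}.
\end{equation*}
Degenerate cases where $\lambda$ coincides with some $d_{i_0}$ will be dealt with by perturbing $d_{i_0}$ slightly and passing to the limit, using continuity of eigenvalues.

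The key step is to show that any eigenvalue $\lambda$ with $|\lambda-d_i|\geq\epsilon$ for all $i=1,\ldots,n-1$ must actually be the top eigenvalue $\lambda_n$. From the secular equation and the separation hypothesis,
\begin{equation*}
|\lambda-\mathbf{a}|\leq \frac{1}{\epsilon}\sum_{i=1}^{n-1}|a_i|^2,
\end{equation*}
and plugging in the growth hypothesis \eqref{guanjian2} gives $\lambda\geq\mathbf{a}-\frac{1}{\epsilon}\sum|a_i|^2\geq \sum_i[d_i+(n-2)|d_i|]+(n-2)\epsilon$. Because each summand $d_j+(n-2)|d_j|$ is nonnegative for $n\geq 2$, this bound dominates $d_j+(n-2)\epsilon$ for every $j$; combined with the $\epsilon$-separation from every $d_j$, it forces $\lambda$ into the semi-infinite interval above every $d_j$, so $\lambda=\lambda_n$. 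By contraposition, for each $\alpha\in\{1,\dots,n-1\}$ there must exist an index $i_\alpha$ with $|\lambda_\alpha-d_{i_\alpha}|<\epsilon$, giving \eqref{meishi}.

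The bound \eqref{mei-23-shi} on $\lambda_n-\mathbf{a}$ then follows from the trace identity $\mathbf{a}+\sum_{i=1}^{n-1}d_i=\sum_{\alpha=1}^n\lambda_\alpha$, which rearranges to
\begin{equation*}
\lambda_n-\mathbf{a}=\sum_{\alpha=1}^{n-1}(d_\alpha-d_{i_\alpha})-\sum_{\alpha=1}^{n-1}(\lambda_\alpha-d_{i_\alpha}),
\end{equation*}
and the second sum is bounded in modulus by $(n-1)\epsilon$ by the previous step.

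The main obstacle I expect is the degenerate case where an eigenvalue coincides with some $d_{i_0}$, making the secular equation singular at that point: handling this rigorously requires a perturbation/limit argument, exploiting the fact that eigenvalues depend continuously on the matrix entries while both the hypothesis \eqref{guanjian2} and the conclusions are stable under small perturbations of $d_{i_0}$. A secondary subtlety is the borderline case $n=2$, in which $\sum[d_i+(n-2)|d_i|]+(n-2)\epsilon$ reduces to $d_1$ and does not directly produce the $+\epsilon$ gap; however this case is already immediate from the explicit quadratic formula used in the $n=2$ discussion of Lemma \ref{yuan's-quantitative-lemma}, so it can simply serve as the induction base.
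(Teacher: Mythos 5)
Your proposal follows a genuinely different route from the paper's. The paper partitions $\{1,\dots,n-1\}$ into ``bad'' indices (those $\alpha$ with $|\lambda_\alpha-d_i|\geq\epsilon$ for all $i$) and ``good'' indices, bounds $\lambda_\alpha$ below on the bad set via the secular equation, bounds $|\lambda_\alpha|$ on the good set, and then runs a trace-counting contradiction: if the bad set were nonempty, summing the eigenvalues would strictly exceed $\operatorname{tr}(\mathrm{A})=\mathbf{a}+\sum d_i$. That argument is completely self-contained. Your version instead argues directly that a single eigenvalue far from every $d_i$ must be $\lambda_n$ and takes the contrapositive, which is shorter and more conceptual, and your derivation of \eqref{mei-23-shi} from the trace identity is exactly the paper's.

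There is, however, one genuine gap in the key step. You deduce that a separated eigenvalue $\lambda$ satisfies $\lambda>d_j$ for every $j$ and then write ``so $\lambda=\lambda_n$,'' but being above every $d_j$ does not, by itself, identify $\lambda$ as the top eigenvalue — a priori two eigenvalues could lie above $\max_j d_j$, and the ordering convention alone does not rule that out. You need one of two standard inputs here: either Cauchy interlacing with the leading $(n-1)\times(n-1)$ principal submatrix (which gives $\lambda_\alpha\leq d_{(n-1)}=\max_j d_j$ for $\alpha\leq n-1$, an immediate contradiction), or the observation that the secular function $g(\lambda)=\lambda-\mathbf{a}-\sum_i |a_i|^2/(\lambda-d_i)$ is strictly increasing on $(\max_j d_j,\infty)$ and so has at most one root there, which must be $\lambda_n$ since $\lambda_n\geq\mathbf{a}>\max_j d_j$ under \eqref{guanjian2}. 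Either closes the gap cleanly. As a minor remark, the ``degenerate'' case $\lambda_\alpha=d_{i_0}$ needs no perturbation argument: it already gives $|\lambda_\alpha-d_{i_0}|=0<\epsilon$ and \eqref{meishi} holds trivially, while the separated eigenvalues you care about are by hypothesis bounded away from every $d_i$, so the secular equation is valid for them without qualification.
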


\begin{proof}
%[Proof of Lemma \ref{refinement}]
Without loss of generality, we assume $\sum_{i=1}^{n-1} |a_i|^2>0$ and  $n\geq 3$
(otherwise we are done, since $\mathrm{A}$ is diagonal or $n=2$).
%If $\sum_{i=1}^{n-1} |a_i|^2=0$, then $A$ is diagonal, and  we are done.
%Suppose now $\sum_{i=1}^{n-1} |a_i|^2>0$.
Note that in the assumption of the lemma the eigenvalues have
the order $\lambda_1\leq \lambda_2\leq \cdots \leq \lambda_n$.
It is  well known that, for a Hermitian matrix,
 any diagonal element is   less than or equals to   the  largest eigenvalue.
 In particular,
 \begin{equation}
 \label{largest-eigen1}
 \lambda_n \geq \mathrm{{\bf a}}.
 \end{equation}

We only need to prove   \eqref {meishi}, since  \eqref{mei-23-shi} is a consequence of  \eqref{meishi}, \eqref{largest-eigen1}  and
\begin{equation}
\label{trace}
 \sum_{i=1}^{n}\lambda_i=\mbox{tr}(\mathrm{A})=\sum_{\alpha=1}^{n-1} d_{\alpha}+\mathrm{{\bf a}}.
 \end{equation}

 Let's denote   $I=\{1,2,\cdots, n-1\}$. We divide the index set   $I$ into two subsets  by
$${\bf B}=\{\alpha\in I: |\lambda_{\alpha}-d_{i}|\geq \epsilon, \mbox{   }\forall i\in I\} $$
and $ {\bf G}=I\setminus {\bf B}=\{\alpha\in I: \mbox{There exists an $i\in I$ such that }
 |\lambda_{\alpha}-d_{i}| <\epsilon\}.$

To complete the proof we need to prove ${\bf G}=I$ or equivalently ${\bf B}=\emptyset$.
  It is easy to see that  for any $\alpha\in {\bf G}$, one has
   \begin{equation}
   \label{yuan-lemma-proof1}
   \begin{aligned}
   |\lambda_\alpha|< \sum_{i=1}^{n-1}|d_i| + \epsilon.
   \end{aligned}
   \end{equation}

   Fix $ \alpha\in {\bf B}$,  we are going to give the estimate for $\lambda_\alpha$.
%Let  $\lambda$ be an eigenvalue of $A$,  then
%Since   $\lambda_{\alpha}$ is an eigenvalue of $A$,  one has
The eigenvalue $\lambda_\alpha$ satisfies
\begin{equation}
\label{characteristicpolynomial}
\begin{aligned}
(\lambda_{\alpha} -\mathrm{{\bf a}})\prod_{i=1}^{n-1} (\lambda_{\alpha}-d_i)
= \sum_{i=1}^{n-1} (|a_{i}|^2 \prod_{j\neq i} (\lambda_{\alpha}-d_{j})).
\end{aligned}
\end{equation}
By the definition of ${\bf B}$, for  $\alpha\in {\bf B}$, one then has $|\lambda_{\alpha}-d_i|\geq \epsilon$ for any $i\in I$.
We therefore derive
\begin{equation}
\begin{aligned}
|\lambda_{\alpha}-\mathrm{{\bf a}} |=  \left|\sum_{i=1}^{n-1} \frac{|a_i|^2}{\lambda_{\alpha}-d_{i}}\right|\leq\sum_{i=1}^{n-1} \frac{|a_i|^2}{|\lambda_{\alpha}-d_{i}|}\leq
\frac{1}{\epsilon}\sum_{i=1}^{n-1} |a_i|^2, \mbox{ if } \alpha\in {\bf B}.
\end{aligned}
\end{equation}
%Here we use $\sum_{i=1}^{n-1} |a_i|^2>0$.
Hence,  for $\alpha\in {\bf B}$, we obtain
\begin{equation}
\label{yuan-lemma-proof2}
\begin{aligned}
 \lambda_\alpha \geq \mathrm{{\bf a}}-\frac{1}{\epsilon}\sum_{i=1}^{n-1} |a_i|^2.
\end{aligned}
\end{equation}
%Furthermore, $\lambda_n \geq \mathrm{{\bf a}}-\frac{1}{\epsilon}\sum_{i=1}^{n-1} |a_i|^2$,
%as one assumes   $\lambda_n\geq \lambda_\alpha$ in the lemma.

For a set ${\bf S}$, we denote $|{\bf S}|$ the  cardinality of ${\bf S}$.
We shall use proof by contradiction to prove  ${\bf B}=\emptyset$.
Assume ${\bf B}\neq \emptyset$.
Then $|{\bf B}|\geq 1$, and so $|{\bf G}|=n-1-|{\bf B}|\leq n-2$. % and we will prove that it is a contradiction.

In the case of ${\bf G}\neq \emptyset$, we compute the trace of the matrix $A$ as follows:
\begin{equation}
\begin{aligned}
\mbox{tr}(\mathrm{A})=\,&
\lambda_n+
\sum_{\alpha\in {\bf B}}\lambda_{\alpha} + \sum_{\alpha\in  {\bf G}}\lambda_{\alpha}\\
> \,&
\lambda_n+
|{\bf B}| (\mathrm{{\bf a}}-\frac{1}{\epsilon}\sum_{i=1}^{n-1} |a_i|^2 )-|{\bf G}| (\sum_{i=1}^{n-1}|d_i|+\epsilon ) \\
\geq \,&
 2\mathrm{{\bf a}}-\frac{1}{\epsilon}\sum_{i=1}^{n-1} |a_i|^2 -(n-2) (\sum_{i=1}^{n-1}|d_i|+\epsilon )
\\
%\geq\,& \sum_{i=1}^{n-1} |d_i|+\mathrm{{\bf a}} \\
\geq \,& \sum_{i=1}^{n-1}d_i +\mathrm{{\bf a}}= \mbox{tr}(\mathrm{A}),
% \mbox{ provided that \eqref{guanjian1} holds,}
\end{aligned}
\end{equation}
where we use  \eqref{guanjian2},   \eqref{largest-eigen1}, \eqref{yuan-lemma-proof1} and \eqref{yuan-lemma-proof2}.
This is a contradiction.

In the case of ${\bf G}=\emptyset$, one knows that
\begin{equation}
\begin{aligned}
\mbox{tr}(\mathrm{A})
%=\,&\lambda_n+\sum_{\alpha\in  {\bf G}}\lambda_{\alpha}+\sum_{\alpha\in {\bf B}}\lambda_{\alpha}\\
%> \,&
%(|{\bf B}|+1) (\mathrm{{\bf a}}-\frac{1}{\epsilon}\sum_{i=1}^{n-1} |a_i|^2 )-|{\bf G}| (\sum_{i=1}^{n-1}|d_i|+\epsilon )\\
\geq
\mathrm{{\bf a}}+
(n-1) (\mathrm{{\bf a}}-\frac{1}{\epsilon}\sum_{i=1}^{n-1} |a_i|^2 )
 >   \sum_{i=1}^{n-1}d_i +\mathrm{{\bf a}}= \mbox{tr}(\mathrm{A}).
% \mbox{ provided that \eqref{guanjian1} holds,}
\end{aligned}
\end{equation}
Again, it is a contradiction.

We now prove ${\bf B}=\emptyset$.
Therefore,   ${\bf G}=I$ and  the proof is complete.
\end{proof}

We apply Lemma \ref{refinement} to prove Lemma \ref{yuan's-quantitative-lemma} via a deformation argument.

\begin{proof}
[Proof of Lemma \ref{yuan's-quantitative-lemma}]
%The proof is based on Lemma  \ref{refinement} and a deformation argument.
Without loss of generality,  we assume $n\geq 3$ and  $\sum_{i=1}^{n-1} |a_i|^2>0$
 (otherwise  $n=2$ or the matrix $\mathrm{A}$ is diagonal, and then we are done).
Fix $a_1, \cdots, a_{n-1}$,
$d_1, \cdots, d_{n-1}$. 
Denote $\lambda_1(\mathrm{{\bf a}}), \cdots, \lambda_n(\mathrm{{\bf a}})$ by
 the eigenvalues of $\mathrm{A}$ with
  the order  $\lambda_1(\mathrm{{\bf a}})\leq \cdots\leq \lambda_n(\mathrm{{\bf a}})$. 
  Clearly,  the eigenvalues $\lambda_i(\mathrm{{\bf a}})$ are all continuous functions 
  in $\mathrm{{\bf a}}$.
  For simplicity, we write $\lambda_i=\lambda_i(\mathrm{{\bf a}})$. 
%Without loss of generality, we may assume
% $$n\geq 3, \mbox{ } \sum_{i=1}^{n-1} |a_i|^2>0,  \mbox{ }  d_1\leq d_2\leq \cdots \leq d_{n-1}  \mbox{ and } \lambda_1\leq \lambda_2 \leq \cdots \lambda_{n-1}\leq \lambda_n.$$
%With the notation in the proof of Lemma \ref{refinement111}.

%With the same notation in the proof of Lemma \ref{refinement111}.
Fix $\epsilon>0$.  Let $I'_\alpha=(d_\alpha-\frac{\epsilon}{2n-3}, d_\alpha+\frac{\epsilon}{2n-3})$ and
$$P_0'=\frac{2n-3}{\epsilon}\sum_{i=1}^{n-1} |a_i|^2+ (n-1)\sum_{i=1}^{n-1} |d_i|+ \frac{(n-2)\epsilon}{2n-3}.$$
  In what follows we assume  $\mathrm{{\bf a}}\geq P_0'$ (i.e. \eqref{guanjian1-yuan} holds).
The connected components of $\bigcup_{\alpha=1}^{n-1} I_{\alpha}'$ are as in the following:
$$J_{1}=\bigcup_{\alpha=1}^{j_1} I_\alpha',
J_2=\bigcup_{\alpha=j_1+1}^{j_2} I_\alpha'  \cdots, J_i =\bigcup_{\alpha=j_{i-1}+1}^{j_i} I_\alpha', \cdots, 
 J_{m} =\bigcup_{\alpha=j_{m-1}+1}^{n-1} I_\alpha'.$$
 (Here we denote $j_0=0$ and $j_m=n-1$).
 Moreover
   \begin{equation}
   \begin{aligned}
J_i\bigcap J_k=\emptyset, \mbox{ for }   1\leq i<k\leq m. \nonumber
\end{aligned}
\end{equation}
%It  plays formally the role of \eqref{daqin122} in the proof of Lemma \ref{refinement111}.

% As in the proof of Lemma \ref{refinement111},
% we define the function with the variable $\mathrm{{\bf a}}$ as follows:
  %$$ \mathrm{{\bf \widetilde{Card}}}_k:=|\{1\leq i\leq n: \lambda_i \in J_k\}|:[P_0',+\infty)\rightarrow \mathbb{N}.$$
Let  $$ \mathrm{{\bf \widetilde{Card}}}_k:[P_0',+\infty)\rightarrow \mathbb{N}$$
be the function that counts the eigenvalues which lie in $J_k$.
   (Note that when the eigenvalues are not distinct,  the function $\mathrm{{\bf \widetilde{Card}}}_k$ denotes  the summation of all the multiplicities of  distinct eigenvalues which
 lie in $J_k$).
  This function measures the number of the  eigenvalues which lie in $J_k$.
  
  The crucial ingredient is that  Lemma \ref{refinement}  yields the continuity of   $\mathrm{{\bf \widetilde{Card}}}_i(\mathrm{{\bf a}})$ for $\mathrm{{\bf a}}\geq P_0'$. More explicitly,
by using  Lemma \ref{refinement} and  $$\lambda_n \geq {\bf a}\geq P_0'>\sum_{i=1}^{n-1}|d_i|+\frac{\epsilon}{2n-3}$$ we conclude that
 if   $\mathrm{{\bf a}}$ satisfies the quadratic growth condition \eqref{guanjian1-yuan} then
   \begin{equation}
  \label{yuan-lemma-proof5}
  \begin{aligned}
   \,& \lambda_n \in \mathbb{R}\setminus (\bigcup_{k=1}^{n-1} \overline{I_k'})
   =\mathbb{R}\setminus (\bigcup_{i=1}^m \overline{J_i}), \\
  \,& \lambda_\alpha \in \bigcup_{i=1}^{n-1} I_{i}'=\bigcup_{i=1}^m J_{i} \mbox{ for } 1\leq\alpha\leq n-1.
%  \sum_{i=1}^m  \mathrm{{\bf \widehat{Card}}}_i(\mathrm{{\bf a}})=n-1.
  \end{aligned}
  \end{equation}
%  where we also use \eqref{largest1}.
Hence,  $\mathrm{{\bf \widetilde{Card}}}_i(\mathrm{{\bf a}})$ is a continuous function
 in the variable $\mathrm{{\bf a}}$. So it is a constant.
 Together with  the line of the proof   of \cite[Lemma 1.2]{CNS3}
   we see
 that $ \mathrm{{\bf \widetilde{Card}}}_i(\mathrm{{\bf a}}) =j_i-j_{i-1}$ for sufficiently large $\mathrm{{\bf a}}$.
%It follows from    Lemma \ref{refinement3}, \eqref{yuan-lemma-proof5} and
%  the continuity of the function $ \mathrm{{\bf \widehat{Card}}}_i$ that
%  $\lim_{\mathrm{{\bf a}}\rightarrow +\infty} \mathrm{{\bf \widehat{Card}}}_i(\mathrm{{\bf a}})=j_i-j_{i-1}$, and so
%$\mathrm{Card}_i(a) \geq j_i-j_{i-1}$. Moreover, applying $\sum_{i=1}^m \mathrm{Card}_i(a)=n-1$  we have
%Combining it with  Lemma \ref{refinement3} and \eqref{yuan-lemma-proof5} we can derive
The constant of $ \mathrm{{\bf \widetilde{Card}}}_i$  therefore follows that
$$ \mathrm{{\bf \widetilde{Card}}}_i(\mathrm{{\bf a}})
%=\lim_{\mathrm{{\bf a}}\rightarrow +\infty} \mathrm{{\bf \widetilde{Card}}}_i(\mathrm{{\bf a}})
=j_i-j_{i-1}.$$
%Combining it with \eqref{yuan-lemma-proof5}, we
We thus know that the   $(j_i-j_{i-1})$ eigenvalues
$$\lambda_{j_{i-1}+1}, \lambda_{j_{i-1}+2}, \cdots, \lambda_{j_i}$$
lie in the connected component $J_{i}$.
Thus, for any $j_{i-1}+1\leq \gamma \leq j_i$,  we have $I_\gamma'\subset J_i$ and  $\lambda_\gamma$
   lies in the connected component $J_{i}$.
Therefore,
$$|\lambda_\gamma-d_\gamma| < \frac{(2(j_i-j_{i-1})-1) \epsilon}{2n-3}\leq \epsilon.$$
Here we also use the fact that $d_\gamma$ is midpoint of  $I_\gamma'$ and 
every $J_i\subset \mathbb{R}$ is an open subset.

To be brief,  if for fixed index $1\leq i\leq n-1$ the eigenvalue $\lambda_i(P_0')$ lies in $J_{\alpha}$ for some $\alpha$, 
then  Lemma \ref{refinement} implies that, for any ${\bf a}>P_0'$, the corresponding eigenvalue  $\lambda_i({\bf a})$ lies in the same  interval $J_{\alpha}$.
Adapting the line of the proof   \cite[Lemma 1.2]{CNS3} to our context,
 we get the asymptotic behavior as $\mathrm{\bf a}$ goes to infinity.
 
%To be brief,  if for fixed index $1\leq i\leq n-1$ the eigenvalue $\lambda_i(P_0')$ lies in $J_{\alpha}$ for some $\alpha$, 
%then  Lemma \ref{refinement} implies that, for any ${\bf a}>P_0'$, $\lambda_i({\bf a})\in J_{\alpha}$.
%Adapting the line of the proof Lemma 1.2  of Caffarelli-Nirenberg-Spruck \cite{CNS3} to our context,
% we get the asymptotic behavior as $\mathrm{\bf a}$ goes to infinity.

\end{proof}

\end{appendix}

\end{document}